\newtheorem{prop}{Proposition}[section]
\newtheorem{thm}{Theorem}[section]
\newtheorem{cor}{Corollary}[section]
\newtheorem{lem}[thm]{Lemma}
\theoremstyle{definition}
\newtheorem{dfn}{Definition}[section]
\newtheorem{rmk}{Remark}[section]
\newtheorem{ex}{Example}[section]
\theoremstyle{remark}
\theoremstyle{remark}
\theoremstyle{remark}
\newenvironment{customthm}[1]
  {\innercustomthm}
  {\endinnercustomthm}
\def\HA{\mathbb{H}}
\def\et{\acute{e}t}
\def\cR{\mathcal R}
\def\cK{\mathcal K}
\def\sD{\mathscr D}
\def\G{\mathcal G}
\def\Ll{\mathcal L}
\def\R{\mathbb{R}}
\def\Mfd{\mathit{Mfd}}
\def\nMfd{n\mbox{-}\Mfd}
\def\Mfdt{\widetilde{\Mfd}}
\def\bl{\mspace{3mu}\cdot\mspace{3mu}}
\def\Bor{\mathfrak{Bor}\left(G \acts M\right)}
\def\cL{\mathcal L}
\def\C{\mathscr C}
\def\X{\mathscr X}
\def\Y{\mathscr Y}
\def\Z{\mathscr Z}
\def\K{\mathscr K}
\def\Fun{\mathbf{Fun}}
\def\cF{\mathcal F}
\def\G{\mathcal G}
\renewcommand\H{\mathcal H}
\def\i{\infty}
\def\n1i{\left(n+1,1\right)}
\def\Gpd{\mathbf{Gpd}}
\def\nGpd{\mathbf{Gpd}_n}
\def\iGpd{\mathbf{Gpd}_\i}
\def\LiGpd{\widehat{\mathbf{Gpd}}_\i}
\def\longlongrightarrow{-\!\!\!-\!\!\!-\!\!\!-\!\!\!-\!\!\!-\!\!\!\longrightarrow}
\def\longlonglongrightarrow{-\!\!\!-\!\!\!-\!\!\!-\!\!\!-\!\!\!-\!\!\!\longlongrightarrow}
\def\longlonglonglongrightarrow{-\!\!\!-\!\!\!-\!\!\!-\!\!\!-\!\!\!-\!\!\!\longlonglongrightarrow}
\def\longlonglonglonglongrightarrow{-\!\!\!-\!\!\!-\!\!\!-\!\!\!-\!\!\!-\!\!\!\longlonglonglongrightarrow}
\def\Top{\mathbb{T}\!\operatorname{op}}
\def\Pshi{\Psh_\i}
\def\icat{\left(\i,1\right)\mbox{-category}}
\def\icat{\mbox{$\i$-category}}
\def\colim{\underrightarrow{\operatorname{colim}\vspace{0.5pt}}\mspace{4mu}}
\renewcommand{\lim}{\varprojlim}
\newcommand{\Hom}{\operatorname{Hom}}
\DeclareMathOperator{\St}{St}
\DeclareMathOperator{\Set}{Set}
\DeclareMathOperator{\Sh}{Sh}
\DeclareMathOperator{\Shi}{Sh_\i}
\def\LShi{\widehat{\Sh}_\i}
\DeclareMathOperator{\Psh}{Psh}
\DeclareMathOperator{\Lan}{Lan}
\newcommand{\Etds}{\mathfrak{EtDiffSt}_\i}
\newcommand{\Etdsd}{\mathfrak{EtDiffSt}}
\newcommand{\Etdsn}{n\mbox{-}\mathfrak{EtDiffSt}_\i}
\newcommand{\LieGpd}{\mathbf{LieGpd}}
\newcommand{\Emb}{\operatorname{Emb}\left(\RR^n\right)}
\def\RR{\mathbb{R}}
\renewcommand{\O}{\mathcal{O}}
\newcommand{\Adj}[4]{\xymatrix@1{#2 \ar@<-0.5ex>[r]_-{#4} & #3 \ar@<-0.5ex>[l]_-{#1}}}
\newcommand{\hocolim}{\operatorname{hocolim}}
\def\rrrarrow{\hspace{.05cm}\mbox{\,\put(0,-3){$\rightarrow$}\put(0,1){$\rightarrow$}\put(0,5){$\rightarrow$}\hspace{.45cm}}}
\def\rrarrow{  \hspace{.05cm}\mbox{\,\put(0,-2){$\rightarrow$}\put(0,2){$\rightarrow$}\hspace{.45cm}}}
\def\acts{\hspace{.1cm}{\setlength{\unitlength}{.30mm}\linethickness{.09mm}
                        \begin{picture}(8,8)(0,0)\qbezier(7,6)(4.5,8.3)(2,7)\qbezier(2,7)(-1.5,4)(2,1)\qbezier(2,1)(4.5,-.3)(7,2)
                                                 \qbezier(7,6)(6.1,7.5)(6.8,9)\qbezier(7,6)(5,6.1)(4.2,4.4)
                        \end{picture}\hspace{.1cm}}}
\def\acted{\hspace{.1cm}{\setlength{\unitlength}{.30mm}\linethickness{.09mm}
                        \begin{picture}(8,8)(0,0)\qbezier(1,6)(3.5,8.3)(6,7)\qbezier(6,7)(9.5,4)(6,1)\qbezier(6,1)(3.5,-.3)(1,2)
                                                 \qbezier(1,6)(1.9,7.5)(1.2,9)\qbezier(1,6)(3,6.1)(3.8,4.4)
                        \end{picture}\hspace{.1cm}}}
\def\longlongrightarrow{-\!\!\!-\!\!\!-\!\!\!-\!\!\!-\!\!\!-\!\!\!\longrightarrow}
\def\longlonglongrightarrow{-\!\!\!-\!\!\!-\!\!\!-\!\!\!-\!\!\!-\!\!\!-\!\!\!-\!\!\!\longrightarrow}
\DeclareMathOperator{\sk}{sk}
\begin{document}
\title[\resizebox{4.5in}{!}{On The Homotopy Type of Higher Orbifolds and Haefliger Classifying Spaces}]{On The Homotopy Type of Higher Orbifolds and Haefliger Classifying Spaces}
\author{David Carchedi}

\maketitle

\begin{abstract}
We describe various equivalent ways of associating to an orbifold, or more generally a higher \'etale differentiable stack, a weak homotopy type. Some of these ways extend to arbitrary higher stacks on the site of smooth manifolds, and we show that for a differentiable stack $\X$ arising from a Lie groupoid $\G,$ the weak homotopy type of $\X$ agrees with that of $B\G.$ Using this machinery, we are able to find new presentations for the weak homotopy type of certain classifying spaces. In particular, we give a new presentation for the Borel construction $M \times_G EG$ of an almost free action of a Lie group $G$ on a smooth manifold $M$ as the classifying space of a category whose objects consists of smooth maps $\RR^n \to M$ which are transverse to all the $G$-orbits, where $n=\dim M - \dim G.$ We also prove a generalization of Segal's theorem, which presents the weak homotopy type of Haefliger's groupoid $\Gamma^q$ as the classifying space of the monoid of self-embeddings of $\RR^q,$ $B\left(\operatorname{Emb}\left(\RR^q\right)\right),$ and our generalization gives analogous presentations for the weak homotopy type of the Lie groupoids $\Gamma^{Sp}_{2q}$ and $\cR\Gamma^q$ which are related to the classification of foliations with transverse symplectic forms and transverse metrics respectively. We also give a short and simple proof of Segal's original theorem using our machinery.
\end{abstract}

\tableofcontents

\section{Introduction}

In this paper, we explain how to functorially associated to an orbifold, or more generally to a higher \'etale differentiable stack, a canonical weak homotopy type. This builds on work of Dugger and Schreiber, who address this question for arbitrary higher stacks in \cite{dugger} and \cite{dcct}, and also of Moerdijk who addresses this question in terms of \'etale Lie groupoids in \cite{Weak}. When restricting to the case of \'etale stacks, we are aided by our relatively recent categorical characterization of \'etale differentiable stacks and their higher categorical analogues \cite{prol,higherme}. Using this characterization, we are able to directly express the weak homotopy type of an $n$-dimensional higher \'etale differentiable stack as a homotopy colimit of a diagram of spaces indexed by the monoid of smooth embeddings of $\RR^n,$ $\Emb$. This is intimately linked with Segal's celebrated theorem expressing the weak homotopy type of the classifying space $B\Gamma^n$ of Haefliger's groupoid $\Gamma^n$ as the classifying space $B\left(\Emb\right)$ \cite{Segal}, and we are able to exploit this connection to find explicit presentations for the homotopy type of various examples of \'etale differentiable stacks as the classifying space of certain categories related to the geometry of the stack in question. For example, we are able to express the homotopy type of the quotient stack $M//G$ of a smooth manifold by an almost free Lie group action as the classifying space of a category whose objects consist of smooth maps $\RR^n \to M$ which are transverse to all the $G$-orbits, where $n=\dim M - \dim G.$ (Recall that an action of a Lie group is \emph{almost free} if each stabilizer group $G_x$ is discrete.) Since the weak homotopy type of $M//G$ is the Borel construction $M \times_G EG,$ this gives a new presentation for the weak homotopy type of the Borel construction.

\subsection{Higher \'Etale Differentiable Stacks}
\'Etale differentiable stacks model quotients of smooth manifolds by certain symmetries, and their points can possess intrinsic (discrete) automorphism groups. The most prevalently studied subclass of \'etale stacks is that of smooth orbifolds, however there are other important geometric examples of \'etale differentiable stacks not of this form, e.g. quotients of smooth manifolds by almost free actions of non-compact Lie groups, and leaf spaces of foliated manifolds. \'Etale differentiable stacks have been studied by various authors, c.f. \cite{Ie,Dorette,stacklie,hepworth,morsifold,Wockel,Giorgio,gerbes,etalspme}. \emph{Higher} \'etale differentiable stacks are higher categorical analogues of \'etale differentiable stacks allowing not only points to have automorphisms, but also the automorphisms of points to have automorphisms themselves, and so on. 

One common geometric presentation for \'etale differentiable stacks is the bicategory whose objects are \'etale Lie groupoids, and whose morphisms are given by groupoid principal bundles (aka Hilsum-Skandalis maps), and whose $2$-morphisms are morphisms of such bundles. More generally, higher \'etale differentiable stacks are precisely those higher stacks arising from \'etale simplicial manifolds. Recently, we have given a complete categorical characterization of higher \'etale differentiable stacks which is independent of the description in terms of simplicial manifolds (see \cite{prol} for the $2$-categorical case, and \cite{higherme} for the general case). The characterization is relatively simple. Let $\nMfd^{\et}$ denote the category of smooth $n$-manifolds and their local diffeomorphisms. There is a canonical faithful functor $$j_n:\nMfd^{\et} \to \Mfd$$ to the category of all smooth manifolds and their smooth maps, and it induces a functor $$j^*_n:\Shi\left(\Mfd\right) \to \Shi\left(\nMfd^{\et}\right)$$ between their associated $\i$-categories of higher stacks by restriction along $j_n.$ This functor has a left adjoint $$j^n_!:\Shi\left(\nMfd^{\et}\right) \to \Shi\left(\Mfd\right)$$ called the \emph{$n$-dimensional \'etale prolongation functor}. A higher stack $\X$ on the category of smooth manifolds $\Mfd$ is an $n$-dimensional higher \'etale differentiable stack if and only if it is in the essential image of $j^n_!$. (In fact, $\Shi\left(\nMfd^{\et}\right)$  is equivalent to the $\i$-category of $n$-dimensional higher \'etale differentiable stacks and their local diffeomorphisms.)

\subsection{Foliations with Transverse Structures}
In his seminal paper \cite{Haefliger}, Haefliger introduces several Lie groupoids whose classifying spaces are related to the classification of foliations with certain transverse structures. E.g., he constructs a Lie groupoid $\Gamma^{Sp}_{2q}$ whose classifying space $B\Gamma^{Sp}_{2q}$ is related to the classification of integrable homotopy classes of foliations with transverse symplectic structure, and he constructs a Lie groupoid $\cR\Gamma^q$ corresponding to the case of transverse Riemannian metrics. See Section \ref{sec:foliations} for more details (in particular Theorem \ref{thm:Segaltrans}).

In \cite{prol}, we show that for any manifold $M,$ submersions $$M \to \left[\Gamma^{Sp}_{2q}\right],$$ where $\left[\Gamma^{Sp}_{2q}\right]$ is the differentiable stack associated to $\Gamma^{Sp}_{2q},$ are in bijection with foliations of $M$ of codimension $2q$ equipped with a transverse symplectic structure, and similarly submersions $$M \to \left[\cR\Gamma^q\right]$$ are the same as foliations with transverse metrics. We also show in op. cit. that $\left[\Gamma^{Sp}_{2q}\right]=j^{2q}_!\left(\mathcal{S}_{2q}\right),$ where $\mathcal{S}_{2q}$ is the sheaf on $2q$-manifolds and their local diffeomorphisms which assigns to a $2q$-manifold its set of symplectic forms, and similarly $\left[\cR\Gamma^q\right]=j^{q}_!\left(\mathcal{R}_q\right)$ where $\mathcal{R}_q$ is the sheaf on $q$-manifolds and their local diffeomorphisms which assigns to each $q$-manifold its set of Riemannian metrics.

By Corollary \ref{cor:diffhom} of this paper, the homotopy type of $\left[\Gamma^{Sp}_{2q}\right]$ is $B\Gamma^{Sp}_{2q}$ and the homotopy type of $\left[\cR\Gamma^q\right]$ is $B\cR\Gamma^q.$ Since, the stacks $\left[\Gamma^{Sp}_{2q}\right]$ and $\left[\cR\Gamma^q\right]$ are determined by the sheaves $\mathcal{S}_{2q}$ and $\mathcal{R}_q$ respectively, there must be a natural way of expressing the homotopy type of $B\Gamma^{Sp}_{2q}$ and $B\cR\Gamma^q$ in terms of these sheaves. Spelling this out leads to generalizations of Segal's theorem, namely Theorems \ref{thm:segalsymp} and \ref{thm:segalriem} of this article.

\subsection{Organization and Main Results}
In Section \ref{sec:higher orbifolds} we review the theory of \'etale differentiable stacks and their higher categorical generalizations. In particular, we review some of the main results of Chapter 6.1 of \cite{higherme}.

In Section \ref{sec:homotopy type}, we explain various ways of associating to a higher \'etale differentiable stack a weak homotopy type. In particular, in Section \ref{sec:funigpd}, we introduce the \emph{fundamental $\i$-groupoid} functor $$\Pi_\i:\Shi\left(\Mfd\right) \to \iGpd,$$ which is a functor associating to any higher stack $\X$ on the site of smooth manifolds its weak homotopy type (regarded as an $\i$-groupoid). We then prove the following proposition, which is a strengthening of results found in \cite{dugger} and \cite{dcct}:

\begin{prop}\label{prop:pi2}
The fundamental $\i$-groupoid functor $$\Pi_\i:\Shi\left(\Mfd\right) \to \iGpd$$ sends every manifold $M$ in $\widetilde{\Mfd}$ to its underlying homotopy type, where $\widetilde{\Mfd}$ is the category of all topological spaces with a smooth atlas, i.e. smooth manifolds which need not be Hausdorff or $2^{nd}$-countable.
\end{prop}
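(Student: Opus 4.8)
The plan is to exploit two features of $\Pi_\i$ supplied by its construction: that it is a left adjoint (to the locally constant stack functor), hence preserves all colimits, and that it sends $\RR^n$ to a point. First I would observe that a possibly non-Hausdorff, non-second-countable manifold $M$ in $\Mfdt$ determines an honest ($0$-truncated) object of $\Shi\left(\Mfd\right)$ via the presheaf $N \mapsto C^\i\left(N,M\right)$ of smooth maps from genuine manifolds into $M$; this is a sheaf because smooth maps glue along open covers regardless of any separation or countability hypotheses on the target, so $\Pi_\i$ does apply to it.

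Next I would choose an open cover $\left\{U_\alpha\right\}$ of $M$ by charts $U_\alpha \cong \RR^n$, which exists since $M$ is locally Euclidean. The crucial point is that, although $M$ itself is badly behaved, every nonempty finite intersection $U_{\alpha_0 \cdots \alpha_k}$ is an open subset of the chart $U_{\alpha_0} \cong \RR^n$, hence a paracompact, second-countable, Hausdorff manifold — in particular a bona fide object of $\Mfd$. Since every smooth map from a test manifold into $M$ factors locally through some chart, the map $\coprod_\alpha U_\alpha \to M$ is an effective epimorphism in the $\i$-topos $\Shi\left(\Mfd\right)$, so $M$ is recovered as the colimit of its \v{C}ech nerve, $M \simeq \colim_{[k]} \coprod_{\alpha_0,\dots,\alpha_k} U_{\alpha_0 \cdots \alpha_k}$.

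Applying $\Pi_\i$ and commuting it past this colimit by cocontinuity gives $\Pi_\i\left(M\right) \simeq \hocolim_{[k]} \coprod \Pi_\i\left(U_{\alpha_0 \cdots \alpha_k}\right)$. Because each intersection is a nice manifold, the results of Dugger and Schreiber (\cite{dugger}, \cite{dcct}) identify $\Pi_\i\left(U_{\alpha_0 \cdots \alpha_k}\right)$ with its underlying weak homotopy type; alternatively one re-derives this directly, since an open subset of $\RR^n$ admits a good cover by geodesically convex sets, and the good-cover \v{C}ech nerve together with $\Pi_\i\left(\RR^n\right) \simeq *$ and the ordinary nerve theorem compute its homotopy type. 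Thus $\Pi_\i\left(M\right)$ is the homotopy colimit of the \v{C}ech diagram of the cover $\left\{U_\alpha\right\}$ with each piece replaced by its honest homotopy type.

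It then remains to identify this homotopy colimit with the weak homotopy type of the underlying topological space of $M$, which is exactly the assertion that, for an arbitrary open cover of an arbitrary topological space, the homotopy colimit of the \v{C}ech nerve maps by a weak equivalence to the space. I expect this last step to be the main obstacle, precisely because the classical nerve theorem is usually stated under paracompactness or second-countability, and the whole content of the strengthening is that these hypotheses can be dispensed with. The mechanism is that $\pi_n$ and singular homology are detected by maps out of compact simplices and spheres, which can always be subdivided to be subordinate to the cover, so Hausdorffness and countability of the cover become irrelevant; making this precise, or invoking the version of the theorem (due to Dugger and Isaksen) valid for all topological spaces, completes the argument. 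The only secondary point to check is that the sheaf represented by the non-Hausdorff $M$ genuinely satisfies descent along $\left\{U_\alpha\right\}$, which again reduces to the gluing of smooth maps.
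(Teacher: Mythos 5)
Your proposal is correct, but it follows a genuinely different route from the paper's own proof. You argue by descent inside the original $\i$-topos: write $M$ as the colimit of the \v{C}ech nerve of a chart cover $\left(U_\alpha \cong \RR^n\right)$, observe that all finite intersections are honest objects of $\Mfd$ (being open subsets of $\RR^n$), push the colimit through $\Pi_\i$ by cocontinuity, identify each piece with its homotopy type via Proposition \ref{prop:pi}, and then invoke the Dugger--Isaksen theorem (Theorem 1.1 of \cite{duggerisaksen}, valid for open covers of arbitrary topological spaces) to identify the resulting \v{C}ech homotopy colimit with the weak homotopy type of $M$; each step here is sound, including the effective-epimorphism claim (local surjectivity of $\coprod_\alpha \underline{U_\alpha} \to \underline{M}$ on sections) and the identification of the \v{C}ech nerve of sheaves with the sheaves represented by intersections. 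The paper instead enlarges the Grothendieck universe: it reruns the Kan-extension construction of Proposition \ref{prop:pi} over $\widetilde{\Mfd}$ to produce a colimit-preserving $\widehat{\Pi_\i}:\LShi\left(\widetilde{\Mfd}\right) \to \LiGpd$ sending every manifold in $\widetilde{\Mfd}$ to its homotopy type, notes that the inclusions $\iGpd \hookrightarrow \LiGpd$ and $\Shi\left(\widetilde{\Mfd}\right) \hookrightarrow \LShi\left(\widetilde{\Mfd}\right)$ preserve small colimits, and concludes from Theorem \ref{thm:5.1.5.6} that the two colimit-preserving functors, which agree on representables from $\Mfd$, must be equivalent. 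The two arguments rest on the same essential ingredient --- Dugger--Isaksen's \v{C}ech theorem for arbitrary spaces, which the paper absorbs into the sheaf-condition check for the right adjoint of $\widehat{\Pi_\i}$ and which you deploy at the final identification --- but they package it differently: the paper's version is purely formal, never mentions a cover of the non-Hausdorff $M$, and generalizes mechanically to other sites, whereas yours stays within the original universe, is more geometric, and makes visible exactly where the failure of Hausdorffness and second countability has to be (and is) absorbed.
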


This turns out to be an essential strengthening as there are many important examples of Lie groupoids whose arrow spaces are non-Hausdorff, e.g. Haefliger's groupoid $\Gamma^q.$ 

In Section \ref{sec:fat}, we explain the relationship between the fat geometric realization functor $$|| \mspace{3mu} \bullet \mspace{3mu}||:\Top^{\Delta^{op}} \to \Top$$ and the fundamental $\i$-groupoid functor
$$\Pi_\i:\Shi\left(\Mfd\right) \to \iGpd.$$ Namely, we prove the following theorem:

\begin{thm}
Suppose that $X_\bullet:\Delta^{op} \to \widetilde{\Mfd}$ is a simplicial manifold and let $\left[X_\bullet\right]$ be its associated higher stack. Then $\Pi_\i \left[X_\bullet\right]$ has the same weak homotopy type as $||X||.$
\end{thm}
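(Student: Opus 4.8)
The plan is to exploit the fact that $\Pi_\i$ preserves $\i$-categorical colimits together with Proposition \ref{prop:pi2}, thereby reducing the statement to a purely homotopy-theoretic identification of a homotopy colimit over $\Delta^{op}$ with a fat geometric realization.

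First I would recall that the higher stack $\left[X_\bullet\right]$ associated to the simplicial manifold $X_\bullet$ is, by construction, the $\i$-categorical colimit in $\Shi\left(\Mfd\right)$ of the simplicial diagram obtained by composing $X_\bullet$ with the Yoneda embedding, that is, $\left[X_\bullet\right] \simeq \operatorname{colim}_{\Delta^{op}} X_n$, where each $X_n$ is regarded as a representable sheaf. Since $\Pi_\i$ is a left adjoint (equivalently, it is the essentially unique colimit-preserving extension along the Yoneda embedding of the functor sending a manifold to its underlying homotopy type), it commutes with this colimit, giving
$$\Pi_\i\left[X_\bullet\right] \simeq \operatorname{colim}_{\Delta^{op}} \Pi_\i\left(X_n\right).$$
Next I would invoke Proposition \ref{prop:pi2}: because each $X_n$ lies in $\widetilde{\Mfd}$ and may well be non-Hausdorff, $\Pi_\i\left(X_n\right)$ is precisely the underlying weak homotopy type of the space $X_n$. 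This is the step where the strengthening in Proposition \ref{prop:pi2} is genuinely needed, since the constituent manifolds of a simplicial manifold (e.g. the nerve of a Lie groupoid such as $\Gamma^q$) are frequently neither Hausdorff nor second countable. Consequently $\Pi_\i\left[X_\bullet\right]$ is the $\i$-colimit over $\Delta^{op}$ of the simplicial object in $\iGpd$ whose value at $[n]$ is the homotopy type of $X_n$; equivalently, it is the homotopy colimit $\hocolim_{\Delta^{op}} X_\bullet$ of the simplicial space $X_\bullet$.

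It then remains to identify this homotopy colimit with the fat geometric realization $||X||$. Here I would use the standard fact that for an arbitrary simplicial space the fat realization is naturally weakly equivalent to $\hocolim_{\Delta^{op}} X_\bullet$. The cleanest argument is that fat realization carries levelwise weak equivalences of simplicial spaces to weak equivalences, and that it agrees with the ordinary (thin) realization, hence with the homotopy colimit, on \emph{good} (Reedy cofibrant) simplicial spaces; replacing $X_\bullet$ by a levelwise-equivalent good simplicial space therefore yields $||X|| \simeq \hocolim_{\Delta^{op}} X_\bullet$. Combining this with the previous display gives $\Pi_\i\left[X_\bullet\right] \simeq ||X||$.

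I expect the main obstacle to be exactly this last identification, and in particular the reason one is forced to use the fat rather than the thin realization: a general simplicial manifold is not good, since its degeneracies need not be cofibrations and its spaces are often non-Hausdorff, so the thin realization $|X_\bullet|$ would in general compute the wrong homotopy type, whereas the fat realization is homotopy-invariant and always models the homotopy colimit. A secondary point requiring care is the justification that $\Pi_\i$ preserves $\i$-colimits and that $\left[X_\bullet\right]$ really is the colimit of the representables; both should follow from the constructions of $\Pi_\i$ and of the associated-stack functor recalled earlier, but I would spell them out explicitly so that the interchange of $\Pi_\i$ with the colimit is fully rigorous.
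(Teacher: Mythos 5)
Your proposal is correct and follows essentially the same route as the paper: both arguments reduce the statement to the chain $\Pi_\i\left[X_\bullet\right] \simeq \colim_{\Delta^{op}}\Pi_\i\left(X_n\right) \simeq \colim_{\Delta^{op}} h\left(X_n\right) \simeq h\left(||X||\right)$, using that $\Pi_\i$ is colimit-preserving, that Proposition \ref{prop:pi2} identifies $\Pi_\i$ on possibly non-Hausdorff manifolds with the underlying homotopy type, and that the fat realization computes the homotopy colimit of an arbitrary simplicial space (the paper's Lemma \ref{lem:fathoc}, proved exactly as you sketch: degree-wise invariance of $||\cdot||$ plus agreement of fat and thin realization on a good levelwise replacement). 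The only difference is the direction in which the chain of equivalences is written, which is immaterial.
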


As an immediate corollary we get:

\begin{cor}
For $\left[\G\right]$ a differentiable stack, $\Pi_\i\left(\left[\G\right]\right)$ has the same weak homotopy type as $B\G.$
\end{cor}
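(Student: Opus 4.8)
The plan is to obtain the corollary directly from the preceding theorem by taking the simplicial manifold $X_\bullet$ to be the nerve of the Lie groupoid $\G$. First I would recall that a Lie groupoid $\G$, with object manifold $\G_0$ and arrow manifold $\G_1$, has an associated nerve $N\G_\bullet$: this is the simplicial manifold with $N\G_n = \G_1 \times_{\G_0} \cdots \times_{\G_0} \G_1$ ($n$ iterated fiber products), whose face maps are built from the source, target, and composition maps, and whose degeneracies are built from the unit map. Crucially, since we are working in $\widetilde{\Mfd}$ rather than only with Hausdorff, second countable manifolds, this nerve is a legitimate object of $\widetilde{\Mfd}^{\Delta^{op}}$ even when $\G_1$ is non-Hausdorff. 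This is exactly the flexibility supplied by Proposition \ref{prop:pi2}, and it is what makes the argument applicable to examples such as Haefliger's groupoid.

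Next I would match up the two sides of the theorem when instantiated at $X_\bullet = N\G_\bullet$. On the geometric side, the classifying space $B\G$ of a Lie groupoid is by definition the fat geometric realization $||N\G_\bullet||$ of its nerve, so the right-hand side of the theorem is already $B\G$. On the stacky side, I would invoke the standard dictionary relating a Lie groupoid to its nerve, under which the differentiable stack $[\G]$ presented by $\G$ is identified with the higher stack $[N\G_\bullet]$ associated to its nerve. Granting this, the theorem applied to $N\G_\bullet$ gives $\Pi_\i([\G]) = \Pi_\i([N\G_\bullet]) \simeq ||N\G_\bullet|| = B\G$, which is the assertion.

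The one point that genuinely needs care — and the step I would treat as the main obstacle — is the identification $[\G] \simeq [N\G_\bullet]$, i.e. that forming the differentiable stack presented by a Lie groupoid agrees with forming the higher stack associated to its nerve. I would settle this by observing that both sides compute the same colimit in $\Shi\left(\Mfd\right)$: the differentiable stack $[\G]$ is the quotient of $\G_0$ by $\G_1$, which is precisely the colimit over $\Delta^{op}$ of the simplicial diagram of representables $y\left(N\G_\bullet\right)$, and this colimit is by definition what $[N\G_\bullet]$ denotes. Once this compatibility is recorded, the corollary follows formally, with no further homotopy-theoretic input beyond the theorem itself.
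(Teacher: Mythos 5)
Your proposal is correct and follows essentially the same route as the paper: the corollary is obtained by instantiating the fat geometric realization theorem at $X_\bullet = N\left(\G\right)_\bullet$, using that $B\G := ||N\left(\G\right)||$ by definition of the classifying space, and that $\left[\G\right] \simeq \left[N\left(\G\right)_\bullet\right]$. The identification you single out as the main obstacle is precisely the content of Remark \ref{rmk:Gcolim} in the paper, which justifies it by noting that stackification preserves colimits (being a left adjoint), that colimits of $\i$-presheaves are computed objectwise, and that the nerve of a discrete groupoid is the homotopy colimit of its own nerve diagram (the diagonal fact for bisimplicial sets) — so your sketch of this step, while compressed, is the paper's own argument.
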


Along the way, we prove two useful facts about fat geometric realization:

\begin{lem}
If $f:X_\bullet \to Y_\bullet$ is a map of (semi-)simplicial spaces which is a degree-wise weak homotopy equivalence, then the induced map $$||X|| \to ||Y||$$ is a weak homotopy equivalence.
\end{lem}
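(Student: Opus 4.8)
The plan is to exploit the skeletal filtration of the fat geometric realization, argue by induction on skeleta, and then pass to the colimit. Recall that for a (semi-)simplicial space $X_\bullet$ the fat realization $||X||$ is built using only the face maps, and carries a filtration by skeleta $\sk_n||X||$, where $\sk_n||X||$ is obtained from $\sk_{n-1}||X||$ by the pushout
$$
\begin{CD}
X_n \times \partial\Delta^n @>>> \sk_{n-1}||X|| \\
@VVV @VVV \\
X_n \times \Delta^n @>>> \sk_n||X||.
\end{CD}
$$
The crucial feature of the fat realization, as opposed to the ordinary geometric realization, is that the left-hand vertical map is always a closed cofibration, since $\partial\Delta^n \hookrightarrow \Delta^n$ is one and cofibrations are stable under product with a fixed space; no hypothesis whatsoever on the degeneracies of $X_\bullet$ is needed. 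In particular each inclusion $\sk_{n-1}||X|| \hookrightarrow \sk_n||X||$ is a closed cofibration, and $||X|| = \colim_n \sk_n||X||$ is a sequential colimit along such cofibrations.

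The map $f$ induces a morphism of these filtrations, and I would show by induction on $n$ that $\sk_n||f||:\sk_n||X|| \to \sk_n||Y||$ is a weak homotopy equivalence. The base case $n=0$ is precisely the hypothesis that $f_0:X_0 \to Y_0$ is a weak equivalence. For the inductive step, consider the morphism of the two pushout squares induced by $f$. The map on the upper-right corner is $\sk_{n-1}||f||$, a weak equivalence by the inductive hypothesis, while the maps on the remaining two corners are $f_n \times \operatorname{id}_{\partial\Delta^n}$ and $f_n \times \operatorname{id}_{\Delta^n}$, which are again weak homotopy equivalences because the product of a weak homotopy equivalence with a fixed space is a weak homotopy equivalence (the homotopy groups of a product split as a product of homotopy groups). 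Since the left-hand vertical maps being pushed out are cofibrations, both squares are homotopy pushouts, and the gluing lemma for weak equivalences then yields that the induced map $\sk_n||f||$ on the pushouts is a weak equivalence.

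Finally, I would pass to the colimit. Since $||X||$ and $||Y||$ are the sequential colimits of their skeleta along closed cofibrations, any map of a sphere or disk into $||X||$ or $||Y||$ factors through a finite skeleton by compactness; equivalently, these colimits compute the corresponding homotopy colimits. As $\sk_n||f||$ is a weak equivalence for every $n$, it follows that $||f||:||X|| \to ||Y||$ induces isomorphisms on all homotopy groups and is therefore a weak homotopy equivalence. The main point requiring care, and the reason the statement holds with no cofibrancy or ``goodness'' hypothesis on $X_\bullet$ and $Y_\bullet$, is exactly the cofibration claim in the inductive step: it is the omission of the degeneracy identifications in the fat realization that forces the skeletal attaching maps to be cofibrations, hence guarantees that the gluing lemma applies. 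For the ordinary geometric realization one would instead have to assume the simplicial spaces are Reedy cofibrant, a hypothesis we are specifically avoiding here.
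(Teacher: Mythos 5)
Your proposal follows the same route as the paper's proof: filter the fat realization by skeleta, run an induction using the gluing lemma for homotopy pushouts along the closed (Hurewicz) cofibrations $X_n \times \partial\Delta^n \hookrightarrow X_n \times \Delta^n$, and then pass to the sequential colimit. The inductive step is essentially identical to the paper's, and is fine granted the standard (but nontrivial) fact that pushouts along closed Hurewicz cofibrations are homotopy pushouts with respect to \emph{weak} homotopy equivalences; the paper justifies this by passing through the Str\o{}m model structure and Lemma A.7 of \cite{duggerisaksen}.

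The genuine gap is in your passage to the colimit, and it is precisely the subtlety that this lemma was written up to address. You justify the claim that every map of a sphere or disk into $||X||$ factors through a finite skeleton ``by compactness.'' The standard compactness argument behind that assertion requires the spaces in the filtration to be $T1$: one picks a point of the image in each of infinitely many successive strata and argues that every subset of the resulting set is closed --- which uses that finite sets of points are closed --- to produce an infinite closed discrete subspace of a compact space. The lemma, however, is stated for \emph{arbitrary} (semi-)simplicial spaces, and it is applied in Lemma \ref{lem:fathoc} to a completely arbitrary simplicial space $X_\bullet$, so no separation axioms are available; the principle ``a compact subspace of a sequential colimit along closed inclusions lies in a finite stage'' is simply false in that generality. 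This is exactly what the notion of a \emph{relatively $T1$} embedding (Definition \ref{dfn:relt1}) is for: the skeletal pushouts automatically produce relatively $T1$ inclusions (Proposition \ref{prop:A2}, i.e., Lemma A.2 of \cite{duggerisaksen}), and maps from compact spaces do factor through finite stages of filtrations by such inclusions (Lemma A.3 of op.\ cit.). Your alternative phrasing, that these colimits ``compute the corresponding homotopy colimits,'' inherits the same gap, since your only justification for it is the same compactness assertion. Accordingly, your closing claim --- that the cofibration property of the attaching maps is the main point requiring care and the sole reason the statement needs no hypotheses --- identifies the wrong subtlety: that part is comparatively soft, whereas the non-$T1$ point-set issue is the one that requires an idea beyond the folklore argument, and it is absent from your proposal.
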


\begin{lem}
If $X_\bullet:\Delta^{op} \to \Top$ is any simplicial space then $||X||$ is the homotopy colimit of $X_\bullet.$
\end{lem}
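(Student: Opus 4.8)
The plan is to reduce the statement to two facts: that the fat realization is literally a homotopy colimit over the \emph{semisimplicial} indexing category $\Delta_{\mathrm{inj}}^{op}$, and that the inclusion $\Delta_{\mathrm{inj}}^{op}\hookrightarrow\Delta^{op}$ is homotopy cofinal, so that this homotopy colimit agrees with the one over all of $\Delta^{op}$. Throughout I would use that, by construction, $||X_\bullet||$ depends only on the underlying semisimplicial space and is the coend $||X_\bullet||\cong\int^{[n]\in\Delta_{\mathrm{inj}}}\Delta^n\times X_n$, where $\Delta^n$ is the topological $n$-simplex.

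First I would identify $||X_\bullet||$ with $\hocolim_{\Delta_{\mathrm{inj}}^{op}}X_\bullet$. By the Bousfield--Kan formula this homotopy colimit is the coend $\int^{[n]\in\Delta_{\mathrm{inj}}^{op}}|N\bigl((\Delta_{\mathrm{inj}}^{op})_{[n]/}\bigr)|\times X_n$, where $(\Delta_{\mathrm{inj}}^{op})_{[n]/}$ is the coslice. A direct inspection shows this coslice is the (opposite) poset of nonempty subsets of $\{0,\dots,n\}$, whose nerve realizes to the barycentric subdivision $\operatorname{sd}\Delta^n$, and the canonical homeomorphism $|\operatorname{sd}\Delta^n|\cong\Delta^n$ is natural for injections. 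Substituting turns the Bousfield--Kan coend into exactly the coend defining $||X_\bullet||$. The previous lemma — that a degreewise weak equivalence induces a weak equivalence on fat realizations — guarantees that $||\bullet||$ is a homotopical functor, which is what makes this coend a genuine model for the homotopy colimit rather than merely the strict colimit.

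It then remains to compare $\hocolim_{\Delta_{\mathrm{inj}}^{op}}$ with $\hocolim_{\Delta^{op}}$, and this is the \textbf{main obstacle}. I would show the inclusion $\iota^{op}:\Delta_{\mathrm{inj}}^{op}\hookrightarrow\Delta^{op}$ is homotopy cofinal by checking that for each $[n]$ the comma category $[n]\downarrow\iota^{op}$ has weakly contractible nerve. Its objects are arbitrary maps $\psi:[m]\to[n]$ in $\Delta$, with a morphism $\psi\to\psi'$ given by an injection $j$ satisfying $\psi j=\psi'$. Contractibility follows from the ``append a top vertex'' endofunctor $c$ sending $\psi:[m]\to[n]$ to the extension $[m+1]\to[n]$ that carries the new maximal vertex to $n\in[n]$: omitting the top vertex defines a natural transformation $c\Rightarrow\mathrm{id}$, while selecting the top vertex defines a natural transformation $c\Rightarrow\mathrm{const}$ to the constant functor at the inclusion $\{n\}\hookrightarrow[n]$. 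This zigzag of natural transformations induces a contracting homotopy on nerves, so the comma category is contractible and $\iota^{op}$ is cofinal. Combining the two steps yields $||X_\bullet||\simeq\hocolim_{\Delta_{\mathrm{inj}}^{op}}X_\bullet\simeq\hocolim_{\Delta^{op}}X_\bullet$. The only delicate points are the naturality of the subdivision identification in the first step and the cofinality in the second; both are combinatorial once set up, with the cofinality being the conceptual heart of the argument.
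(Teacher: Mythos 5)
Your proposal is correct, but it follows a genuinely different route from the paper's proof. The paper never introduces $\Delta_{\mathrm{inj}}^{op}$ or cofinality: it replaces $X_\bullet$ by the degreewise CW simplicial space $Y_\bullet=|\operatorname{Sing}(X_\bullet)|$, computes $\hocolim Y_\bullet$ with simplicial-set technology (geometric realization is left Quillen, the homotopy colimit of a bisimplicial set is its diagonal, Eilenberg--Zilber), invokes Segal's ``good simplicial spaces'' criterion --- via Lewis's result on locally equi-connected spaces --- to conclude that the comparison map $||Y||\to|Y|$ is a weak equivalence, and finally transfers the conclusion back to $X_\bullet$ along the degreewise equivalence using Lemma \ref{lem:degwise}. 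You instead identify $||X||$ with the Bousfield--Kan coend over $\Delta_{\mathrm{inj}}^{op}$ via the barycentric homeomorphisms $|\operatorname{sd}\Delta^n|\cong\Delta^n$ (correctly observing that these are natural only for injections, which is precisely why the argument applies to the fat and not the thin realization), and then prove homotopy cofinality of $\Delta_{\mathrm{inj}}^{op}\hookrightarrow\Delta^{op}$ by Quillen's Theorem A; your contracting zigzag on the comma categories is correct, and this is the standard argument for that cofinality statement. What your route buys: it makes conceptually transparent why degeneracies can be discarded, avoids goodness and Eilenberg--Zilber entirely, and establishes the semisimplicial version of the lemma along the way. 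What the paper's route buys: it stays within classical simplicial references and needs no cofinality theorem for homotopy colimits. One caveat on your first step: invariance of $||\cdot||$ under degreewise weak equivalences does not by itself make the Bousfield--Kan coend a model for the derived colimit; you also need that this coend computes $\hocolim$ for \emph{objectwise cofibrant} diagrams, and then a degreewise CW replacement such as $|\operatorname{Sing}(X_\bullet)|$ to reduce the general case to the cofibrant one --- the same reduction the paper performs. With that sentence expanded, your proof is complete.
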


The above two lemmas appear to be well-known in some crowds, however we found it a worthy exercise to write up their full proofs as there are some subtleties to deal with when the spaces involved are not $T1.$

In Section \ref{sec:monoid1}, we introduce the discrete monoid of self-embeddings of $\R^n$, $\Emb,$ and in Section \ref{sec:main} we use this monoid in an essential way to give an alternative description of the weak homotopy type $\Pi_\i\left(\X\right)$ of a higher stack $\X,$ when $\X$ is a higher \'etale differentiable stack.

Our main theorem is the following:

\begin{thm}
Suppose that $F$ in $\Shi\left(\nMfd^{\et}\right)$ is an $\i$-sheaf on $n$-manifolds and their local diffeomorphisms, and let $\X=j^n_!F$ be its associated $n$-dimensional \'etale differentiable $\i$-stack. Then the $\i$-groupoid $\Pi_\i\left(\X\right)$ can be expressed as the colimit of the following composite:

$$\Emb^{op} \to \left(\nMfd^{\et}\right)^{op} \stackrel{F}{\longrightarrow} \iGpd.$$
\end{thm}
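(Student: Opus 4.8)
The plan is to realize $\Pi_\i\circ j^n_!$ as a colimit-preserving functor and then evaluate it on the single generating object $\underline{\RR^n}$. First I would record that both functors in the composite are left adjoints: $j^n_!$ by hypothesis, and the fundamental $\i$-groupoid functor $\Pi_\i$ because it is the shape functor of the locally $\i$-connected $\i$-topos $\Shi\left(\Mfd\right)$, i.e.\ it is left adjoint to the constant-stack functor $\iGpd\to\Shi\left(\Mfd\right)$. Hence $\Pi_\i\circ j^n_!:\Shi\left(\nMfd^{\et}\right)\to\iGpd$ preserves all colimits. Next I would compute its value on the representable sheaf $\underline{\RR^n}$: since $j^n_!$ sends the representable sheaf of an $n$-manifold to its associated stack on $\Mfd$ (\cite{higherme}), we have $j^n_!\underline{\RR^n}\simeq\underline{\RR^n}$, and by Proposition \ref{prop:pi2} together with the contractibility of $\RR^n$ this gives $\Pi_\i\left(j^n_!\underline{\RR^n}\right)\simeq\Pi_\i\left(\RR^n\right)\simeq\ast$. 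Thus the composite $\Emb\xrightarrow{i}\nMfd^{\et}\to\Shi\left(\nMfd^{\et}\right)\xrightarrow{\Pi_\i\circ j^n_!}\iGpd$, where $i$ is the inclusion of the full subcategory on the single object $\RR^n$, is constant at the point.

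The heart of the argument is a \emph{canonical presentation} of an arbitrary $F$ purely in terms of $\underline{\RR^n}$. By the characterization of $n$-dimensional higher \'etale stacks in \cite{higherme}, the object $\underline{\RR^n}$ is a \emph{dense} generator of $\Shi\left(\nMfd^{\et}\right)$: every $n$-manifold is covered by charts $\RR^n\to M$ whose transition maps are self-embeddings of $\RR^n$, and descent lets one reconstruct any sheaf from its $\Emb$-set of $\RR^n$-charts. Writing $\int_{\RR^n}F$ for the category of elements of the composite $\Emb^{op}\to\left(\nMfd^{\et}\right)^{op}\xrightarrow{F}\iGpd$ (objects $s\in F\left(\RR^n\right)$, and morphisms the embeddings $\varphi\in\Emb$ with $F\left(\varphi\right)s'=s$), density yields $F\simeq\operatorname*{colim}_{\int_{\RR^n}F}\underline{\RR^n}$ in $\Shi\left(\nMfd^{\et}\right)$. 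Applying the colimit-preserving functor $\Pi_\i\circ j^n_!$ and using the previous paragraph sends every copy of $\underline{\RR^n}$ to $\ast$, so $\Pi_\i\left(\X\right)\simeq\operatorname*{colim}_{\int_{\RR^n}F}\ast$, the classifying space of $\int_{\RR^n}F$. Finally I would identify this with the colimit in the statement: for a presheaf $P$ on the one-object category $\Emb$ the colimit $\operatorname*{colim}_{\Emb^{op}}P$ is computed by the bar construction and agrees with the classifying space of its category of elements, so $\operatorname*{colim}_{\int_{\RR^n}F}\ast\simeq\operatorname*{colim}_{\Emb^{op}}\left(i^*F\right)$, which is exactly the colimit of the composite appearing in the theorem.

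I expect the main obstacle to be the density step, namely that $\underline{\RR^n}$ is not merely a colimit-generator but genuinely dense, so that the pointwise left Kan extension formula $F\simeq\operatorname*{colim}_{\int_{\RR^n}F}\underline{\RR^n}$ holds; this is precisely where the structural results of \cite{higherme} do the real work, encoding the reconstruction of an $n$-dimensional \'etale stack from its $\Emb$-set of $\RR^n$-charts together with the sheaf condition. The remaining ingredients---colimit-preservation of $\Pi_\i\circ j^n_!$ and its value on $\RR^n$---are formal. As a normalization check, taking $F$ to be the terminal sheaf $\ast$ gives $i^*F\simeq\ast$ with trivial action, so the right-hand colimit is $B\Emb$, while $\X=j^n_!\left(\ast\right)=\left[\Gamma^n\right]$ has $\Pi_\i\left(\X\right)\simeq B\Gamma^n$; the theorem therefore recovers Segal's equivalence $B\Gamma^n\simeq B\left(\Emb\right)$ \cite{Segal}, confirming that the constant $\Emb$-action and the generator $\RR^n$ are normalized correctly.
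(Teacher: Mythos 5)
Your formal skeleton is sound, and it is in fact the same strategy as the paper's: both arguments rest on (i) colimit-preservation of $\Pi_\i \circ j^n_!$, (ii) the computation that this composite sends the representable of $\RR^n$ to a point, and (iii) the reconstruction of $F$ from its restriction $\alpha^*F$ to $\Emb$. The paper packages (ii) and (iii) through the universal property of presheaves rather than through density --- it shows via Theorem \ref{thm:5.1.5.6} and Proposition \ref{prop:colimlan} that the colimit-preserving functor $\Pi_\i \circ j^n_! \circ \alpha_! \circ a : \Pshi\left(\Emb\right) \to \iGpd$ is constant at a point on the representable and hence \emph{is} the colimit functor --- but this is equivalent to your formulation, and your closing identification of the colimit of the constant point diagram over $\int \alpha^*F$ with the colimit of $\alpha^*F$ is Thomason's theorem (Corollary \ref{cor:thom2}, Theorem \ref{thm:thom3}).

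The genuine gap is the density step, which you assert rather than prove, and whose attribution to \cite{higherme} is incorrect. Your formula $F \simeq \operatorname*{colim}_{\int \alpha^*F} \underline{\RR^n}$ is precisely the statement that the counit of the composite adjunction $\left(\alpha_! \circ a\right) \dashv \alpha^*$ is an equivalence, equivalently that the restricted Yoneda functor $\Shi\left(\nMfd^{\et}\right) \to \Pshi\left(\Emb\right)$, $G \mapsto G\left(\RR^n\right)$ with its $\Emb$-action, is fully faithful. That is the full-faithfulness half of Theorem \ref{thm:embeql}, which is proved in Appendix \ref{sec:monoid} of \emph{this} paper; the results of \cite{higherme} characterize \'etale stacks via $j^n_!$ and $y^{\et}$ but say nothing about the monoid $\Emb$. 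Your chart-and-descent heuristic cannot substitute for it: it shows only that $\underline{\RR^n}$ generates $\Shi\left(\nMfd^{\et}\right)$ under colimits (as you concede), and the passage from generation to density is exactly where the difficulty sits, because the subcategory you index over is \emph{not full} --- the endomorphism monoid of $\underline{\RR^n}$ in $\Shi\left(\nMfd^{\et}\right)$ is the monoid of \emph{all} self local diffeomorphisms of $\RR^n$, strictly larger than $\Emb$ (compare Remark \ref{rmk:notsubcan}). Closing this gap requires real input: the geometric ``locally full'' verification that every local diffeomorphism $\RR^n \to \RR^n$ restricts to an embedding on each member of a cover by open sets diffeomorphic to $\RR^n$, which feeds into the Comparison Lemma of \cite{SGA}, \cite{pres} to give an equivalence of $1$-sheaf categories, and then hypercompleteness of both $\i$-topoi to lift this to $\i$-sheaves --- for $\Shi\left(\Emb\right)$ this is the homotopy-dimension bound of Lemmas \ref{lem:slicern} and \ref{lem:locfincov}, and for $\Shi\left(\nMfd^{\et}\right)$ it is quoted from \cite{higherme}. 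Until some version of this argument is supplied, your proposal is an outline whose central claim is exactly the theorem's hard content.
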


Using this theorem, we give an easy proof of the following celebrated theorem of Segal's:

\begin{thm} (Proposition 1.3 of \cite{Segal})\\
Let $\Gamma^n$ be the $n$-dimensional Haefliger groupoid of Example \ref{ex:Haefliger}. Then there is a weak homotopy equivalence between classifying spaces:
$$B\left(\Emb\right) \to B \Gamma^n.$$ 
\end{thm}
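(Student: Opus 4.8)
The plan is to derive Segal's theorem as a direct consequence of the main theorem (the colimit presentation of $\Pi_\i$) specialized to the simplest possible $\i$-sheaf on $\nMfd^{\et}$, together with the corollary identifying $\Pi_\i\left(\left[\G\right]\right)$ with $B\G$. First I would recall that Haefliger's groupoid $\Gamma^n$ represents the sheaf whose associated \'etale differentiable stack is the terminal object among $n$-dimensional \'etale stacks; concretely, $\left[\Gamma^n\right] = j^n_!\left(\ast\right)$, where $\ast$ denotes the terminal $\i$-sheaf on $\nMfd^{\et}$ (the constant sheaf with value a point). This is the crucial identification, and it should follow from the characterization of higher \'etale differentiable stacks reviewed earlier: since $\Gamma^n$ classifies all $n$-dimensional foliations / \'etale structures with no extra transverse data, the corresponding sheaf assigns to every $n$-manifold a single point.

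Granting this, I would apply the main theorem with $F = \ast$. The theorem expresses $\Pi_\i\left(\X\right)$ for $\X = j^n_! F$ as the colimit of the composite $\Emb^{op} \to \left(\nMfd^{\et}\right)^{op} \stackrel{F}{\longrightarrow} \iGpd$. When $F$ is the terminal sheaf, this composite is the constant functor at the point, so its colimit over $\Emb^{op}$ is simply the nerve (classifying $\i$-groupoid) of the one-object category $\Emb$, i.e. $B\left(\Emb\right)$ regarded as an $\i$-groupoid. Thus I obtain a natural equivalence $\Pi_\i\left(\left[\Gamma^n\right]\right) \simeq B\left(\Emb\right)$ in $\iGpd$. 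On the other hand, the corollary to the fat-realization theorem gives $\Pi_\i\left(\left[\Gamma^n\right]\right) \simeq B\Gamma^n$, since $\left[\Gamma^n\right]$ is the differentiable stack presented by the Lie groupoid $\Gamma^n$ (here one uses Proposition \ref{prop:pi2} to handle the fact that the arrow space of $\Gamma^n$ is non-Hausdorff). Comparing the two computations of the same weak homotopy type yields the desired weak homotopy equivalence $B\left(\Emb\right) \to B\Gamma^n$.

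The last step is to verify that the abstract equivalence in $\iGpd$ is induced by an actual map of spaces $B\left(\Emb\right) \to B\Gamma^n$, rather than merely an abstract isomorphism of homotopy types. For this I would trace through the construction of the colimit: the monoid $\Emb$ acts on the germ data presenting $\Gamma^n$, and there is an evident comparison functor from $\Emb$ (as a one-object category) into a category presenting $\Gamma^n$, given by sending a self-embedding $\RR^n \to \RR^n$ to its germ at the origin. Realizing this functor produces the map of classifying spaces, and the colimit computation shows it is a weak equivalence.

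I expect the main obstacle to be the identification $\left[\Gamma^n\right] = j^n_!\left(\ast\right)$, i.e. confirming that the terminal sheaf is precisely the one whose prolongation is the Haefliger stack. One must check that $\Gamma^n$ really does present the \emph{terminal} $n$-dimensional \'etale differentiable stack under the correspondence $\Shi\left(\nMfd^{\et}\right) \simeq n\mbox{-}\Etdsd$, which amounts to unwinding the definition of $\Gamma^n$ (the groupoid of germs of local diffeomorphisms of $\RR^n$) against the categorical characterization of \'etale stacks; the non-Hausdorffness of the arrow space of $\Gamma^n$ is exactly the subtlety that forces the use of the strengthened Proposition \ref{prop:pi2}. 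The remaining bookkeeping — that the colimit of a constant point-valued functor over $\C^{op}$ is $B\C$ — is formal.
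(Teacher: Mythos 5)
Your proposal follows essentially the same route as the paper's own proof. The identification $\HA_n \simeq j^n_!\left(1\right)$, which you flag as the main obstacle, is exactly Corollary \ref{cor:ex1} of the paper and is a ready-made two-line consequence of Theorems \ref{thm:6.1.6} and \ref{thm:6.1.3}; the colimit computation is Theorem \ref{thm:main} applied to $F=1$; the identification of the colimit of the constant point-valued functor on $\Emb^{op}$ with $h\left(B\left(\Emb\right)\right)$ is carried out in the paper via Thomason's theorem (Corollary \ref{cor:thom2}) together with the homeomorphism $B\C \cong B\C^{op}$, rather than asserted as formal; and the comparison $\Pi_\i\left(\HA_n\right) \simeq h\left(B\Gamma^n\right)$ is Corollary \ref{cor:diffhom}, resting on Proposition \ref{prop:pi2} to handle the non-Hausdorff arrow space, exactly as you say.

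The one place you genuinely deviate is the final step, and there your construction does not work as stated. You propose to realize the abstract equivalence by a functor from the one-object category $\Emb$ into $\Gamma^n$ sending an embedding $\varphi$ to $\mathit{germ}_0\varphi$. This assignment is not a functor: $\mathit{germ}_0\varphi$ is an arrow $0 \to \varphi\left(0\right)$ in $\Gamma^n$, and since $\varphi$ need not fix the origin, the endomorphisms of the unique object of $\Emb$ are not sent to endomorphisms of a fixed object, nor do the proposed images compose. The paper avoids any explicit construction: since $B\left(\Emb\right)$ is the geometric realization of a simplicial set it is a CW complex, hence cofibrant, and every space is fibrant in the Quillen model structure, so an isomorphism in the homotopy category out of $B\left(\Emb\right)$ is automatically realized by an actual map $B\left(\Emb\right) \to B\Gamma^n$, which is then a weak homotopy equivalence. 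Replacing your last paragraph with this standard cofibrancy argument repairs the proof; everything else you wrote matches the paper's argument.
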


Similar arguments lead to generalizations of Segal's theorem:

\begin{thm}\label{thm:segalsymp}
Let $Sp_{2n}$ denote the following category. The objects consist of symplectic forms $\omega$ on $\RR^{2n}$. An arrow $\omega \to \omega'$ between two such symplectic forms is an embedding $$\varphi:\R^{2n} \hookrightarrow \R^{2n}$$ such that $\varphi^*\omega'=\omega.$ Then there is a weak homotopy equivalence between classifying spaces:
$$B\left(Sp_{2n}\right) \to B \Gamma^{Sp}_{2n},$$ where $\Gamma^{Sp}_{2n}$ is the Lie groupoid from Example \ref{ex:sympgpd}.
\end{thm}

\begin{thm}
Let $\cR \mbox{iem}_n$ denote the following category. The objects consist of Riemannian metrics $g$ on $\RR^{n}$. An arrow $g \to g'$ between two such metrics is an embedding $$\varphi:\R^{n} \hookrightarrow \R^{n}$$ such that $\varphi^*g'=g.$ Then there is a weak homotopy equivalence between classifying spaces:
$$B\left(\cR \mbox{iem}_n\right) \to B \cR\Gamma^{n},$$ where $\cR\Gamma^{n}$ is the Lie groupoid from Example \ref{ex:riemgpd}.
\end{thm}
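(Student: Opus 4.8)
The plan is to run the argument establishing Theorem \ref{thm:segalsymp} essentially verbatim, replacing the symplectic sheaf $\mathcal{S}_{2n}$ by the sheaf of Riemannian metrics $\mathcal{R}_n$ throughout. Recall from the introduction that $\left[\cR\Gamma^n\right]=j^n_!\left(\mathcal{R}_n\right)$, where $\mathcal{R}_n\in\Shi\left(\nMfd^{\et}\right)$ is the set-valued sheaf assigning to each $n$-manifold its set of Riemannian metrics and to each local diffeomorphism the pullback of metrics. By Corollary \ref{cor:diffhom}, $\Pi_\i\left(\left[\cR\Gamma^n\right]\right)$ has the weak homotopy type of $B\cR\Gamma^n$, so it suffices to identify $\Pi_\i\left(\left[\cR\Gamma^n\right]\right)$ with $B\left(\cR\mbox{iem}_n\right)$.

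First I would apply our main theorem to $F=\mathcal{R}_n$, presenting $\Pi_\i\left(\left[\cR\Gamma^n\right]\right)=\Pi_\i\left(j^n_!\mathcal{R}_n\right)$ as the colimit in $\iGpd$ of the composite
$$G:\Emb^{op} \to \left(\nMfd^{\et}\right)^{op} \stackrel{\mathcal{R}_n}{\longrightarrow} \iGpd.$$
Unwinding the definitions, $G$ sends the unique object of $\Emb^{op}$ to the set $\mathcal{R}_n\left(\RR^n\right)$ of Riemannian metrics on $\RR^n$, and sends a self-embedding $\varphi$ to the pullback map $\varphi^*$; functoriality on $\Emb^{op}$ is exactly the identity $\left(\psi\circ\varphi\right)^*=\varphi^*\circ\psi^*$.

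The key step is to compute this homotopy colimit. Since $\mathcal{R}_n$ is valued in sets, so is $G$, and the colimit in $\iGpd$ of a set-valued diagram is modeled by the nerve of its category of elements (the Grothendieck construction) $\int_{\Emb^{op}}G$. I would then unwind this Grothendieck construction: its objects are Riemannian metrics $g$ on $\RR^n$, and a morphism $g\to g'$ is a self-embedding $\varphi$ with $\varphi^*g=g'$. This is precisely the opposite of the category $\cR\mbox{iem}_n$, whose morphisms $g\to g'$ are embeddings with $\varphi^*g'=g$. Since a category and its opposite have homeomorphic classifying spaces, we obtain
$$\Pi_\i\left(\left[\cR\Gamma^n\right]\right) \simeq B\left(\int_{\Emb^{op}}G\right) = B\left(\left(\cR\mbox{iem}_n\right)^{op}\right) \simeq B\left(\cR\mbox{iem}_n\right),$$
and combining this with Corollary \ref{cor:diffhom} yields the weak equivalence $B\left(\cR\mbox{iem}_n\right)\simeq B\cR\Gamma^n$. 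The explicit map in the statement can be taken to be the one induced, as in Segal's original argument, by sending a metric on $\RR^n$ to its germ at the origin; the content of the theorem is precisely that this map is a weak equivalence.

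I expect the main obstacle to be the bookkeeping in this key step: verifying carefully that the colimit in $\iGpd$ of the discrete diagram $G$ really is computed by the nerve of its category of elements, and keeping the variance straight so that the Grothendieck construction comes out as $\left(\cR\mbox{iem}_n\right)^{op}$ rather than some other category. Both points are formal once the homotopy colimit of a set-valued diagram is set up correctly, and they are identical to the symplectic case; the only genuinely different input — that $\mathcal{R}_n$ is a set-valued sheaf whose transition maps are pullbacks of metrics — runs exactly parallel to $\mathcal{S}_{2n}$, so no new geometric content beyond Theorem \ref{thm:segalsymp} is required.
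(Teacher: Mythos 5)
Your proposal is correct and follows the paper's own route: the paper proves this theorem by declaring it ``completely analogous'' to Theorem \ref{thm:segalsymp}, i.e.\ exactly your steps --- use Example \ref{ex:riemprol} to write $\left[\cR\Gamma^n\right] \simeq j^n_!\left(\mathcal{R}_n\right)$, apply Theorem \ref{thm:main} to present $\Pi_\i$ of this stack as the colimit of $\mathcal{R}_n$ restricted to $\Emb^{op}$, identify that colimit via Corollary \ref{cor:thom2} with the classifying space of the Grothendieck construction (which is $\cR\mbox{iem}_n$ up to opposite, harmless since $B\C \cong B\C^{op}$), and compare with $B\cR\Gamma^n$ via Corollary \ref{cor:diffhom}. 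One small caveat: your closing assertion that the germ-at-the-origin map realizes the equivalence is not justified by this argument and is not needed --- the paper instead produces an honest map by noting that $B\left(\cR\mbox{iem}_n\right)$ is a CW complex, hence cofibrant, so the isomorphism in the homotopy category is realized by some weak homotopy equivalence.
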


Finally, in Section \ref{sec: Borel}, we derive an alternative description of the Borel construction $M \times_G EG$ of an almost free action of a Lie group $G$ on a smooth manifold $M.$ Firstly, if $n=\dim M - \dim G$, then we define the \emph{Borel category} $\Bor$. Its objects are smooth maps $$f:\R^n \to M$$ which are transverse to all the $G$-orbits, and its arrows from $f$ to $f'$ are pairs $\left(\varphi,\tau\right)$ with $$\varphi:\R^n \hookrightarrow \R^n$$ an embedding, and with $$\tau:\R^n \to G$$ a smooth map such that for all $x \in \R^n$ we have $$f\left(x\right)=\tau\left(x\right) \cdot f'\left(\varphi\left(x\right)\right).$$ We then prove the following theorem:

\begin{thm}
Let $G$ be a Lie group of dimension $k$ and let $M$ be a smooth $\left(n+k\right)$-manifold equipped with an almost free $G$-action. Then $M \times_G EG$ has the same weak homotopy type as $B\left(\Bor\right)$.
\end{thm}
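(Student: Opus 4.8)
The plan is to show that the Borel construction $M \times_G EG$, which is the weak homotopy type of the quotient stack $M//G$, can be presented as $B\left(\Bor\right)$ by exhibiting $M//G$ as an $n$-dimensional \'etale differentiable stack of the form $j^n_!F$ for an appropriate sheaf $F$ on $\nMfd^{\et}$, and then applying the main theorem to identify $\Pi_\i\left(M//G\right)$ with a colimit over $\Emb^{op}$ that computes precisely $B\left(\Bor\right)$. The first step is to understand $M//G$ as an \'etale stack. Because the action is almost free, all stabilizers $G_x$ are discrete, so $\dim\left(M//G\right) = \dim M - \dim G = n$; I expect $M//G$ to be $n$-dimensional \'etale, and the identifications from \cite{prol,higherme} should let me write $M//G \simeq j^n_! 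F$ for a sheaf $F$ on $n$-manifolds and their local diffeomorphisms. The natural candidate for $F$ assigns to an $n$-manifold $U$ the groupoid (or set) of $G$-equivariant data realizing $U$ as a transversal to the $G$-orbits; concretely, $F\left(U\right)$ should be the space of smooth maps $U \to M$ transverse to all $G$-orbits, suitably quotiented by the $G$-action, which is exactly the local model of the Borel category.

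Once $M//G = j^n_! F$ is established, I would invoke the main theorem to express $\Pi_\i\left(M//G\right)$ as the colimit of the composite $\Emb^{op} \to \left(\nMfd^{\et}\right)^{op} \xrightarrow{F} \iGpd$. The remaining work is to identify this colimit with $B\left(\Bor\right)$. The key is to recognize that the colimit over $\Emb^{op}$ of the functor $F$ is computed by a homotopy colimit / Grothendieck-construction whose associated category is precisely $\Bor$: objects are pairs consisting of an embedding datum (a point of $\Emb$, i.e. a self-embedding of $\RR^n$) together with a point of $F\left(\RR^n\right)$, namely a transverse map $f:\RR^n \to M$, and morphisms encode both a self-embedding $\varphi$ of $\RR^n$ and the $G$-valued transition $\tau$ recording the equivariance $f\left(x\right) = \tau\left(x\right)\cdot f'\left(\varphi\left(x\right)\right)$. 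This matches the definition of $\Bor$ given in the statement, so the Grothendieck construction of $F$ over $\Emb$ \emph{is} $\Bor$, and its classifying space computes the desired colimit.

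The main obstacle will be the correct identification of the sheaf $F$ and the verification that its value $F\left(\RR^n\right)$, together with the functoriality in $\Emb$, reproduces exactly the Borel category rather than some Morita-equivalent variant. In particular, I must be careful about the role of the $G$-valued parameter $\tau$: it arises because a transverse map $f:\RR^n \to M$ together with the local diffeomorphism structure only determines a point of $M//G$ up to the $G$-action, so the correct sheaf $F$ must be \emph{groupoid-valued}, encoding the automorphisms and isomorphisms coming from $G$. I would therefore first set up $F$ as the sheaf sending $U$ to the groupoid of $G$-equivariant bundle data over $U$ (equivalently, the quotient groupoid associated to transverse maps $U \to M$ under the $G$-action), check that $j^n_! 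F \simeq M//G$ using the universal property of \'etale prolongation, and then feed this into the main theorem. The transversality hypothesis and almost-freeness are exactly what guarantee that the local transversal picture is $n$-dimensional and \'etale, and that each such $F\left(U\right)$ is the right groupoid; the translation of the abstract colimit into the concrete category $\Bor$ is then a matter of unwinding the Grothendieck construction, where the pairs $\left(\varphi,\tau\right)$ emerge naturally as the morphisms.
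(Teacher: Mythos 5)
Your proposal follows essentially the same route as the paper's own proof: the paper takes your sheaf $F$ to be the stack $\Z$ assigning to an $n$-manifold $N$ the groupoid of principal $G$-bundles $P \to N$ equipped with a $G$-equivariant local diffeomorphism $P \to M$ (equivalently, local diffeomorphisms $N \to M//G$), invokes Theorem \ref{thm:6.1.3} to get $j^n_!\Z \simeq M//G$ and Corollary \ref{cor:thomasonlurie2} to identify $\Pi_\i\left(M//G\right)$ with the classifying space of the Grothendieck construction $\int_{\Emb}\alpha^*\Z$, and then identifies that category with $\Bor$ by trivializing all principal $G$-bundles over $\RR^n$, exactly as you outline. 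The one substantive step your sketch leaves implicit is the paper's Lemma \ref{lem:submersion} --- that $f:\RR^n \to M$ is transverse to all $G$-orbits precisely when the associated equivariant map $G \times \RR^n \to M$ is a submersion, hence (by the dimension count $\dim\left(G \times \RR^n\right)=\dim M$, using almost-freeness) a local diffeomorphism --- which is what makes the objects of the Grothendieck construction literally the objects of $\Bor$, with the pairs $\left(\varphi,\tau\right)$ arising from bundle maps over embeddings just as you predict.
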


\subsection{Higher Categorical Background}
Higher category theory, and in particular the language of $\i$-categories, will be used throughout this paper. However, the reader need not be familiar with this language in order to read this paper, as long as they comfortable with some basic notions from category theory and are willing to accept at faith certain generalizations of these concepts to higher categories. For the reader's convenience, we give a brief introduction to this language in Appendix \ref{sec:infinity}, which we hope to be sufficient for the purposes of reading this paper. The reader will be assumed to have some familiarity with basics of homotopy theory.

\subsection*{Acknowledgments}
We are grateful to Baptiste Devyver, Achim Krause, Tyler Lawson, Urs Schreiber, and Danny Stevenson for helpful discussions. We are also grateful for the pleasant working environments provided by both the mathematics department of the University of British Columbia, as well as the Max Planck Institute for Mathematics, where a large portion of this research was conducted.

\section{A Review of Higher Orbifolds and \'Etale Stacks}\label{sec:higher orbifolds}
In this section, we recall for the reader some of the theory of \'etale differentiable stacks and their higher categorical analogues. 

\subsection{Lie Groupoids}

\begin{dfn}
Denote by $\Mfd$ the category of all smooth manifolds, where we assume that each manifold $M$ in $\Mfd$ is $2^{nd}$ countable and Hausdorff so that $\Mfd$ is essentially small by Whitney's embedding theorem. We denote by $\Mfdt$ the category of all smooth manifolds without these conditions, i.e. the category of topological spaces with a smooth atlas.
\end{dfn}

\begin{dfn}
An \textbf{Lie groupoid} is a groupoid object in the category $\Mfdt$ such that the source and target maps are submersions. Explicitly, it is a diagram

$$\xymatrix{ {\G_1 \times _{\G_0}\G_1} \ar[r]^(0.6){m} & \G_1 \ar@<+.7ex>^s[r]\ar@<-.7ex>_t[r] \ar@(ur,ul)[]_{\hat i}  & \G_0 \ar@/^1.65pc/^{1} [l] }$$
of smooth manifolds and smooth maps between them satisfying the usual axioms. Forgetting the manifold structure, one obtains an ordinary small groupoid. Let $s$ and $t$ denote the source and target maps of a groupoid respectively. Such a Lie groupoid $\G$ is \textbf{\'etale} if the source map $s$ (and therefore the target map $t$) is a local diffeomorphism.

Lie groupoids form a $2$-category with smooth functors as $1$-morphisms and smooth natural transformations as $2$-morphisms. We will denote this $2$-category by $\LieGpd$.
\end{dfn}

\begin{rmk}
Often in other literature, Lie groupoids are required to have a Hausdorff object space. However, we will only be concerned with the differentiable stacks associated to Lie groupoids, and every Lie groupoid $\G$ in the sense we defined can be replaced with another Lie groupoid $\G'$ in the more restrictive sense, such that they have the same associated stack.
\end{rmk}

Given a Lie groupoid $\G,$ and a point $x \in \G_0,$ one may consider the isotropy group $\G_x$ of $x$, which is the collection of all arrows $g:x \to x$ of $\G,$ considered as a closed submanifold of $\G_1.$ $\G_x$ naturally has the structure of a Lie group.

\begin{dfn}
A \textbf{foliation groupoid} is a Lie groupoid $\G$ such that for each $x \in \G_0,$ the isotropy group $\G_x$ is discrete.
\end{dfn}

\begin{rmk}
Every \'etale Lie groupoid is in particular a foliation groupoid.
\end{rmk}

\begin{rmk}
A Lie groupoid $\G$ is a foliation groupoid if and only if the orbits of $\G$ are the leaves of a regular foliation of $\G_0$, that is, all the orbits of $\G$ have the same dimension.
\end{rmk}

\begin{dfn}
A Lie groupoid $\G$ is \textbf{proper} if the map $$\left(s,t\right):\G_1 \to \G_0 \times \G_0$$ is proper. (Recall that a continuous map is proper if the pre-image of any compact set is compact).
\end{dfn}


\begin{dfn}\label{dfn:Morita}
A smooth functor $\varphi: \H \to \G$ of Lie groupoids is a \textbf{Morita equivalence} if the following two properties hold:

\begin{itemize}
\item[i)] (Essentially Surjective)\\
The map $t \circ pr_1: \G_1 \times_{\G_0} \H_0 \to \G_0$ is a surjective submersion, where $\G_1 \times_{\G_0} \H_0$ is the fibered product

$$\xymatrix{\G_1 \times_{\G_0} \H_0 \ar[r]^-{pr_2} \ar[d]_-{pr_1} & \H_0 \ar[d]^-{\varphi} \\
\G_1 \ar[r]^-{s} & \G_0.}$$
\item[i)] (Fully Faithful)
The following is a fibered product:

$$\xymatrix{\H_1 \ar[r]^-{\varphi} \ar[d]_-{\left(s,t\right)} &\G_1 \ar[d]^-{\left(s,t\right)}  \\
\H_0 \times \H_0 \ar[r]^-{\varphi \times \varphi} & \G_0 \times \G_0.}$$
\end{itemize}
Two Lie groupoids $\Ll$ and $\K$ are \textbf{Morita equivalent} if there is a chain of Morita equivalences $\Ll \leftarrow \H \rightarrow \K$.
\end{dfn}


\begin{prop}\label{prop:folgpd} (Proposition 5.20 of \cite{fol})\\
A Lie groupoid $\G$ is a foliation groupoid if and only if it is Morita equivalent to an \'etale Lie groupoid.
\end{prop}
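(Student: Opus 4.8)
The plan is to prove the two implications separately, exploiting the characterization recorded in the remark above: $\G$ is a foliation groupoid exactly when its orbits are the leaves of a regular foliation $\mathcal F$ of $\G_0$. Throughout I would lean on the infinitesimal reformulation that $\G$ is a foliation groupoid if and only if $(s,t)\colon \G_1 \to \G_0 \times \G_0$ is an immersion. Indeed, $\ker ds_g \cap \ker dt_g$ is the tangent space at $g$ to the $(s,t)$-fibre through $g$, which is a torsor under the isotropy group $\G_{s(g)}$; so this intersection vanishes for every $g$ precisely when all isotropy groups are discrete. A consequence I would record first is that $t$ then restricts to a local diffeomorphism from each source-fibre $s^{-1}(x)$ onto the orbit $\O_x$, so all orbits share a common dimension $p$ and $\dim \G_1 = \dim \G_0 + p$.

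For the easy direction, suppose $\G$ is Morita equivalent to an \'etale groupoid $\K$ through a chain of Morita equivalences. An \'etale groupoid has discrete isotropy, since each isotropy group lies inside a source-fibre and source-fibres of an \'etale groupoid are discrete. It therefore suffices to check that discreteness of the isotropy is invariant under a single Morita equivalence $\varphi\colon \H \to \G$. Given $x \in \G_0$, essential surjectivity yields $y \in \H_0$ together with an arrow $g\colon \varphi(y) \to x$; conjugation by $g$ identifies $\G_x$ with $\G_{\varphi(y)}$, while the fully-faithful pullback square identifies $\H_y$ diffeomorphically with the $(s,t)$-fibre $\G_{\varphi(y)}$. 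Hence $\G_x \cong \H_y$ as Lie groups, and discreteness propagates along the chain from $\K$ down to $\G$, so $\G$ is a foliation groupoid.

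For the converse, assume $\G$ is a foliation groupoid with foliation $\mathcal F$ of codimension $q = \dim \G_0 - p$. I would first choose a \emph{complete transversal}: from a locally finite cover of $\G_0$ by foliation charts, selecting one $q$-dimensional plaque transverse to $\mathcal F$ in each, I obtain a $q$-manifold $T$ with an immersion $f\colon T \to \G_0$ transverse to every orbit and meeting every leaf (here second-countability of $\G_0$ is used). I then form the restricted groupoid $\G|_T$, with object manifold $T$ and arrow space the pullback $(\G|_T)_1 = (T \times T) \times_{\G_0 \times \G_0} \G_1$. The main computation is to show this is a well-defined \'etale groupoid. Transversality lets me verify that $(s,t)$ is transverse to $f \times f$: since $dt$ carries $\ker ds_g$ isomorphically onto the orbit direction $T_{t(g)}\O$, and symmetrically $ds$ carries $\ker dt_g$ onto $T_{s(g)}\O$, the image of $d(s,t)_g$ contains $T_{s(g)}\O \times T_{t(g)}\O$, which together with $\operatorname{im}(df) \times \operatorname{im}(df)$ spans $T(\G_0 \times \G_0)$. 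Thus $(\G|_T)_1$ is a smooth manifold of dimension $\dim\G_1 + 2q - 2\dim\G_0 = q = \dim T$, and a short tangent-space argument (correcting an arbitrary lift by an element of $\ker ds_g$) shows the source map $(\G|_T)_1 \to T$ is a submersion; matching dimensions then force it to be a local diffeomorphism, so $\G|_T$ is \'etale.

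Finally I would check that the inclusion $\G|_T \hookrightarrow \G$ is a Morita equivalence. Full faithfulness is automatic, as $(\G|_T)_1$ is defined as the relevant fibered product. For essential surjectivity I must show $t \circ pr_1\colon \G_1 \times_{\G_0} T \to \G_0$ is a surjective submersion: surjectivity is exactly completeness of the transversal, and the submersion property is where the foliation-groupoid hypothesis is decisive, since on the subspace $\{\, v \in T_g\G_1 : ds(v) \in \operatorname{im}(df) \,\}$ the kernel of $dt$ equals $\ker ds_g \cap \ker dt_g = \ker d(s,t)_g = 0$, so $dt$ is injective there and, by the dimension count, surjective onto $T_{t(g)}\G_0$. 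The main obstacle is precisely this converse direction, and within it the interlocking transversality and dimension bookkeeping that must simultaneously make $\G|_T$ smooth, make its source map a local diffeomorphism, and make the inclusion essentially surjective, all of which hinge on the immersivity of $(s,t)$ supplied by discreteness of the isotropy; constructing an honest complete transversal in the possibly non-Hausdorff setting of $\Mfdt$ needs a little care, but the paper's tolerance of non-Hausdorff Lie groupoids removes the usual difficulty.
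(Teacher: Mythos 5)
Your proposal is correct and follows essentially the same route as the paper, which simply cites Crainic--Moerdijk and sketches the hard direction via the very construction you carry out: choose a complete transversal $T$, form the restricted groupoid $\G_T$, show it is \'etale, and show the canonical functor $\G_T \to \G$ is a Morita equivalence. You additionally supply the details the paper defers to the reference (the Morita-invariance of discrete isotropy for the easy direction, and the transversality and dimension arguments for the hard one), but this is an elaboration of the same argument rather than a different approach.
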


\begin{proof}
For the full proof, see \cite{fol}, p. 136. Let $T \hookrightarrow \G_0$ be a complete transversal to the foliation on $\G_0$ induced by the orbits of the groupoid, that is $T$ is a closed submanifold which hits every orbit at least once. Then $\G_T$ is \'etale and the canonical map $\G_T \to \G$ is a Morita equivalence.
\end{proof}

\begin{rmk}
If $\G$ is a foliation groupoid and the orbits of $\G$ all have codimension $q,$ then the \'etale Lie groupoid $\G_T$ constructed above has dimension $q$.
\end{rmk}

There are many interesting examples of Lie groupoids important for this paper:

\begin{ex}\label{ex:id}
Any manifold $M$ can be considered as a Lie groupoid $M^{id}$, whose only arrows are the identity arrows. $M^{id}$ is trivially an \'etale Lie groupoid.
\end{ex}

\begin{ex}
If $G$ is a Lie group, then $G$ may be considered as a Lie groupoid $$G \rrarrow *$$ whose space of objects is the one point space.
\end{ex}

\begin{ex}\label{ex:pair}
If $f:N \to M$ is a submersion, then we can form the \textbf{pair groupoid} $Pair\left(f\right)$ whose object space is $N$ and whose arrow space is $N \times_M N$. Here, an element $\left(x,y\right) \in N \times_M N$ is viewed as an arrow from $y$ to $x$ and composition is given by the rule $$\left(x,y\right) \circ \left(y,z\right)=\left(x,z\right).$$ There is a canonical smooth functor $Pair\left(f\right) \to M^{id}$ which is a Morita equivalence.
\end{ex}

\begin{ex}\label{ex:cech}
If $\mathcal{U}=\left(U_\alpha \hookrightarrow M\right)$ is an open cover of a manifold $M,$ then associated to this cover there is the \textbf{C\v{e}ch groupoid} $M_\mathcal{U}$. This groupoid is the pair groupoid of the projection $$\coprod_\alpha U_\alpha \to M,$$ so the object space of this Lie groupoid is the disjoint union $$\coprod_\alpha U_\alpha$$ and the arrows space is the disjoint union $$\coprod_{\alpha,\beta} U_\alpha \cap U_\beta.$$ There is a canonical smooth functor $M_\mathcal{U} \to M^{id}$ which is a Morita equivalence. All C\v{e}ch groupoids are \'etale Lie groupoids.
\end{ex}

\begin{rmk}
Example \ref{ex:id} is the special case of Example \ref{ex:cech} where the open cover is the identity map.
\end{rmk}

\begin{ex}\label{ex:action}
Let $G$ be a Lie group acting on a smooth manifold $M$. Associated to this action is the \textbf{action groupoid} $G \ltimes M$. The object space of this groupoid is $M$ and the arrow space is the manifold $G \times M,$ where a pair $$\left(g,x\right) \in G \times M$$ is an arrow $$x \to g \cdot x,$$ with the obvious composition rule. If the action is almost free (i.e. each stabilizer group $G_x$ is discrete), then $G \ltimes M$ is a foliation groupoid. If $G$ itself is discrete, then $G \ltimes M$ is an \'etale Lie groupoid. If the action of $G$ is proper, then the action groupoid is proper.
\end{ex}

\begin{ex}\label{ex:Haefliger}
Let $M$ be a manifold. Consider the sheaf $\cL$ of sets on $M$ which assigns to each open subset $U$ of $M$ the set of (abstract) local diffeomorphisms of $U$ into $M$. (This is the sheafification of the presheaf of embeddings). Let $$s:\Gamma\left(M\right)_1 \to M$$ be the \'etal\'e space of this sheaf, i.e. $s$ is the local homeomorphism over $M$ whose sheaf of sections is $\cL.$ Since $s$ is a local homeomorphism, the space $\Gamma\left(M\right)_1$ inherits a smooth atlas, making it a smooth manifold (although it is highly non-Hausdorff). As a set, $\Gamma\left(M\right)_1$ is the disjoint union of the stalks of $\cL$:
$$\Gamma\left(M\right)_1\cong \coprod_{x \in M} \cL_x.$$ A point in $\cL_x$ is of the form $\mathit{germ}_x\varphi$ for $\varphi:U \to M$ a local diffeomorphism from a neighborhood $U$ of $x.$ The assignment $$\mathit{germ}_x\varphi \mapsto \varphi\left(x\right)$$ is well defined and assembles into a local diffeomorphism $$t:\Gamma\left(M\right)_1 \to M.$$ Composition of germs gives a groupoid structure, defining an \'etale Lie groupoid $\Gamma\left(M\right),$ called the \textbf{Haefliger groupoid} of $M,$ whose space of objects is $M$ and whose arrows $x \to y$ are given by germs of locally defined diffeomorphisms around $x$ that carry $x$ to $y$. The Haefliger groupoid of $\R^n$ will be denoted by $\Gamma^n$.
\end{ex}

\begin{ex}\label{ex:sympgpd}
Consider $\R^{2n}$ with its canonical symplectic structure, which in coordinates $\left(p_1,\ldots,p_n,q_1,\ldots,q_n\right)$ is given by $$\omega_{can.}=\sum_{i=1}^{n} dp_i \wedge dq_i.$$
Each open subset $U$ is canonically a symplectic manifold with symplectic form $\omega_{can.}|_U,$ and one can consider the presheaf on $\R^{2n}$ assigning to $U$ the set of symplectic embeddings $$U \hookrightarrow \R^{2n},$$ both endowed with their canonical symplectic structure. Its sheafification is the sheaf $\mathcal{LS}$ assigning $U$ the set of local symplectomorphisms $$U \to \R^{2n}.$$ A completely analogous construction to Example \ref{ex:Haefliger} now produces an \'etale Lie groupoid $\Gamma^{Sp}_{2n}$ whose space of objects is $\R^{2n}$ and whose arrows $x \to y$ are given by germs of locally defined symplectomorphisms carrying $x$ to $y$.
\end{ex}

\begin{ex}\label{ex:riemgpd}
Let $\cR$ be the sheaf on $\R^n$ which assigns to each open subset $U$ the set of Riemannian metrics on $U$. Let $N \to \R^n$ be the \'etal\'e space of this sheaf. Then $N$ inherits the structure of a (non-Hausdorff) smooth $n$-dimensional manifold and carries a canonical Riemannian metric. An analogous construction to Example \ref{ex:Haefliger} and Example \ref{ex:sympgpd} yields an \'etale Lie groupoid $\cR\Gamma^n$ whose space of objects is $N,$ and whose arrows $x \to y$ are given by germs of locally defined isometries of $N$ carrying $x$ to $y$. We will denote this groupoid by $\cR\Gamma^n.$
\end{ex}

\subsection{Classifying Spaces of Lie Groupoids and the Classification of Foliations}\label{sec:foliations}

Associated naturally to each Lie groupoid $\G$ is a simplicial manifold $$N\left(\G\right)_\bullet:\Delta^{op} \to \widetilde{Mfd},$$ which is its simplicial nerve. The underlying set is the ordinary nerve of the underlying (discrete) groupoid of $\G,$ so it suffices to describe the manifold structure on each set $N\left(\G\right)_n.$ For $n=0$ $$N\left(\G\right)_0=\G_0$$ and for $n=1$ $$N\left(\G\right)_1=\G_1,$$ which are manifolds, and for $n>1$ $$N\left(\G\right)_n=\G_1 \times_{\G_0} \G_1 \times_{\G_0} \ldots \times_{\G_0} \G_1,$$ which is an iterated pullback of submersions, hence also a manifold.

\begin{dfn}
If $\G$ is a Lie groupoid, we denote by $B\G$ the fat geometric realization (Definition \ref{dfn:fat}) $||N\left(\G\right)||$ of the underlying simplicial space of the simplicial manifold $N\left(\G\right)_\bullet$, and call it the \textbf{classifying space} of $\G$.
\end{dfn}

\begin{ex}
Regarding a Lie group $G$ as a Lie groupoid with one object, its classifying space $BG$ as above is a particular model for the classifying space for principal $G$-bundles. If $G \ltimes G$ is the action groupoid associated to the canonical (left) action of $G$ on itself, the canonical map $$B\left(G \ltimes G\right) \to BG$$ is a universal principal $G$-bundle, that is, $B\left(G \ltimes G\right)$ is an $EG$. More generally, by Proposition 4.3.32 of \cite{dcct}, if $G \acts M$ is a smooth action of a Lie group, then $B\left(G \ltimes M\right)$ is homeomorphic to the Borel construction $M \times_{G} EG,$ where $EG$ is taken to be $B\left(G \ltimes G\right)$ as above.
\end{ex}

The classifying spaces of the Lie groupoids from Examples \ref{ex:Haefliger}, \ref{ex:sympgpd}, and \ref{ex:riemgpd} play an important role in foliation theory. We will summarize an important result of Haefliger's on the classification of regular foliations:

\begin{dfn}\label{dfn:integrably}
Two foliations $\cF_0$ and $\cF_1$ on a manifold $M$ are \textbf{integrably homotopic} if there is a foliation $\mathcal{H}$ on $M \times I$ transverse to each slice $M \times \left\{t\right\}$ and inducing $\cF_i$ on each $M \times \left\{i\right\},$ for $i=0,1.$
\end{dfn}

There exists a smooth functor $$\Gamma^q \to \operatorname{GL}_q$$ between the Haefliger groupoid of $\RR^q$ and the Lie group $GL_q$ regarded as a one-object Lie groupoid, induced by the assignment to each local diffeomorphism $\varphi$ its derivative. By functorially, there is an induced map between classifying spaces $$v:B\Gamma^q \to B\operatorname{GL}_q.$$

The following is Haefliger's theorem:
\begin{thm}(\cite{Haefliger}, Section 3)
If $M$ is a paracompact open manifold of dimension $n,$ then there is a bijection between integrable homotopy classes of foliations of $M$ of codimension $q$ and homotopy classes of lifts
$$\xymatrix@C=2.5cm{& B\Gamma^q \times B\operatorname{GL}_{n-q} \ar[d]^-{v \times id}\\
& B\operatorname{GL}_q \times B\operatorname{GL}_{n-q} \ar[d]^-{\oplus}\\
M \ar[r]^-{\tau M} \ar@{-->}[ruu] & B\operatorname{GL}_n,}$$ where $\tau M$ is the map classifying the tangent bundle of $M$ and $\oplus$ is the Whitney sum map.
\end{thm}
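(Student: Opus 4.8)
The plan is to reduce the statement to the universal property of the Haefliger classifying space $B\Gamma^q$ together with an $h$-principle for open manifolds. First I would recall the foundational fact underlying the whole circle of ideas: $B\Gamma^q$ is the classifying space for codimension-$q$ Haefliger ($\Gamma^q$-)structures, in the sense that for any manifold $M$ concordance classes of $\Gamma^q$-cocycles on $M$ are in natural bijection with homotopy classes of maps $M \to B\Gamma^q$, and the composite with $v:B\Gamma^q \to B\operatorname{GL}_q$ records the normal bundle of such a structure. A genuine codimension-$q$ foliation $\cF$ on $M$ determines a Haefliger structure via its locally defining submersions to $\RR^q$; this structure has normal bundle $\nu = TM/T\cF$, and $\cF$ also carries the honest tangent distribution $T\cF \subset TM$, a rank-$(n-q)$ subbundle.

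Next I would reinterpret the lifting data in the diagram. A homotopy class of lift of $\tau M$ through $\oplus$ and $v \times \operatorname{id}$ consists of a map $M \to B\Gamma^q$ (a Haefliger structure with normal bundle $\nu$), a map $M \to B\operatorname{GL}_{n-q}$ (a rank-$(n-q)$ bundle $\tau$), and a homotopy witnessing $\nu \oplus \tau \cong TM$. Thus a lift is precisely a codimension-$q$ Haefliger structure on $M$ together with a bundle splitting $TM \cong \nu \oplus \tau$ in which $\nu$ is the normal bundle of the structure; equivalently, a Haefliger structure equipped with a bundle epimorphism $TM \to \nu$. The assignment sending $\cF$ to its associated Haefliger structure, its normal bundle $\nu$, its tangent bundle $\tau = T\cF$, and the tautological splitting $TM \cong \nu \oplus T\cF$ therefore defines a map from foliations to lifts. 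I would then check that an integrable homotopy of foliations in the sense of Definition \ref{dfn:integrably} induces a homotopy of the associated lifts, so that this assignment descends to a well-defined map on integrable homotopy classes; this amounts to running the same construction over $M \times I$.

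The heart of the argument, and the step I expect to be by far the hardest, is showing this map is a bijection, and this is exactly where the hypothesis that $M$ be open is indispensable. The key input is Gromov's $h$-principle for $\operatorname{Diff}$-invariant open relations on open manifolds, in the form used by Haefliger (closely related to Phillips' submersion theorem): on an open manifold the Frobenius integrability condition is flexible, so every Haefliger structure equipped with a formal epimorphism $TM \to \nu$ is integrably homotopic to a genuine foliation realizing that formal data, and any two foliations that are formally homotopic through such data are integrably homotopic. Surjectivity of our map is the existence half of this statement and injectivity is the uniqueness half. I would phrase the formal versus holonomic data as sections of an appropriate jet-type bundle and invoke the $h$-principle to deform a formal solution to a holonomic one within its homotopy class, taking care to carry out the deformations parametrically (over $M \times I$) so as to obtain the homotopy statements and not merely the pointwise ones.

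Finally I would assemble the pieces: the universal property identifies the space of lifts with Haefliger-structure-plus-splitting data, the tautological construction produces the map from foliations, and the $h$-principle supplies both surjectivity and injectivity, yielding the claimed bijection. The only genuinely analytic ingredient is the $h$-principle; everything else is bookkeeping with classifying spaces and the naturality of the Haefliger construction. Since the present paper treats this result as a known theorem of Haefliger's, in practice I would cite \cite{Haefliger} for the $h$-principle step rather than reproduce Gromov's flexibility argument in full.
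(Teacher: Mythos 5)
The paper does not prove this statement at all: it is quoted verbatim as Haefliger's theorem, with the citation (\cite{Haefliger}, Section 3) standing in for the entire argument, so there is no in-paper proof to compare yours against. Judged on its own terms, your outline is a faithful reconstruction of the standard argument from the cited literature. The three ingredients you identify are the right ones: (i) the classification of codimension-$q$ Haefliger ($\Gamma^q$-)structures up to concordance by homotopy classes of maps $M \to B\Gamma^q$, with $v$ recording the normal bundle; (ii) the unwinding of a lift in the diagram as a $\Gamma^q$-structure with normal bundle $\nu$, a rank-$(n-q)$ bundle $\tau$, and an isomorphism $\nu \oplus \tau \cong TM$, i.e.\ formal data consisting of a Haefliger structure plus an epimorphism $TM \to \nu$; and (iii) the Gromov--Phillips-type $h$-principle on open manifolds, which converts formal solutions to genuine foliations (surjectivity) and formal homotopies to integrable homotopies (injectivity). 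This is exactly how Haefliger organizes the proof, and your observation that openness of $M$ enters only through the $h$-principle is correct.

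The one caveat worth stating plainly: your proposal, like the paper, ultimately defers the entire analytic content --- the flexibility of the integrability condition on open manifolds --- to the citation. That is a legitimate choice here (the paper itself treats the theorem as known), but it means your write-up is a correct \emph{reduction} of the statement to Haefliger's $h$-principle rather than a self-contained proof; nothing in your bookkeeping steps (i) and (ii) would fail, and they are routine, so the proposal contains no gap beyond the step you explicitly flag as cited.
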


\begin{dfn}
A \textbf{foliation with transverse metric} on $M$ is a foliation $\cF$ on $M$ together with a positive $g \in \Gamma\left(M,\operatorname{Sym}^2T^*M\right)$ such that
\begin{itemize}
\item[a)] For all $x \in M,$ $\ker\left(g_x\right)=\cF_x,$
\item[b)] $g$ is invariant under the flows tangent to the leaves.
\end{itemize}
\end{dfn}

\begin{dfn}
A \textbf{foliation with transverse symplectic structure} on $M$ is a foliation $\cF$ together with a closed $2$-form $\omega$ such that for all $x,$ $\ker\left(\omega_x\right)=\cF_x.$
\end{dfn}

\begin{rmk}
The invariance under flows along leaves in the above definition is in fact automatic by Cartan's magic formula $$\cL_X=\iota_X \circ d + d \circ \iota_X,$$ since $\omega$ is a closed form.
\end{rmk}

Since both $\Gamma^{Sp}_{2q}$ and $\cR\Gamma^q$ are groupoids of germs of local diffeomorphisms, there are analogously defined smooth functors $v$ to $\operatorname{GL}_{2q}$ and $\operatorname{GL_q}$ respectively. Moreover, Definition \ref{dfn:integrably} has an obvious generalization for foliations with transverse metrics and transverse symplectic structures, respectively; one has to simply ask for the homotopy $\mathcal{H}$ to be a foliation of the same type. There is a variant of Haefliger's theorem which also holds in these cases, namely:

\begin{thm}(\cite{Haefliger}, Section 3)\label{thm:Segaltrans}
Let $M$ be a paracompact open manifold of dimension $n.$ There is a bijection between integrable homotopy classes of foliations with transverse metrics on $M$ of codimension $q$ and homotopy classes of lifts
$$\xymatrix@C=2.5cm{& B\cR\Gamma^q \times B\operatorname{GL}_{n-q} \ar[d]^-{v \times id}\\
& B\operatorname{GL}_q \times B\operatorname{GL}_{n-q} \ar[d]^-{\oplus}\\
M \ar[r]^-{\tau M} \ar@{-->}[ruu] & B\operatorname{GL}_n,}$$ and there is also a bijection between integrable homotopy classes of foliations with transverse symplectic forms on $M$ of codimension $2q$ and homotopy classes of lifts
$$\xymatrix@C=2.5cm{& B\Gamma^{Sp}_{2q} \times B\operatorname{GL}_{n-q} \ar[d]^-{v \times id}\\
& B\operatorname{GL}_q \times B\operatorname{GL}_{n-q} \ar[d]^-{\oplus}\\
M \ar[r]^-{\tau M} \ar@{-->}[ruu] & B\operatorname{GL}_n.}$$
\end{thm}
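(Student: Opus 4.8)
The plan is to treat this as a structured refinement of Haefliger's classification of ordinary foliations on open manifolds, and to follow the same two-step pattern: first I would classify the relevant \emph{Haefliger structures with transverse geometry} by homotopy classes of maps into the classifying space of the appropriate \'etale groupoid, and then invoke the $h$-principle for open manifolds to pass from such homotopy-theoretic (formal) data to genuine integrable foliations carrying the transverse structure. I will treat both cases in parallel, writing $\g$ for either $\cR\Gamma^q$ or $\Gamma^{Sp}_{2q}$ and $c$ for the corresponding codimension ($q$, resp.\ $2q$); the two statements differ only in the rank of the normal bundle and in the transverse structure being preserved.

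First I would set up the classification of $\g$-structures. Both $\cR\Gamma^q$ and $\Gamma^{Sp}_{2q}$ are \'etale groupoids of germs of local diffeomorphisms preserving the relevant transverse structure (a Riemannian metric, resp.\ the canonical symplectic form), exactly as in Examples~\ref{ex:riemgpd} and~\ref{ex:sympgpd}. The general Haefliger theory of $\Gamma$-structures should then apply verbatim: a codimension-$c$ $\g$-structure on $M$ is a Haefliger cocycle valued in $\g$, its concordance classes are in bijection with homotopy classes of maps $M \to B\g$, and the associated normal bundle is recovered by composing with the linearization $v:B\g \to B\operatorname{GL}_c$. A foliation of $M$ with the prescribed transverse structure determines such a $\g$-structure whose normal bundle is the honest normal bundle $\nu\cF$ of the foliation, together with a splitting $TM \cong \tau\cF \oplus \nu\cF$. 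Tracking this splitting through $\tau M$, the Whitney sum map $\oplus$, and $v \times \mathrm{id}$ is precisely what a lift in the stated diagram records, so I expect a lift to be the same datum as a \emph{formal} codimension-$c$ foliation with transverse structure, i.e.\ a $\g$-structure together with an embedding of its normal bundle into $TM$ as a complemented subbundle.

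The second step will upgrade formal solutions to genuine ones, and here the hypothesis that $M$ is open and paracompact is essential. The key input is that the sheaf of codimension-$c$ foliations with transverse metric (resp.\ transverse symplectic form) is \emph{microflexible} and invariant under the pseudogroup of local diffeomorphisms preserving the structure---the very pseudogroup whose germs constitute $\g$. Gromov's $h$-principle (the microflexibility/flexibility mechanism for such $\operatorname{Diff}$-invariant sheaves, equivalently the Haefliger--Phillips regularization of $\Gamma$-structures into honest foliations) should then show, on the open manifold $M$, that the inclusion of genuine solutions into formal ones is a weak homotopy equivalence, upgrading concordance to integrable homotopy. Combining this with the bijection of the first step would give the asserted bijection between integrable homotopy classes of foliations with transverse structure and homotopy classes of lifts.

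The hard part will be the verification of the $h$-principle in the structured setting, namely establishing microflexibility (together with $\operatorname{Diff}$-invariance) of the sheaves defined by $\cR\Gamma^q$ and $\Gamma^{Sp}_{2q}$. The symplectic case looks the most delicate: one must exploit the flexibility of the sheaf $\mathcal{LS}$ of local symplectomorphisms, using Darboux's theorem to supply local models and Gromov's symplectic flexibility to extend formal data, so that germs of transverse symplectic foliations can be deformed microflexibly. A secondary but still nontrivial point is the bookkeeping of the first step: I would need to check that the normal-bundle datum extracted via $v$, paired with the rank-$(n-c)$ tangential complement through $\oplus$ and $\tau M$, corresponds exactly to a homotopy class of lift, with no residual obstruction beyond those already visible in the diagram.
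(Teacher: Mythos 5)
This theorem is not actually proven in the paper: it is quoted from Haefliger with the citation standing in for the proof, so your proposal has to be measured against Haefliger's own argument (the cited Section 3) rather than against anything in this text. Your two-step skeleton --- classify $\Gamma$-structures by homotopy classes of maps to $B\Gamma$, read off the normal bundle through $v$, identify lifts of $\tau M$ in the stated diagram with formal (non-integrated) solutions, and then use an $h$-principle on the open manifold $M$ to promote formal solutions to genuine foliations up to integrable homotopy --- is exactly the architecture of Haefliger's proof, so the outline is sound.

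The genuine gap is in where you locate the analytic content. You propose to prove the regularization step by establishing microflexibility of ``the sheaf of foliations with transverse metric (resp.\ transverse symplectic form),'' invoking Darboux's theorem and Gromov's symplectic flexibility in the symplectic case. That is not the mechanism, and as stated it is doubtful it can be carried out: foliations are not sections of any natural fibration (this is precisely why Haefliger structures were invented), so there is no ready-made microflexible-sheaf formalism for ``structured foliations,'' and nothing like symplectic flexibility is needed anywhere. In Haefliger's argument the structured case costs essentially nothing beyond the unstructured one: a $\Gamma$-structure $\omega$ on $M$ together with an embedding $\nu\left(\omega\right) \hookrightarrow TM$ is realized as the pullback of a germ of a \emph{genuine} $\Gamma$-foliation living on the total space of $\nu\left(\omega\right)$ near its zero section, and one then applies the Phillips--Gromov transversality $h$-principle (for maps of an open manifold into a foliated manifold) to deform the zero section to a map transverse to that foliation; the pullback of a $\Gamma$-foliation along a transverse map is again a $\Gamma$-foliation, so the transverse metric or symplectic form is simply pulled back. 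The $h$-principle input is completely insensitive to the transverse structure; only the \'etale groupoid $\Gamma$ changes. A second, related slip: your parallel framing of the two groupoids is wrong in the Riemannian case. $\cR\Gamma^q$ is \emph{not} the groupoid of germs of diffeomorphisms preserving a fixed metric --- there is no Darboux theorem for metrics --- which is why Example \ref{ex:riemgpd} takes as object space the \'etal\'e space of the sheaf of \emph{all} metrics on $\R^q$, with arrows the germs of isometries between variable local metrics. Had one used germs of isometries of a fixed (say flat) metric, $B\Gamma$ would classify foliations with transverse \emph{flat} structure, a different and far more rigid theorem. (Minor: in the symplectic diagram the middle term should be $B\operatorname{GL}_{2q} \times B\operatorname{GL}_{n-2q}$; as printed this is a typo in the paper itself, not an error of yours.)
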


\begin{rmk}
The Lie groupoids $\Gamma^{Sp}_{2q}$ and $\cR\Gamma^q$ are only examples that seem particularly illustrative because they are geometric in nature; Haefliger's theorem in fact applies to a large class \'etale Lie groupoids $\Gamma,$ and for each such $\Gamma$ there is an associated notion of a $\Gamma$-foliation. For $\Gamma^{Sp}_{2q}$ and $\cR\Gamma^q,$ these agree with foliations with transverse symplectic structure and transverse metric respectively.
\end{rmk} 

We end this subsection by citing a celebrated theorem of Segal:

\begin{thm} (Proposition 1.3 of \cite{Segal})\\
Let $\Emb$ denote the discrete monoid of self embeddings of $\RR^n$. Then there is a weak homotopy equivalence between classifying spaces:
$$B\left(\Emb\right) \to B \Gamma^n.$$ 
\end{thm}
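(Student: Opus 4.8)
The plan is to exhibit both $B\left(\Emb\right)$ and $B\Gamma^n$ as two different computations of a single invariant, the fundamental $\i$-groupoid $\Pi_\i\left(\left[\Gamma^n\right]\right)$ of the Haefliger stack, and then to compare them.

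The first and most essential step is to recognize the Haefliger stack as an \'etale prolongation. In exact analogy with the identifications $\left[\Gamma^{Sp}_{2q}\right] = j^{2q}_!\left(\mathcal{S}_{2q}\right)$ and $\left[\cR\Gamma^q\right] = j^q_!\left(\mathcal{R}_q\right)$ recalled above, the groupoid $\Gamma^n$ carries no transverse structure beyond that of a bare $n$-manifold, so the sheaf on $\nMfd^{\et}$ governing it is the terminal sheaf $\ast$, which assigns a single point to every $n$-manifold. I would therefore establish $\left[\Gamma^n\right] = j^n_!\left(\ast\right)$, either by citing the categorical characterization of \'etale stacks in \cite{prol,higherme} or by arguing directly that the germ presentation of $\Gamma^n$ realizes $\left[\Gamma^n\right]$ as the terminal $n$-dimensional \'etale differentiable stack, matching the terminal object $\ast$ of $\Shi\left(\nMfd^{\et}\right)$ under the fully faithful functor $j^n_!$.

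With this in hand, I would apply our main theorem to $F = \ast$ and $\X = j^n_!\left(\ast\right) = \left[\Gamma^n\right]$, which presents $\Pi_\i\left(\left[\Gamma^n\right]\right)$ as the colimit of the composite $\Emb^{op} \to \left(\nMfd^{\et}\right)^{op} \stackrel{\ast}{\longrightarrow} \iGpd$. Because $\ast$ is constant at the terminal $\i$-groupoid, this composite is the constant point-valued diagram indexed by the one-object category $\Emb^{op}$. The colimit in $\iGpd$ of the constant point-diagram over a small category $\mathcal C$ is the classifying space $\left|N\mathcal C\right|$, so here the colimit is $\left|N\left(\Emb^{op}\right)\right|$; using the natural homeomorphism $\left|N\left(\mathcal C^{op}\right)\right| \cong \left|N\mathcal C\right|$ between the realizations of a nerve and its opposite, this is precisely $B\left(\Emb\right)$. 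Hence $\Pi_\i\left(\left[\Gamma^n\right]\right) \simeq B\left(\Emb\right)$.

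Finally, Corollary \ref{cor:diffhom} identifies $\Pi_\i\left(\left[\Gamma^n\right]\right)$ with the classifying space $B\Gamma^n$, since $\left[\Gamma^n\right]$ is the differentiable stack of the Lie groupoid $\Gamma^n$. Combining the two computations produces the desired weak homotopy equivalence $B\left(\Emb\right) \simeq B\Gamma^n$. The only residual task is to check that this abstract equivalence is induced by the canonical comparison map named in the statement, tracking how the atlas $\RR^n \to \left[\Gamma^n\right]$ and the action of self-embeddings assemble into the map, together with the harmless identification of $\Emb$ with $\Emb^{op}$ at the level of classifying spaces. I expect the main obstacle to be the first step, the identification $\left[\Gamma^n\right] = j^n_!\left(\ast\right)$; once that is secured, everything else follows formally from the main theorem, the elementary colimit computation, and the corollary.
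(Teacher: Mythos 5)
Your proposal is correct and follows essentially the same route as the paper's proof: the identification $\left[\Gamma^n\right] \simeq j^n_!\left(1\right)$ that you flag as the main obstacle is precisely Corollary \ref{cor:ex1}, which the paper deduces from the terminality of $\HA_n$ among $n$-dimensional \'etale stacks with local diffeomorphisms (Theorems \ref{thm:6.1.6} and \ref{thm:6.1.3}), and the remaining steps --- applying Theorem \ref{thm:main} to the terminal sheaf, computing the colimit of the constant point diagram as $h\left(B\left(\Emb^{op}\right)\right)$ via Corollary \ref{cor:thom2}, using $B\C \cong B\C^{op}$, and invoking Corollary \ref{cor:diffhom} --- match the paper exactly. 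The paper resolves your final concern about producing an actual map simply by noting that $B\left(\Emb\right)$ is a CW-complex, hence cofibrant, so the isomorphism in the homotopy category is realized by a genuine weak homotopy equivalence $B\left(\Emb\right) \to B\Gamma^n$.
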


One of the main results of this paper is a generalization of this result  for the Lie groupoids $\Gamma^{Sp}_{2q}$ and $\cR\Gamma^q.$ In fact, it applies to to the much larger class of \'etale Lie groupoids for which Haefliger's theorem holds.

\subsection{Higher Differentiable Stacks}\label{sec:diffstacks}

In this subsection, we will give a streamlined review of the theory of higher stacks and \'etale differentiable stacks needed for this article. There is a more detailed and precise account given in Appendix \ref{sec:stacks}.

\subsection{Differentiable Stacks}

\begin{dfn}
Denote by $\Psh_1\left(\Mfd\right)$ the bicategory of (possibly weak) $2$-functors $$\X:\Mfd^{op} \to \Gpd$$ from (the opposite of) the category $\Mfd$ into the bicategory of (essentially small) groupoids.
\end{dfn}

Let $\G$ be a Lie groupoid. Then $\G$ determines a weak presheaf of groupoids on $\Mfd$ by the rule

\begin{equation*}
M \mapsto \Hom_{\LieGpd}\left(M^{id},\G\right),
\end{equation*}
This defines a $2$-functor $\tilde y: \LieGpd \to \Psh_1\left(\Mfd\right)$ and we have the obvious commutative diagram

$$\xymatrix{\Mfd  \ar[d]_{\left(\mspace{2mu} \cdot \mspace{2mu}\right)^{(id)}} \ar[r]^{y} & \Psh\left(\Mfd\right) \ar^{\left(\mspace{2mu} \cdot \mspace{2mu}\right)^{(id)}}[d]\\
\LieGpd \ar_{\tilde y}[r] & \Psh_1\left(\Mfd\right),}$$
where $y$ denotes the Yoneda embedding.

\begin{dfn}
A weak presheaf of groupoids $\X:\Mfd^{op} \to \Gpd$ is a \textbf{stack} (or $1$-sheaf) if and only if for every manifold $M$ and every open cover $\mathcal{U}=\left(U_\alpha \hookrightarrow M\right),$ the canonical map
$$\Hom\left(y\left(M\right),\X\right) \to \Hom\left(\tilde y\left(M_\mathcal{U}\right),\X\right)$$ is an equivalence of groupoids. Denote the full subcategory of $\Psh_1\left(\Mfd\right)$ on the stacks by $\St\left(\Mfd\right)$. The canonical inclusion $$\St\left(\Mfd\right) \hookrightarrow \Psh_1\left(\Mfd\right)$$ admits a left adjoint $a$ called the \textbf{stackification} functor, and $a$ preserve finite limits.
\end{dfn}

\begin{rmk}
One can analogously define stacks on the large category $\widetilde{\Mfd},$ and the resulting bicategory is canonically equivalent to the above, by Remark \ref{rmk:compy}.
\end{rmk}

\begin{dfn}
We denote by $\left[\G\right]$ the associated stack on $\Mfd$, $a \circ \tilde y\left(\G\right)$, where $a$ is the stackification $2$-functor. The stack $\left[\G\right]$ is called the \textbf{stack completion} of the groupoid $\G$.
\end{dfn}

\begin{dfn}
A stack of groupoids $\X$ on $\Mfd$ is a \textbf{differentiable stack} if it is equivalent to $\left[\G\right]$ for some Lie groupoid $\G$.
\end{dfn}

\begin{dfn}
If $G \acts M$ is a smooth action of a Lie group on a manifold, then we denote by $M//G$ the stack completion of the action groupoid $G \ltimes M,$ and call it the \textbf{stacky-quotient} of $M$ by $G$.
\end{dfn}

\begin{prop}\label{prop: Morita1}
If $\varphi:\H \to \G$ is a smooth functor, then the induced map $$\left[\varphi\right]:\left[\H\right] \to \left[\G\right]$$ is an equivalence if and only if $\varphi$ is a Morita equivalence.
\end{prop}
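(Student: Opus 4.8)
The plan is to prove both implications simultaneously by invoking the standard characterization that a morphism of stacks is an equivalence if and only if it is fully faithful and an epimorphism (i.e.\ essentially surjective), and then to match the two clauses of Definition \ref{dfn:Morita} against these two properties of $\left[\varphi\right]$. Throughout I will use that the atlas maps $\G_0 \to \left[\G\right]$ and $\H_0 \to \left[\H\right]$ are epimorphisms of stacks, that every map from a manifold into a stack completion factors, locally on its source, through the atlas, and that the stackification functor $a$ preserves finite limits, so that fibered products of stack completions can be computed from the underlying groupoid data.

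First I would treat essential surjectivity. The key computation is the identification of the $2$-fibered product of the atlas $\G_0 \to \left[\G\right]$ with $\left[\varphi\right]:\left[\H\right] \to \left[\G\right]$. Pulling back further along the atlas $\H_0 \to \left[\H\right]$ and unwinding what a $2$-isomorphism in $\left[\G\right]$ between two maps landing in $\G_0$ amounts to — namely an arrow of $\G$, that is, a point of $\G_1$ — I would show that
$$\G_0 \times_{\left[\G\right]} \H_0 \simeq \G_1 \times_{\G_0} \H_0,$$
with the projection to $\G_0$ given by $t \circ pr_1$. Since epimorphisms between representable stacks are exactly the surjective submersions, and since the atlas $\G_1 \times_{\G_0} \H_0 \to \G_0 \times_{\left[\G\right]} \left[\H\right]$ is itself an epimorphism, $\left[\varphi\right]$ is an epimorphism precisely when $t \circ pr_1$ is a surjective submersion. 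This is exactly the essential surjectivity clause of Definition \ref{dfn:Morita}.

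Next I would treat fully faithfulness. A morphism of stacks is fully faithful precisely when, for every manifold $M$ and every pair of maps $a,b:M \to \left[\H\right]$, the induced map of sheaves of isomorphisms on $M$ is an isomorphism; because the atlas $\H_0 \to \left[\H\right]$ is an epimorphism, it suffices to test this when $a$ and $b$ factor through $\H_0$. For such a pair the sheaf of isomorphisms in $\left[\H\right]$ is represented by the pullback $M \times_{\H_0 \times \H_0} \H_1$ along $\left(s,t\right):\H_1 \to \H_0 \times \H_0$, and similarly for $\left[\G\right]$. Comparing the two, $\left[\varphi\right]$ is fully faithful if and only if the square
$$\xymatrix{\H_1 \ar[r]^-{\varphi} \ar[d]_-{\left(s,t\right)} &\G_1 \ar[d]^-{\left(s,t\right)}  \\
\H_0 \times \H_0 \ar[r]^-{\varphi \times \varphi} & \G_0 \times \G_0}$$
is cartesian, which is exactly the fully faithfulness clause of Definition \ref{dfn:Morita}.

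Combining the two translations completes the argument: $\varphi$ is a Morita equivalence if and only if both clauses hold, if and only if $\left[\varphi\right]$ is simultaneously an epimorphism and fully faithful, if and only if $\left[\varphi\right]$ is an equivalence. The main obstacle I anticipate lies in the two representability computations — identifying $\G_0 \times_{\left[\G\right]} \H_0$ with $\G_1 \times_{\G_0} \H_0$ and the sheaf of isomorphisms with the relevant pullback of $\H_1$. These require describing maps into a stack completion and the $2$-isomorphisms between them (for instance via the associated groupoid principal bundles or descent data) and justifying the reduction to the atlases, after which everything collapses to the manifold-level fibered products recorded in Definition \ref{dfn:Morita}.
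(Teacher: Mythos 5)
The paper itself states Proposition \ref{prop: Morita1} without proof (it is quoted as a known fact, with \cite{Dorette} cited nearby for the bicategory-of-fractions picture), so there is no internal argument to compare against; your proposal has to stand on its own. Its architecture is the standard one and much of it is fine: the characterization ``equivalence $=$ fully faithful $+$ epimorphism'' is correct for stacks, your treatment of full faithfulness (reduction to pairs of maps factoring through the atlas, identification of the isomorphism sheaf with $M \times_{\H_0 \times \H_0} \H_1$, and the specialization $M = \H_0 \times \H_0$ to recover the cartesian square) is sound, and the direction ``Morita $\Rightarrow$ equivalence'' goes through, since a surjective submersion admits local sections and hence the clause (i) of Definition \ref{dfn:Morita} really does make $\left[\varphi\right]$ an epimorphism.

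The genuine gap is in the converse direction, in the assertion that ``epimorphisms between representable stacks are exactly the surjective submersions.'' This is false. An epimorphism of stacks $N \to M$ between manifolds is a smooth map admitting local sections through every point of $M$, and such a map need not be a submersion: $f:\R^2 \to \R$, $f\left(x,y\right)=xy$, has the global smooth section $t \mapsto \left(t,1\right)$, hence is an epimorphism, but its differential vanishes at the origin. (Taking $\H$ and $\G$ to be the identity groupoids on $\R^2$ and $\R$ and $\varphi$ the functor induced by $f$, one gets $\left[\varphi\right]=f$ an epimorphism for which clause (i) of Definition \ref{dfn:Morita} fails; so your clause-by-clause matching ``epimorphism $\iff$ essential surjectivity clause'' is genuinely wrong, not merely unjustified — the Morita clause is strictly stronger than epimorphy.) Consequently your chain of equivalences breaks when passing from ``$\left[\varphi\right]$ is an equivalence'' back to the submersion condition on $t \circ pr_1$. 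The missing ingredient is that the atlas $\H_0 \to \left[\H\right]$ is not merely an epimorphism but is representable with surjective-submersion pullbacks: for any manifold $M \to \left[\H\right]$, the fibered product $M \times_{\left[\H\right]} \H_0$ is the underlying principal $\H$-bundle of that map, whose projection to $M$ is a surjective submersion by definition (Definition \ref{dfn:Gbundle}, Corollary \ref{cor:buncon}). If $\left[\varphi\right]$ is an equivalence, then $\H_0 \to \left[\H\right] \simeq \left[\G\right]$ inherits this property, so pulling back along $\G_0 \to \left[\G\right]$ and using your (correct) identification $\G_0 \times_{\left[\G\right]} \H_0 \simeq \G_1 \times_{\G_0} \H_0$ shows that $t \circ pr_1$ is a surjective submersion. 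With that substitution — atlas property in place of the false epimorphism claim — your argument closes up.
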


\begin{dfn}\label{dfn:etalestack}
An \textbf{\'etale differentiable stack} is a stack $\X$ of groupoids on $\Mfd$ such that $\X \simeq \left[\G\right]$ for $\G$ an \'etale Lie groupoid. We denote the full subcategory of $\St\left(\Mfd\right)$ on the \'etale differentiable stacks by $\Etdsd.$
\end{dfn}

\begin{rmk}
By Proposition \ref{prop:folgpd} and Proposition \ref{prop: Morita1}, it follows that a stack $\X$ of groupoids on $\Mfd$ is an \'etale differentiable stack if and only if $\X$ is equivalent to $\left[\G\right]$ for $\G$ a foliation Lie groupoid.
\end{rmk}

\begin{dfn}
An \textbf{orbifold} is a stack $\X$ of groupoids on $\Mfd$ such that $\X \simeq \left[\G\right]$ for $\G$ a proper foliation Lie groupoid.
\end{dfn}

\begin{dfn}\label{dfn:Hafstack}
Let $\Gamma^n$ denote the \'etale Lie groupoid from Example \ref{ex:Haefliger}. Then $\left[\Gamma^n\right]=:\HA_n$ is called the \textbf{$n$-dimensional Haefliger stack.}
\end{dfn}

Every smooth functor $\H \to \G$ induces a map $\left[\H\right] \to \left[\G\right]$ and the induced functor $$\Hom\left(\H,\G\right) \to \Hom\left(\left[\H\right],\left[\G\right]\right)$$ is full and faithful, but not in general essentially surjective. However, any morphism $$\left[\H\right] \to \left[\G\right]$$ arises from a chain $$\H \leftarrow \K \rightarrow \G,$$ with $\K \to \H$ a Morita equivalence. In fact, the class of Morita equivalences admits a calculus of fractions, and the bicategory of differentiable stacks is equivalent to the bicategory of fractions of Lie groupoids with inverted Morita equivalences. For details see \cite{Dorette}.

\subsection{Higher \'Etale Differentiable Stacks}

\begin{dfn}\label{dfn:ipsheave}
An $\i$-presheaf on manifolds is a functor $\X:\Mfd^{op} \to \iGpd$ from (the opposite of) the category of smooth manifolds to the $\icat$ of $\i$-groupoids. We denote the $\i$-category of such functors by $\Pshi\left(\Mfd\right).$ Given any weak presheaf of groupoids $\X,$ there is an associated $\i$-presheaf $$\Mfd^{op} \stackrel{\X}{\longrightarrow} \Gpd \hookrightarrow \iGpd,$$ and this assignment induces a canonical inclusion $$\Psh_1\left(\Mfd\right) \hookrightarrow \Pshi\left(\Mfd\right).$$
\end{dfn}

\begin{rmk}
Any $\i$-presheaf can be modeled by an ordinary functor $$X:\Mfd^{op} \to \Set^{\Delta^{op}}$$ from (the opposite of) the category of manifolds to the category of simplicial sets, such that each simplicial set $X\left(M\right)$ is a Kan complex. More precisely, there is a model category structure on simplicial presheaves whose fibrant objects satisfy the above condition, and whose associated $\i$-category is equivalent to $\Pshi\left(\Mfd\right).$
\end{rmk}

\begin{dfn}
We say that an $\i$-presheaf on manifolds $\X$ is an $\i$-stack (or $\i$-sheaf) if for any manifold $M$ and any open cover $\left(U_\alpha \hookrightarrow M\right),$ the canonical map $$\Hom\left(y\left(M\right),\X\right) \to \Hom\left(\tilde y\left(M_\mathcal{U}\right),\X\right)$$ is an equivalence of $\i$-groupoids. Denote the full subcategory of $\Pshi\left(\Mfd\right)$ on the $\i$-stacks by $\Shi\left(\Mfd\right).$ The canonical inclusion $$\Shi\left(\Mfd\right) \hookrightarrow \Pshi\left(\Mfd\right)$$ admits a left adjoint $a$ called the \textbf{$\i$-stackification} functor, and $a$ preserve finite limits.
\end{dfn}

By Proposition \ref{prop:ithesame}, one can define $\i$-stacks over $\widetilde{\Mfd}$ in a completely analogous way, and there is a canonical equivalence of $\i$-categories $\Shi\left(\widetilde{\Mfd}\right) \simeq \Shi\left(\Mfd\right).$

\begin{dfn}
Denote by $\widetilde{\Mfd}^{\et}$ the category of (possibly non-Hausdorff or $2^{nd}$ countable) smooth manifolds and their local diffeomorphisms. An \textbf{\'etale simplicial manifold} is a functor $$\G_\bullet:\Delta^{op} \to \widetilde{\Mfd}^{\et}.$$ Such an \'etale simplicial manifold is said to be $n$-dimensional if each $\G_i$ is an $n$-dimensional manifold.
\end{dfn}

An \'etale simplicial manifold $\G_\bullet$ yields a simplicial $\i$-sheaf
$$\tilde \G_\bullet:\Delta^{op} \to \Shi\left(\Mfd\right),$$
which is defined as the composite
$$\Delta^{op} \stackrel{\G_\bullet}{\longlonglongrightarrow} \widetilde{\Mfd}^{\et} \to \widetilde{\Mfd}  \stackrel{y}{\hookrightarrow} \Shi\left(\widetilde{\Mfd}\right) \simeq \Shi\left(\Mfd\right).$$
Denote by $$\left[\G_\bullet\right]:=\colim \tilde \G_\bullet.$$

\begin{rmk}\label{rmk:Gcolim}
If $\G_\bullet$ is the nerve of an \'etale Lie groupoid $\G$, then $\left[\G_\bullet\right]=\left[\G\right]$ is the \'etale differentiable stack associated to $\G.$ A general \'etale simplicial manifold should be thought of as a higher \'etale Lie groupoid. To see that  $\left[\G_\bullet\right]=\left[\G\right]$, it suffices to see that the colimit of $\G_\bullet$ in $\i$-presheaves is $\tilde y\left(\G\right),$ since $a$ preserves colimits, as it is a left adjoint. Colimits in $\i$-presheaves are computed object-wise, so it suffices to see that the nerve of a discrete groupoid is the homotopy colimit of itself, considered as a simplicial diagram of simplicial sets. This follows from the well known fact that the homotopy colimit of a bisimplicial set can be computed as its diagonal. 
\end{rmk}

\begin{dfn}
An $\i$-stack $\X$ on $\Mfd$ is an \textbf{\'etale differentiable $\i$-stack} if and only if it is equivalent to one of the form $\left[\G_\bullet\right]$ with $\G_\bullet$ an \'etale simplicial manifold. Denote the corresponding $\icat$ by $\Etds.$ Such an \'etale differentiable $\i$-stack is said to be $n$-dimensional if $\X \simeq \left[\G_\bullet\right],$ with $\G_\bullet$ $n$-dimensional. Denote the corresponding $\icat$ by $\Etdsn.$
\end{dfn}

\begin{rmk}
In \cite{higherme}, we define \'etale differentiable $\i$-stacks as $\i$-stacks arising as the functor of points of certain locally ringed $\i$-topoi. However, Proposition 6.1.2 of loc. cit. establishes an equivalence between these two definitions.
\end{rmk}

\begin{dfn}\label{dfn:prol}
Denote by $\Mfd^{\et}$ the category of manifolds and their local diffeomorphisms and by $$j:\Mfd^{\et} \to \Mfd$$ the canonical functor. There is a canonical restriction functor
$$j^*:\Shi\left(\Mfd\right) \to \Shi\left(\Mfd^{\et}\right)$$ and it admits a left adjoint $j_!$ called the \textbf{\'etale prolongation} functor. Similarly, denote by $$j_n:n\mbox{-}\Mfd^{\et} \to \Mfd$$ the canonical functor from $n$-manifolds and their local diffeomorphisms to all smooth manifolds and all smooth maps. The restriction functor $$j_n^*:\Shi\left(\Mfd\right) \to \Shi\left(\Mfd^{\et}\right)$$ likewise admits a left adjoint $j^n_!$ called the \textbf{$n$-dimensional \'etale prolongation functor.}
\end{dfn}

\begin{thm}\label{thm:5.3.9} (\cite{higherme} Theorem 5.3.9)
An $\i$-stack $\X$ on $\Mfd$ is an \'etale differentiable $\i$-stack if and only if it is in the essential image of the \'etale prolongation functor $j_!$. Similarly, an $\i$-stack $\X$ on $\Mfd$ is an $n$-dimensional \'etale differentiable $\i$-stack if and only if it is in the essential image of the $n$-dimensional \'etale prolongation functor $j^n_!$.
\end{thm}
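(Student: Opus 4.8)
The plan is to reduce the statement to two formal properties of the \'etale prolongation functor, together with a resolution of an arbitrary $\i$-sheaf on $\Mfd^{\et}$ by an \'etale simplicial manifold. I will argue the unindexed assertion; the $n$-dimensional case will be verbatim after replacing $\Mfd^{\et}$ by $n\mbox{-}\Mfd^{\et}$ and $j_!$ by $j^n_!$, using that a disjoint union of $n$-manifolds is again an $n$-manifold, so that every resolution produced below is automatically $n$-dimensional.

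First I would establish a lemma: $j_!$ preserves all colimits and satisfies $j_!\circ y_{\et}\simeq y\circ j$, where $y_{\et}$ and $y$ denote the Yoneda embeddings of $\Mfd^{\et}$ and $\Mfd$ into their respective $\i$-categories of sheaves. Colimit-preservation is immediate since $j_!$ is a left adjoint. For the second property, I would use that $j$ is continuous (local diffeomorphisms send open covers to open covers), so that restriction $j^*$ preserves sheaves and $j_!$ is the composite $a\circ\Lan_j\circ\iota$ of the inclusion of sheaves into presheaves, the presheaf-level left Kan extension, and sheafification. On a representable, $\Lan_j$ returns the presheaf represented by $j\left(M\right)$, and since both open-cover topologies are subcanonical this presheaf is already a sheaf; hence $j_!\left(y_{\et}\left(M\right)\right)\simeq y\left(M\right)$. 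The same computation shows that a disjoint union of manifolds represents the coproduct of the corresponding representable sheaves, on either site.

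Granting the lemma, the direction \'etale differentiable $\Rightarrow$ essential image will be immediate: if $\X\simeq\left[\G_\bullet\right]=\colim\tilde\G_\bullet$ for an \'etale simplicial manifold $\G_\bullet$, I set $F:=\colim_{\Delta^{op}}y_{\et}\left(\G_\bullet\right)\in\Shi\left(\Mfd^{\et}\right)$, formed in the sheaf $\i$-topos, and apply $j_!$; colimit-preservation and the lemma give $j_!F\simeq\colim_{\Delta^{op}}y\left(\G_\bullet\right)=\left[\G_\bullet\right]\simeq\X$. For the converse I suppose $\X\simeq j_!F$ and aim to produce an \'etale simplicial manifold $\G_\bullet$ with $F\simeq\colim_{\Delta^{op}}y_{\et}\left(\G_\bullet\right)$; granting that, the lemma once more yields $\X\simeq j_!F\simeq\colim_{\Delta^{op}}y\left(\G_\bullet\right)=\left[\G_\bullet\right]$, an \'etale differentiable $\i$-stack.

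The hard part will be exactly this last resolution. By density, every $\i$-sheaf is a colimit of representables indexed by the essentially small category of elements $\left(\Mfd^{\et}\right)_{/F}$, whose transition maps are all local diffeomorphisms over $F$; the task is to rewrite this colimit as a \emph{simplicial} one whose terms are genuine manifolds. When $F$ is $1$-truncated I expect this to follow directly from the bar construction, whose degree-$n$ term is the disjoint union $\coprod_{c_0\to\cdots\to c_n}M_{c_0}$ over chains of elements, with face and degeneracy maps that are componentwise identities or local diffeomorphisms; this should recover, for example, the Haefliger groupoid as the resolution of $\HA_n$. For a genuinely higher sheaf the naive bar construction fails, since its terms tensor representables with the homotopy types $F\left(M\right)$ and need not be representable, so I would instead resolve iteratively: start from an effective epimorphism onto $F$ out of a set-indexed coproduct of representables and pass to a \v{C}ech hypercover, using that fiber products of local diffeomorphisms over $F$ remain representable by manifolds because \'etale maps are stable under pullback. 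In every case the coproducts involved are small but typically infinite and non-second-countable, which is the reason the theory is set up over $\widetilde{\Mfd}$ and why the equivalence $\Shi\left(\widetilde{\Mfd}\right)\simeq\Shi\left(\Mfd\right)$ of Proposition \ref{prop:ithesame} is needed in order to transport the conclusion back to $\Mfd$.
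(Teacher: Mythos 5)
First, a point of reference: the paper contains no proof of this statement at all --- it is imported verbatim from \cite{higherme} (Theorem 5.3.9) --- so your proposal has to stand on its own. Your preliminary lemma is correct and correctly argued: $j_!$ preserves colimits as a left adjoint, and since both open-cover topologies are subcanonical and $j_!\simeq a\circ\Lan_j\circ\iota$, one gets $j_!\left(y_{\et}\left(M\right)\right)\simeq y\left(M\right)$ on representables. The implication ``\'etale differentiable $\Rightarrow$ essential image'' that you derive from it is sound, modulo the $\widetilde{\Mfd}$-versus-$\Mfd$ bookkeeping that you flag (the terms $\G_k$ of an \'etale simplicial manifold need not lie in the site, so one needs the \'etale analogue of Proposition \ref{prop:ithesame} before the lemma applies to them).

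The genuine gap is in the converse, at exactly the step you call the hard part, and it is twofold. First, your justification that the terms of the resolution are manifolds --- ``fiber products of local diffeomorphisms over $F$ remain representable by manifolds because \'etale maps are stable under pullback'' --- fails in precisely the generality you need it. Stability of \'etale maps under pullback concerns pullbacks of manifolds over \emph{manifolds}; over a general $\i$-sheaf $F$, the map $U\times_F U \to U\times U$ (with $U$ a coproduct of representables) has fibers given by path spaces of $F$, and these are $0$-truncated only when $F$ is $1$-truncated. So for a genuinely higher $F$ the iterated fiber products are not even truncated, let alone representable; in fact, if they \emph{were} representable, the plain \v{C}ech nerve of a single effective epimorphism would already finish the proof and no hypercover would be needed. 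The whole point of a hypercover is that each $U_n$ is a \emph{freshly chosen} coproduct of representables equipped with an effective epimorphism onto the (non-representable) matching object, so your stated reason for representability cannot be repaired --- the construction itself must change. Second, even once the hypercover is built correctly, you never address why its colimit is $F$: in a general $\i$-topos the comparison map from the colimit of a hypercover to its base is only $\i$-connective, not an equivalence. This is where the substantive input enters: one must invoke hypercompleteness of $\Shi\left(\nMfd^{\et}\right)$ (\cite{higherme}, Theorem 5.3.6), the very fact the paper alludes to in the remark following Theorem \ref{thm:main}. Alternatively, your dismissal of the bar construction for higher $F$ is too hasty: in the quasicategorical model the simplices of the slice $\left(\nMfd^{\et}\right)_{/F}$ form honest \emph{sets}, so the $\i$-categorical simplicial replacement of the tautological colimit (the category-of-simplices decomposition of \cite{htt}, Section 4.2.3) exhibits an arbitrary $F$ as a $\Delta^{op}$-colimit of set-indexed coproducts of representables with local diffeomorphisms as structure maps, bypassing hypercompleteness entirely. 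As written, however, your resolution neither has representable terms nor comes with a reason to converge, so the converse direction is incomplete.
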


The concept of a local diffeomorphism of manifolds extends to \'etale differentiable $\i$-stacks. This is done most naturally by considering such stacks as the functor of points of smooth $\i$-\'etendues and then appealing to the notion of a local homeomorphism of structured $\i$-topoi, however this can be defined in another way:

\begin{dfn}
A morphism $f:\X \to \Y$ between \'etale differentiable $\i$-stacks is a local diffeomorphism if it is in the essential image of $j_!$.
\end{dfn}

\begin{rmk}\label{rmk:localdiffeo}
For $\X$ and $\Y$ \'etale differentiable stacks with $\Y \simeq \left[\H\right],$ $$f:\X \to \Y$$ is a local diffeomorphism if and only if there exists an \'etale Lie groupoid $\G$ with $\left[\G\right] \simeq \X$ and a smooth functor $F:\G \to \H$ such that $F_0:\G_0 \to \H_0$ is a local diffeomorphism, such that $f$ is equivalent to $\left[F\right].$
\end{rmk}

The $n$-dimensional Haefliger stack of Definition \ref{dfn:Hafstack} enjoys a special universal property with respect to local diffeomorphisms:

\begin{thm} (\cite{higherme} Theorem 6.1.6) \label{thm:6.1.6}
Denote by $\Etdsn^{\et}$ the $\i$-category consisting of $n$-dimensional \'etale differentiable $\i$-stacks and only the local diffeomorphisms between them. Then $\HA_n$ is the terminal object in this $\i$-category, that is, for any other $n$-dimensional \'etale differentiable $\i$-stacks $\X,$ the $\i$-groupoid $\Hom^{\et}\left(\X,\HA_n\right)$ of local diffeomorphisms from $\X$ to $\HA_n$ is contractible.
\end{thm}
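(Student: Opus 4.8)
The plan is to identify the Haefliger stack $\HA_n$ with the image under the $n$-dimensional \'etale prolongation functor $j^n_!$ of the terminal object of $\Shi\left(\nMfd^{\et}\right)$, and then to invoke the equivalence $\Shi\left(\nMfd^{\et}\right) \simeq \Etdsn^{\et}$ recorded in the introduction (see \cite{higherme}). Write $\mathbf 1$ for the terminal $\i$-sheaf on $\nMfd^{\et}$, the sheafification of the presheaf constant at a point. By Theorem \ref{thm:5.3.9} every object of $\Etdsn^{\et}$ is of the form $\X \simeq j^n_!F$, and since the cited equivalence is implemented by $j^n_!$, a local diffeomorphism $j^n_!F \to \HA_n$ is the same datum as a morphism of $\i$-sheaves $F \to \mathbf 1$. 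As an equivalence of $\i$-categories preserves terminal objects, it therefore suffices to establish the single identity $\HA_n \simeq j^n_!\left(\mathbf 1\right)$: granting it, $\Hom^{\et}\left(\X,\HA_n\right) \simeq \Hom_{\Shi\left(\nMfd^{\et}\right)}\left(F,\mathbf 1\right)$ is contractible by terminality of $\mathbf 1$.

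To prove $\HA_n \simeq j^n_!\left(\mathbf 1\right)$, I would realize $\mathbf 1$ as a colimit of representables indexed by the nerve of the Haefliger groupoid, recalling from Example \ref{ex:Haefliger} that $N\left(\Gamma^n\right)_\bullet$ is an $n$-dimensional \'etale simplicial manifold. Consider the canonical map $p:y\left(\RR^n\right) \to \mathbf 1$. It is an effective epimorphism in the $\i$-topos $\Shi\left(\nMfd^{\et}\right)$: any $n$-manifold $M$ is covered by charts $U_\alpha \cong \RR^n$, and the resulting covering family of local diffeomorphisms is exactly what is needed for the unique section of $\mathbf 1$ to lift to $p$ locally. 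Because $\mathbf 1$ is terminal, the $k$-th term of the \v{C}ech nerve of $p$ is the $\left(k+1\right)$-fold product $y\left(\RR^n\right)^{\times\left(k+1\right)}$, and the crucial geometric observation is that this product is represented by the $n$-manifold $N\left(\Gamma^n\right)_k$ of $k$-fold composable germs: a $\left(k+1\right)$-tuple $\left(f_0,\dots,f_k\right)$ of local diffeomorphisms $M \to \RR^n$ is the same datum as a local diffeomorphism $M \to N\left(\Gamma^n\right)_k$ whose value at a point is the composable string whose $i$-th arrow is the germ of $f_i \circ f_{i-1}^{-1}$. Matching the face and degeneracy maps under this correspondence identifies the whole \v{C}ech nerve with the levelwise Yoneda image of $N\left(\Gamma^n\right)_\bullet$, so effectivity of groupoid objects in the $\i$-topos yields $\mathbf 1 \simeq \colim y\left(N\left(\Gamma^n\right)_\bullet\right)$.

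Now $j^n_!$ is a left adjoint, hence preserves colimits, and it carries each representable $y\left(U\right)$ on $\nMfd^{\et}$ to the representable $y\left(U\right)$ on $\Mfd$. Applying it to the presentation above gives $j^n_!\left(\mathbf 1\right) \simeq \colim y\left(N\left(\Gamma^n\right)_\bullet\right)$, the colimit now taken in $\Shi\left(\Mfd\right)$; but by the definition of $\left[\G_\bullet\right]$ together with Remark \ref{rmk:Gcolim} and Definition \ref{dfn:Hafstack} this colimit is precisely $\left[\Gamma^n\right] = \HA_n$. This is the same mechanism by which \cite{prol} identifies $\left[\Gamma^{Sp}_{2q}\right]$ and $\left[\cR\Gamma^q\right]$ with $j^{2q}_!\left(\mathcal{S}_{2q}\right)$ and $j^{q}_!\left(\mathcal{R}_q\right)$; the plain Haefliger groupoid corresponds to the \emph{trivial} transverse structure, namely the terminal sheaf $\mathbf 1$.

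I expect the main obstacle to be the central step of the second paragraph: verifying that $p:y\left(\RR^n\right)\to\mathbf 1$ is an effective epimorphism in the precise $\i$-categorical sense, and that its \v{C}ech nerve coincides with the Yoneda image of $N\left(\Gamma^n\right)_\bullet$ as a \emph{simplicial} object, compatibly with all face and degeneracy maps. Both assertions amount to the statement that the descent data for the terminal sheaf on $n$-manifolds-with-local-diffeomorphisms is exactly the groupoid of germs of local self-diffeomorphisms of $\RR^n$; once this geometric identification is secured, preservation of colimits and of representables by $j^n_!$ makes the remainder of the argument formal.
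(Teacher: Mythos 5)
The paper itself contains no proof of this statement: it is imported verbatim from \cite{higherme} (Theorem 6.1.6), and the only internal logic surrounding it runs in the \emph{opposite} direction — the paper uses Theorem \ref{thm:6.1.6} together with Theorem \ref{thm:6.1.3} to deduce Corollary \ref{cor:ex1}, i.e. $\HA_n \simeq j^n_!\left(1\right)$. Your proposal inverts that deduction: you prove $\HA_n \simeq j^n_!\left(\mathbf{1}\right)$ directly and then recover terminality from Theorem \ref{thm:6.1.3}, since an equivalence of $\i$-categories preserves terminal objects and $\mathbf{1}$ is terminal in $\Shi\left(\nMfd^{\et}\right)$. Within this paper that is a legitimate, non-circular route, because Theorems \ref{thm:5.3.9} and \ref{thm:6.1.3} are independently cited results and your direct argument nowhere invokes Theorem \ref{thm:6.1.6} (if spliced in, Corollary \ref{cor:ex1} would then follow from your intermediate identity rather than the other way around). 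The geometric core is also correct and is exactly the right mechanism: $y\left(\RR^n\right) \to \mathbf{1}$ is an effective epimorphism because every $n$-manifold has an atlas of charts; fiber products over the terminal object are products; and a $\left(k+1\right)$-tuple of local diffeomorphisms $f_0,\dots,f_k: M \to \RR^n$ is the same thing as a local diffeomorphism $M \to N\left(\Gamma^n\right)_k$ recording the germs of $f_i \circ f_{i-1}^{-1}$, compatibly with faces and degeneracies. This \v{C}ech-nerve computation makes explicit why the Haefliger stack is the avatar of the terminal sheaf.

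Two points need patching, both fillable. First, for $k \geq 1$ the manifolds $N\left(\Gamma^n\right)_k$ are non-Hausdorff, hence are \emph{not} objects of the site $\nMfd^{\et}$ (whose objects, per Definition \ref{dfn:prol}, live in $\Mfd$); so the \v{C}ech nerve terms are not representable in the strict sense, and your assertion that $j^n_!$ carries representables to representables does not apply to them directly. What you actually need is $j^n_!\left(y^{\et}\left(X\right)\right) \simeq y\left(X\right)$ for $X$ a possibly non-Hausdorff $n$-manifold, where $y^{\et}\left(X\right)$ denotes the sheaf of local diffeomorphisms into $X$; this can be supplied either by covering $X$ by Hausdorff charts and using that both $j^n_!$ and $y$ preserve the resulting \v{C}ech colimits, or by applying the second clause of Theorem \ref{thm:6.1.3} to $X$ itself regarded as an \'etale differentiable stack. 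Second, a small foundational point: to transport terminality in $\Etdsn^{\et}$ across the equivalence $\HA_n \simeq j^n_!\left(\mathbf{1}\right)$, you should note that equivalences of \'etale stacks are themselves local diffeomorphisms in the sense of the paper (an equivalence $e: j_!F \to \Y$ is equivalent, as an object of the arrow category, to $j_!\left(\operatorname{id}_F\right)$), so the equivalence really lives inside $\Etdsn^{\et}$. Neither issue threatens the strategy; with these repairs your argument is a sound and essentially self-contained proof, relative only to the cited equivalence of Theorem \ref{thm:6.1.3}.
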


Given an $n$-dimensional \'etale differentiable $\i$-stacks $\X,$ it determines an $\i$-stack $y^{\et}\left(\X\right)$ on $\nMfd^{\et}$ by assigning to each $n$-manifold $M$ the $\i$-groupoid $\Hom^{\et}\left(M,\X\right)$ of local diffeomorphisms from $M$ to $\X.$ The assignment $\X \mapsto y^{\et}\left(\X\right)$ is not functorial with respect to all maps of stacks, but it is with respect to local diffeomorphisms and ones gets an induced functor
$$y^{\et}:\Etdsn^{\et} \to \Shi\left(\nMfd^{\et}\right).$$

\begin{thm} (\cite{higherme} Theorem 6.1.3) \label{thm:6.1.3}
The above functor $y^{\et}$ is an equivalence of $\i$-categories. Moreover, given an $n$-dimensional \'etale stack $\X,$ there is a canonical equivalence $\X \simeq j^n_! y^{\et}\left(\X\right).$
\end{thm}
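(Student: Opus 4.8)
The plan is to exhibit $j^n_!$ as a quasi-inverse to $y^{\et}$, so that the second assertion $\X \simeq j^n_! y^{\et}\left(\X\right)$ becomes the counit of the resulting adjoint equivalence. First I would record that the two functors fit into an adjunction $j^n_! \dashv y^{\et}$ between $\Shi\left(\nMfd^{\et}\right)$ and $\Etdsn^{\et}$: for a sheaf $F$ and an $n$-dimensional \'etale stack $\X$, a local diffeomorphism $j^n_! F \to \X$ should correspond to a map of sheaves $F \to y^{\et}\left(\X\right)$. On a representable $F = y\left(N\right)$ this is immediate, since $j^n_! y\left(N\right) = N$ and both sides compute $\Hom^{\et}\left(N,\X\right) = y^{\et}\left(\X\right)\left(N\right)$. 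The general case follows because every sheaf is a colimit of representables, $j^n_!$ preserves colimits (it is a left adjoint), and both $\Hom^{\et}\left(j^n_!\left(-\right),\X\right)$ and $\Hom\left(-,y^{\et}\left(\X\right)\right)$ are contravariant functors of $F$ sending colimits to limits; agreement on representables therefore forces agreement everywhere.

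Essential surjectivity of $j^n_!$ onto $\Etdsn^{\et}$ is exactly Theorem~\ref{thm:5.3.9}: every $n$-dimensional \'etale stack is of the form $j^n_! F$, and $j^n_!$ carries morphisms of sheaves to local diffeomorphisms. It then remains to prove that $j^n_!$ is fully faithful, i.e. that the unit $\eta_F:F \to y^{\et}\left(j^n_! F\right)$ is an equivalence for every $F$. By the Yoneda lemma this is equivalent to the assertion that for every $n$-manifold $M$,
\[
F\left(M\right) \;\xrightarrow{\;\sim\;}\; \Hom^{\et}\left(M,j^n_! F\right),
\]
that is, that local diffeomorphisms from $M$ into the prolongation $j^n_! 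F$ are classified by the sections $F\left(M\right)$. For $F$ representable this is the definition of $\nMfd^{\et}$ together with $j^n_! y\left(N\right)=N$; the work is to propagate it to an arbitrary $F$.

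Writing $\X = j^n_! F$ as a colimit $\colim \G_\bullet$ of an \'etale simplicial manifold (all of whose structure maps are local diffeomorphisms, cf. Remark~\ref{rmk:Gcolim}), the heart of the argument is a descent lemma: a local diffeomorphism $f:M \to \X$ is determined by \'etale-local data. Concretely, because $f$ is a local diffeomorphism into an \'etale stack, the characterization in Remark~\ref{rmk:localdiffeo} produces an open cover $\left(U_\alpha \hookrightarrow M\right)$ on which $f$ lifts to local diffeomorphisms $U_\alpha \to \G_0$ into a chart. Such lifts are precisely sections of $F$ over $U_\alpha$; on overlaps they agree as sections of $F$, since the gluing data for $\X$ is the same simplicial data that presents $F$ as the colimit of $y\left(\G_\bullet\right)$ in $\Shi\left(\nMfd^{\et}\right)$; and the sheaf condition on $F$ glues them to a unique element of $F\left(M\right)$. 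The inverse assignment reconstructs a local diffeomorphism $M \to \X$ from a section of $F$ by the same local-to-global procedure, and one checks the two constructions are mutually inverse equivalences of $\i$-groupoids, naturally in $M$. This shows $\eta_F$ is an equivalence.

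The main obstacle is exactly this descent lemma — that local diffeomorphisms into an \'etale stack form an \'etale sheaf on $\nMfd^{\et}$ that can be reconstructed chart by chart. The delicate point is the higher coherence bookkeeping: the local lifts to $\G_0$, their identifications over $\G_1$, and the higher identifications among those, must be organized into a genuine descent datum for the simplicial object $\G_\bullet$ matching the colimit presentation of $F$, and this is where the universal property of the Haefliger stack (Theorem~\ref{thm:6.1.6}), which packages $\nMfd^{\et}$ as the canonical site of charts for the terminal object $\HA_n$, is the natural organizing tool. Once $\eta$ is an equivalence, $j^n_!$ is a fully faithful and essentially surjective functor, hence an equivalence with quasi-inverse $y^{\et}$; the counit $j^n_! y^{\et}\left(\X\right) \xrightarrow{\;\sim\;} \X$ is then automatic, yielding the canonical equivalence $\X \simeq j^n_! y^{\et}\left(\X\right)$.
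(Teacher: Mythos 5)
First, a caveat on the comparison: this paper does not prove Theorem \ref{thm:6.1.3} at all --- it is imported verbatim from \cite{higherme}, where it is established using the identification of \'etale differentiable $\i$-stacks with functors of points of structured $\i$-topoi (smooth $\i$-\'etendues) and of local diffeomorphisms with \'etale morphisms of such; so there is no in-paper argument to measure yours against, and your proposal must stand on its own. Its architecture --- essential surjectivity of $j^n_!$ from Theorem \ref{thm:5.3.9}, an adjunction $j^n_! \dashv y^{\et}$, and a reduction to showing the unit $F \to y^{\et}\left(j^n_!F\right)$ is an equivalence --- is sensible, but both load-bearing steps contain genuine gaps.

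Gap (1): your reduction ``agreement on representables forces agreement everywhere'' requires that $F \mapsto \Hom^{\et}\left(j^n_!F,\X\right)$ send colimits in $\Shi\left(\nMfd^{\et}\right)$ to limits. For the full mapping space $\Hom\left(j^n_!F,\X\right)$ this is automatic because $j^n_!$ preserves colimits, but $\Hom^{\et}$ is the sub-$\i$-groupoid of local diffeomorphisms, so you additionally need: a morphism $\colim_i \Y_i \to \X$ is a local diffeomorphism as soon as every composite $\Y_i \to \X$ is. With the definition of local diffeomorphism this paper actually uses (a morphism lying in the essential image of $j_!$), this locality-on-the-source property is nothing like a formality --- it is essentially the content of the theorem you are trying to prove, and your proposal nowhere addresses it. Gap (2): the ``descent lemma'' carrying the unit computation is asserted rather than proven. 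The local lifts $U_\alpha \to \G_0$ are produced by appeal to Remark \ref{rmk:localdiffeo}, which is stated only for \'etale differentiable $1$-stacks presented by \'etale Lie groupoids, not for higher \'etale stacks presented by \'etale simplicial manifolds; and the ``higher coherence bookkeeping'' that you yourself flag as the delicate point is exactly where all the work lies --- gesturing at it does not discharge it. Worse, your proposed organizing tool, Theorem \ref{thm:6.1.6}, is logically downstream of Theorem \ref{thm:6.1.3}: the universal property of $\HA_n$ is most naturally obtained by transporting the terminal object of $\Shi\left(\nMfd^{\et}\right)$ through the equivalence $y^{\et}$ (this paper's Corollary \ref{cor:ex1} combines precisely these two results), so invoking it here risks circularity unless you supply an independent proof of it.
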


\begin{cor} \label{cor:ex1}
The $n$-dimensional Haefliger stack $\HA_n$ can be described as $$\HA_n \simeq j^n_! \left(1\right),$$ where $1$ is the terminal sheaf.
\end{cor}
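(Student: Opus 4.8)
The plan is to obtain this as a formal consequence of the two immediately preceding theorems about the Haefliger stack, Theorem \ref{thm:6.1.6} and Theorem \ref{thm:6.1.3}. First I would recall from Theorem \ref{thm:6.1.6} that $\HA_n$ is the terminal object of the $\i$-category $\Etdsn^{\et}$ of $n$-dimensional \'etale differentiable $\i$-stacks and their local diffeomorphisms: by that theorem, for every such $\X$ the $\i$-groupoid $\Hom^{\et}\left(\X,\HA_n\right)$ is contractible, which is exactly the statement that $\HA_n$ is terminal.

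Next I would transport this terminality across the equivalence of $\i$-categories $$y^{\et}:\Etdsn^{\et} \to \Shi\left(\nMfd^{\et}\right)$$ supplied by Theorem \ref{thm:6.1.3}. Since any equivalence of $\i$-categories preserves all limits, and in particular terminal objects, the image $y^{\et}\left(\HA_n\right)$ is a terminal object of $\Shi\left(\nMfd^{\et}\right)$. The terminal object of the $\i$-category of $\i$-sheaves on any site is the terminal sheaf $1$ (the sheaf whose value on every object is contractible), so this identifies $y^{\et}\left(\HA_n\right) \simeq 1$.

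Finally I would feed this back through the \emph{moreover} clause of Theorem \ref{thm:6.1.3}, which gives a canonical equivalence $\X \simeq j^n_! \, y^{\et}\left(\X\right)$ for every $n$-dimensional \'etale differentiable $\i$-stack $\X$. Taking $\X = \HA_n$ and substituting the identification above yields $$\HA_n \simeq j^n_! \, y^{\et}\left(\HA_n\right) \simeq j^n_!\left(1\right),$$ which is precisely the asserted description. There is no substantive obstacle here: the only points requiring care are the formal fact that an equivalence of $\i$-categories preserves terminal objects, and the observation that the terminal object of $\Shi\left(\nMfd^{\et}\right)$ is the terminal sheaf $1$; both are routine once Theorems \ref{thm:6.1.6} and \ref{thm:6.1.3} are in hand.
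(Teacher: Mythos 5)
Your proposal is correct and follows exactly the paper's own argument: the paper likewise deduces from Theorem \ref{thm:6.1.6} that $y^{\et}\left(\HA_n\right)$ is the terminal sheaf and then applies the equivalence $\X \simeq j^n_! \, y^{\et}\left(\X\right)$ of Theorem \ref{thm:6.1.3}. The only cosmetic difference is that the paper reads off terminality of $y^{\et}\left(\HA_n\right)$ directly from the contractibility of the hom-$\i$-groupoids $\Hom^{\et}\left(M,\HA_n\right)$, whereas you transport terminality through the equivalence $y^{\et}$; these are the same observation.
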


\begin{proof}
By Theorem \ref{thm:6.1.6}, $y^{\et}\left(\HA_n\right)$ must be terminal. The result now follows from Theorem \ref{thm:6.1.3}.
\end{proof}

\begin{ex}\label{ex:symprol}
Consider the functor $$\mathcal{S}_{2n}:\left(2n\mbox{-}\Mfd^{\et}\right)^{op} \to \Set$$ sending a $2n$-dimensional manifold $M$ to the set of symplectic forms on $M$. Note that this is not even a functor on $2n\mbox{-}\Mfd$, but it is functorial with respect to local diffeomorphisms, and in fact is a sheaf. By Example 2 of Section 3.2.2 of \cite{prol}, $$j^{2n}_!\left(\mathcal{S}_{2n}\right)\simeq \left[\Gamma^{Sp}_{2n}\right]$$ where $\Gamma^{Sp}_{2n}$ is the Lie groupoid from Example \ref{ex:sympgpd}.
\end{ex}

\begin{ex}\label{ex:riemprol}
Let $\mathcal{R}_n:\left(\nMfd^{\et}\right)^{op} \to \Set$ be the functor which assigns each $n$-manifold $M$ its set of Riemannian metrics. Then similarly to the above example, although this is not functorial with respect to all maps, it is functorial with respect to local diffeomorphisms and is a sheaf. By Example 3 of Section 3.2.2 of \cite{prol}, we have that $$j^n_!\left(\mathcal{R}_n\right)\simeq \left[\cR\Gamma^n\right].$$
\end{ex}

\section{The Homotopy Type of a Higher Stack}\label{sec:homotopy type}

\subsection{The Fundamental Infinity Groupoid Functor}\label{sec:funigpd}
We start this section with a very important proposition. It is not due to us. It follows as a formal corollary from Proposition 8.3 of \cite{dugger}, and also from Proposition 4.3.29 and Corollary 4.4.28 of \cite{dcct}. However, we will give a simpler and more direct argument. We will also derived a slightly stronger statement by similar techniques.

\begin{prop}\label{prop:pi}
There is a colimit preserving functor $$\Pi_\i:\Shi\left(\Mfd\right) \to \iGpd$$ which sends every manifold $M$ in $\Mfd$ to its underlying homotopy type. For a given $\i$-stack $F$, we call $\Pi_\i\left(F\right)$ its \textbf{fundamental $\i$-groupoid}.
\end{prop}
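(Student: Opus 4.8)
The plan is to construct $\Pi_\i$ in two stages: first as a colimit-preserving functor out of the $\i$-category of presheaves, and then to descend it along the sheafification localization. First I would fix the \emph{homotopy type} functor $\operatorname{ht}\colon \Mfd \to \iGpd$ sending a manifold $M$ to its singular $\i$-groupoid $\operatorname{Sing}(M)$; as $\Mfd$ is an ordinary category this presents no difficulty. Since $\Pshi(\Mfd) = \Fun(\Mfd^{op}, \iGpd)$ is the free cocompletion of $\Mfd$, the functor $\operatorname{ht}$ admits an essentially unique colimit-preserving extension $\widehat{\Pi}_\i\colon \Pshi(\Mfd) \to \iGpd$ with $\widehat{\Pi}_\i \circ y \simeq \operatorname{ht}$, namely the left Kan extension of $\operatorname{ht}$ along the Yoneda embedding $y$.

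The second stage is to verify that $\widehat{\Pi}_\i$ inverts every local equivalence, so that it factors through the sheafification $a\colon \Pshi(\Mfd) \to \Shi(\Mfd)$. Because the sheaf condition here is \v{C}ech descent along the \v{C}ech groupoids $M_\mathcal{U}$, the class of local equivalences is the smallest strongly saturated class containing the \v{C}ech covering maps $s_\mathcal{U}\colon \colim \check{C}(\mathcal{U})_\bullet \to y(M)$ attached to open covers $\mathcal{U} = (U_\alpha \hookrightarrow M)$, where $\check{C}(\mathcal{U})_n = \coprod_{\alpha_0, \dots, \alpha_n} y(U_{\alpha_0} \cap \dots \cap U_{\alpha_n})$. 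Since the class of morphisms inverted by the colimit-preserving functor $\widehat{\Pi}_\i$ is itself strongly saturated, it suffices to show each $\widehat{\Pi}_\i(s_\mathcal{U})$ is an equivalence. As $\widehat{\Pi}_\i$ preserves colimits and coproducts and restricts to $\operatorname{ht}$ on representables, $\widehat{\Pi}_\i(s_\mathcal{U})$ identifies with the canonical map $\hocolim \operatorname{ht}(\check{C}(\mathcal{U})_\bullet) \to \operatorname{ht}(M)$ from the homotopy colimit of the (topological) \v{C}ech nerve to the homotopy type of $M$.

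The hard part, and the only genuinely geometric input, is therefore to check that this last map is a weak equivalence, i.e.\ that for \emph{every} open cover the homotopy colimit of its \v{C}ech nerve recovers the homotopy type of $M$. I would establish this classical fact either by reducing to a good cover and invoking the nerve theorem, or, to avoid refining the cover, by directly showing that $\operatorname{Sing}$ carries the augmented \v{C}ech diagram to a homotopy colimit diagram of Kan complexes (see e.g.\ \cite{dugger}). Granting this, $\widehat{\Pi}_\i$ inverts all the generating local equivalences, hence all local equivalences.

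Finally, the universal property of the localization $a$ then yields an essentially unique colimit-preserving functor $\Pi_\i\colon \Shi(\Mfd) \to \iGpd$ with $\widehat{\Pi}_\i \simeq \Pi_\i \circ a$; colimit-preservation of $\Pi_\i$ follows since colimits in $\Shi(\Mfd)$ are computed by sheafifying colimits of presheaves. To finish, I would observe that the open-cover topology is subcanonical, so each representable $y(M)$ is already a sheaf, whence $a(y(M)) \simeq y(M)$ and $\Pi_\i(y(M)) \simeq \widehat{\Pi}_\i(y(M)) \simeq \operatorname{ht}(M)$ is the homotopy type of $M$, as desired.
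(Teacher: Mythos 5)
Your proposal is correct and takes essentially the same approach as the paper: both construct the functor by left Kan extension of the homotopy-type functor along the Yoneda embedding, and both descend to $\Shi\left(\Mfd\right)$ using the same geometric input, namely that the homotopy colimit of the \v{C}ech nerve of any open cover is weakly equivalent to the manifold itself (Dugger--Isaksen). The only difference is the formally dual packaging of the descent step: the paper verifies that the right adjoint of the Kan extension takes values in $\i$-sheaves and restricts the adjunction, whereas you verify that the Kan extension inverts the generating local equivalences and invoke the universal property of the localization; these two conditions are equivalent by adjunction.
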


First we will need an easy lemma:

\begin{lem}\label{lem:h}
There is a canonical functor $\Top \to \iGpd$ from the category of topological spaces to the $\i$-category of $\i$-groupoids sending each space $X$ to its weak homotopy type.
\end{lem}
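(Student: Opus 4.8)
The plan is to realize $\iGpd$ as the $\i$-categorical localization of simplicial sets and to build the functor from the total singular complex. Concretely, I would begin by recalling the standard presentation of the $\i$-category of $\i$-groupoids: writing $W$ for the class of weak homotopy equivalences of simplicial sets, the Kan--Quillen model structure on $\Sset$ exhibits $\iGpd$ as the $\i$-categorical localization $\Sset\left[W^{-1}\right]$, with localization functor $\gamma:\Sset \to \Sset\left[W^{-1}\right] \simeq \iGpd$. (Equivalently, $\iGpd$ is the homotopy coherent nerve of the simplicial category of Kan complexes, and $\gamma$ sends a simplicial set to a fibrant replacement.)

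Next I would invoke the total singular complex functor $\operatorname{Sing}:\Top \to \Sset$, an ordinary functor of $1$-categories sending a space $X$ to the Kan complex $\operatorname{Sing}\left(X\right)$. The desired functor is then the composite
$$\Top \xrightarrow{\operatorname{Sing}} \Sset \xrightarrow{\gamma} \iGpd,$$
where one first passes to nerves, so that $\operatorname{Sing}$ induces a functor of $\i$-categories $N\left(\Top\right) \to N\left(\Sset\right)$, and then composes with $\gamma$. To verify the effect on objects, I would note that $\operatorname{Sing}\left(X\right)$ is by definition a simplicial set whose homotopy type is the weak homotopy type of $X$, and that $\gamma$ carries each simplicial set to the object of $\iGpd$ it represents; hence the composite sends $X$ to its weak homotopy type, as required.

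I expect the only point requiring care --- rather than a genuine obstacle --- is the bookkeeping that lets one compose the (ordinary, hence on nerves strict) functor out of $\Top$ with the $\i$-functor $\gamma$; this is automatic once everything is phrased in the language of quasicategories. The remaining ingredients (that $\Sset\left[W^{-1}\right] \simeq \iGpd$, and that $\operatorname{Sing}$ lands in Kan complexes representing weak homotopy types) are entirely standard and can simply be cited. As an alternative packaging, since $\operatorname{Sing}$ both preserves and reflects weak equivalences, one may instead observe that $\left(\Top,W_{\Top}\right)$ itself presents $\iGpd$, so that $\iGpd \simeq \Top\left[W_{\Top}^{-1}\right]$ and the functor is literally the localization $\Top \to \Top\left[W_{\Top}^{-1}\right]$, which tautologically sends $X$ to its weak homotopy type. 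Either formulation yields the stated functor, and I would present the singular-complex route as the primary construction since it makes the identification $X \mapsto \operatorname{Sing}\left(X\right)$ with an explicit model of the weak homotopy type manifest.
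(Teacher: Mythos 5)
Your proposal is correct, but your primary construction is packaged differently from the paper's. The paper defines the functor directly as the $\i$-categorical localization $h:\Top \to \Top\left[W^{-1}\right]$, invoking Section 2.1 of \cite{hinich} for existence and Proposition 2.2.1 of op.\ cit.\ to identify $\Top\left[W^{-1}\right]$ with the $\i$-category associated to the standard Quillen model structure on $\Top$, i.e.\ with $\iGpd$; your route instead composes $\operatorname{Sing}:\Top \to \Sset$ with the localization $\Sset \to \Sset\left[W^{-1}\right] \simeq \iGpd$. Both are valid, and the two functors agree up to equivalence. What the paper's packaging buys is that $h$ comes with its universal property built in: this is precisely what is exploited in Remark \ref{rmk:colimok} (via the proof of Corollary 4.2.4.8 of \cite{htt}) to conclude that $h$ carries homotopy colimits in $\Top$ to colimits in $\iGpd$, which is the property the rest of Section \ref{sec:homotopy type} actually needs; with your $\operatorname{Sing}$-based definition you would first have to identify your composite with the localization functor before harvesting that property (true, but an extra step). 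What your packaging buys is an explicit model for the value on objects, making the assignment $X \mapsto \operatorname{Sing}\left(X\right)$ manifest. One caution about your alternative formulation: that $\left(\Top,W_{\Top}\right)$ presents $\iGpd$ does not follow merely from $\operatorname{Sing}$ preserving and reflecting weak equivalences --- that alone would not make the induced functor $\Top\left[W_{\Top}^{-1}\right] \to \Sset\left[W^{-1}\right]$ an equivalence; one also needs that the unit $K \to \operatorname{Sing}\left|K\right|$ and counit $\left|\operatorname{Sing}\left(X\right)\right| \to X$ are weak equivalences (the full Quillen equivalence), or a citation doing equivalent work, such as the paper's appeal to Proposition 2.2.1 of \cite{hinich}.
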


\begin{proof}
Let $\Top$ denote the category of topological spaces, and let $W$ denote the class of weak homotopy equivalences. There exists an $\icat$ $\C$ together with a morphism $h:\Top \to \C,$ universal with the property that for any $\i$-category $\sD,$ composition with $h$ induces an equivalences of $\i$-categories $$\Fun\left(\C,\sD\right) \to \Fun_W\left(\Top,\sD\right)$$ between the $\icat$ of functors $\C \to \sD$ and the $\i$-category of functors $\Top \to \sD$ which send each weak homotopy equivalence to an equivalence in $\sD.$ The existence follows from Section 2.1 of \cite{hinich}. By Proposition 2.2.1 of op. cit., it follows that $\C$ must be the $\i$-category associated to the standard Quillen model structure on $\Top,$ which is none other than $\iGpd.$ We conclude that there is a canonical functor $h:\Top \to \iGpd,$ sending each topological space to its weak homotopy type.
\end{proof}

\begin{rmk}\label{rmk:colimok}
By the proof of Corollary 4.2.4.8 of \cite{htt}, it follows that $h$ sends homotopy colimits in $\Top$ to colimits in $\iGpd.$
\end{rmk}

We will now give a proof of Proposition \ref{prop:pi}:

\begin{proof}
From Lemma \ref{lem:h}, by composition we get a functor $$\pi:\Mfd \to \Top \to \iGpd$$ sending each smooth manifold to its homotopy type. By Theorem \ref{thm:5.1.5.6}, by left Kan extension there is a colimit preserving functor $$\Lan_y \pi:\Pshi\left(\Mfd\right) \to \iGpd$$ sending every manifold to its underlying homotopy type. It follows from the Yoneda lemma that this functor has a right adjoint $R_\pi$ which sends an $\i$-groupoid $X$ to the $\i$-presheaf $$R_\pi\left(X\right):M \mapsto \Hom\left(\pi\left(M\right),X\right).$$ We claim that $R_\pi\left(X\right)$ is an $\i$-sheaf. To see this, it suffices to observe that if $$\mathcal{U}=\left(U_\alpha \hookrightarrow M\right)$$ is an open cover of a manifold $M,$ then the colimit of $$\Delta^{op} \stackrel{N\left(M_\mathcal{U}\right)}{\longlongrightarrow} \Mfd \to \iGpd$$ is $\pi\left(M\right),$ which follows for an open cover of an arbitrary topological space by Theorem 1.1 of \cite{duggerisaksen} combined with Remark \ref{rmk:colimok}. It follows that $\Lan_y \pi$ and $R_\pi$ restrict to an adjunction $$\Pi_\infty \dashv \Delta$$ between $\Shi\left(\Mfd\right)$ and $\iGpd,$ so in particular, $\Pi_\infty$ preserves colimits.
\end{proof}

An advantage of our approach in regards to that of \cite{dugger} and \cite{dcct} is it readily generalizes, e.g.:

\begin{prop}\label{prop:pi2}
The functor $$\Pi_\i:\Shi\left(\Mfd\right) \to \iGpd$$ of Proposition \ref{prop:pi} sends every manifold $M$ in $\widetilde{\Mfd}$ to its underlying homotopy type. (Recall that $\widetilde{\Mfd}$ is the category of all topological spaces with a smooth atlas, i.e. smooth manifolds which need not be Hausdorff or $2^{nd}$-countable.)
\end{prop}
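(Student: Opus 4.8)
The plan is to reduce the statement, via descent along a chart atlas, to the case of second-countable Hausdorff manifolds already settled in Proposition~\ref{prop:pi}. Fix a manifold $M$ in $\widetilde{\Mfd}$, regarded as an object of $\Shi\left(\Mfd\right)$ through the composite $\widetilde{\Mfd} \hookrightarrow \Shi\left(\widetilde{\Mfd}\right) \simeq \Shi\left(\Mfd\right)$, where the last equivalence is Proposition~\ref{prop:ithesame}. Choose an open cover $\mathcal{U}=\left(U_\alpha \hookrightarrow M\right)$ in which each $U_\alpha$ is diffeomorphic to an open subset of a Euclidean space $\RR^n$. The observation that makes the reduction possible is that, although $M$ itself need not be Hausdorff or second-countable, every finite intersection $U_{\alpha_0}\cap\cdots\cap U_{\alpha_k}$ is an open subset of the chart $U_{\alpha_0}$, hence an open subset of $\RR^n$, and therefore a genuine object of $\Mfd$.

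Next I would invoke descent in the $\i$-topos $\Shi\left(\widetilde{\Mfd}\right)$: the map of $\i$-sheaves $\coprod_\alpha U_\alpha \to M$ induced by the inclusions is an effective epimorphism, so $M$ is the colimit over $\Delta^{op}$ of the \v{C}ech nerve $N\left(M_\mathcal{U}\right)_\bullet$ of the cover, whose $k$-th level is the disjoint union $\coprod_{\alpha_0,\ldots,\alpha_k} U_{\alpha_0}\cap\cdots\cap U_{\alpha_k}$. Since disjoint unions of opens are coproducts in a sheaf $\i$-topos, and since the equivalence of Proposition~\ref{prop:ithesame} preserves colimits, this presentation transports to $\Shi\left(\Mfd\right)$, where now each simplicial level is a coproduct of representables of objects of $\Mfd$.

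Now I would apply $\Pi_\i$. By Proposition~\ref{prop:pi} it preserves colimits and sends each object of $\Mfd$ to its underlying homotopy type, and in particular it carries the level-wise coproducts to the corresponding coproducts of homotopy types. Hence $\Pi_\i\left(M\right)$ is the colimit over $\Delta^{op}$ of the homotopy types of the spaces $N\left(M_\mathcal{U}\right)_k$, which by Remark~\ref{rmk:colimok} is $h$ applied to the homotopy colimit of the simplicial space $N\left(M_\mathcal{U}\right)_\bullet$, where $h$ is the functor of Lemma~\ref{lem:h}. By Theorem~1.1 of \cite{duggerisaksen}, the homotopy colimit of the \v{C}ech nerve of an open cover of any topological space recovers the weak homotopy type of that space; applied to the cover $\mathcal{U}$ of $M$ this gives $h\left(M\right)$, the underlying homotopy type of $M$, as desired.

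The delicate point, and the only place where the generality of $\widetilde{\Mfd}$ intervenes, is to ensure that nothing secretly requires $M$ to be Hausdorff or second-countable. This is exactly what the chart argument of the first paragraph guarantees: the entire \v{C}ech nerve lives, level-wise and up to coproducts, in $\Mfd$, so that Proposition~\ref{prop:pi} applies term by term, even though $M$ and the possibly uncountable disjoint unions $\coprod_\alpha U_\alpha$ are not themselves objects of $\Mfd$. The two remaining ingredients---descent for open covers in the $\i$-topos of sheaves, and the Dugger--Isaksen identification of the \v{C}ech homotopy colimit with the ambient space---are valid for arbitrary topological spaces, and hence are insensitive to these separation and countability hypotheses.
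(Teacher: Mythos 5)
Your proof is correct, but it takes a genuinely different route from the paper's. The paper argues by universe enlargement: it reruns the construction of Proposition \ref{prop:pi} in a larger Grothendieck universe to obtain a colimit-preserving functor $\widehat{\Pi_\i}:\LShi\left(\widetilde{\Mfd}\right) \to \LiGpd$ sending every object of $\widetilde{\Mfd}$ to its homotopy type, and then identifies this functor with $\Pi_\i$ by the uniqueness of colimit-preserving functors agreeing on representables of $\Mfd$ (Theorem \ref{thm:5.1.5.6}, together with Remark 6.3.5.17 of \cite{htt} to guarantee that the universe-enlargement inclusions preserve small colimits). You avoid universes entirely: the chart observation places the whole \v{C}ech nerve of $M$ inside coproducts of objects of $\Mfd$, descent (transported through Proposition \ref{prop:ithesame}) presents $M$ as the $\Delta^{op}$-colimit of that nerve in $\Shi\left(\Mfd\right)$, and then colimit-preservation of $\Pi_\i$, Remark \ref{rmk:colimok}, and Theorem 1.1 of \cite{duggerisaksen} finish the computation. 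Note that both proofs rest on the same essential nontrivial input --- the Dugger--Isaksen theorem for open covers of \emph{arbitrary} topological spaces --- which you invoke once at the end, while the paper invokes it inside the construction of $\widehat{\Pi_\i}$ in the larger universe. Your route buys economy and self-containedness, using only the geometry of manifolds (every object of $\widetilde{\Mfd}$ has a chart cover whose finite intersections are Hausdorff and second countable). The paper's route buys generality and reusability: it identifies $\Pi_\i$ with a colimit-preserving functor defined on the entire large sheaf $\i$-category, so its conclusion applies to objects of $\widetilde{\Mfd}$ without exhibiting a cover by small objects, and this soft pattern of argument is what the paper means when it says its approach ``readily generalizes.'' One step of yours deserves an explicit word of justification: the claim that $M$ is the colimit of its \v{C}ech nerve in $\Shi\left(\widetilde{\Mfd}\right)$ uses that the open-cover topology on $\widetilde{\Mfd}$ is subcanonical --- smooth maps into any manifold glue along open covers, with no separation or countability hypotheses --- so that $y\left(M\right)$ is a sheaf and the sheaf condition is precisely the \v{C}ech descent statement you need (equivalently, $\Shi\left(\widetilde{\Mfd}\right)$ is an $\i$-topos by Proposition \ref{prop:ithesame}, so the effective-epimorphism machinery applies); this is a presentational gap, not a mathematical one.
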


\begin{proof}
Denote by $\LiGpd$ the $\icat$ of $\i$-groupoids belonging to a larger Grothendieck universe $\mathcal{V}.$ Then by the same argument as for the proof of \ref{prop:pi} there exists a $\mathcal{V}$-small colimit preserving functor $\widehat{\Pi_\i}:\LShi\left(\widetilde{\Mfd}\right) \to \LiGpd$ which sends every manifold $M$ in $\widetilde{\Mfd}$ to its underlying homotopy type. Note that by Remark 6.3.5.17 of \cite{htt}, both inclusions $$\iGpd \hookrightarrow \LiGpd$$ and $$\Shi\left(\widetilde{\Mfd}\right) \hookrightarrow \LShi\left(\widetilde{\Mfd}\right)$$ preserve small colimits (with respect to the smaller universe). It follows that the two composites
$$\Shi\left(\Mfd\right) \simeq \Shi\left(\widetilde{\Mfd}\right) \hookrightarrow \LShi\left(\widetilde{\Mfd}\right) \stackrel{\widehat{\Pi_\i}}{\longlonglongrightarrow} \LiGpd$$ and
$$\Shi\left(\Mfd\right) \stackrel{\Pi_\infty}{\longlongrightarrow} \iGpd \hookrightarrow \LiGpd$$ are colimit preserving and agree (up to homotopy) on every manifold $N$ in $\Mfd.$ It follows from Theorem \ref{thm:5.1.5.6} that both functors must in fact be equivalent. In particular, if $M$ is a manifold in $\widetilde{\Mfd},$ then $\Pi_\i\left(M\right) \simeq \widehat{\Pi_\i}\left(M\right).$
\end{proof}

\subsection{Fat Geometric Realization}\label{sec:fat}

We will now explain the relationship between the functor $$\Pi_\i:\Shi\left(\Mfd\right) \to \iGpd$$ and the fat geometric realization functor $$|| \mspace{3mu} \bullet \mspace{3mu}||:\Top^{\Delta^{op}} \to \Top.$$

\begin{dfn}\label{dfn:fat}
Let $\Delta_+$ denote the category of non-empty finite ordinals and injective order-preserving maps. A \textbf{semi-simplicial} space is a functor $$Y_\bullet:\Delta^{op}_+ \to \Top.$$ Denote by $$i:\Delta_+ \to \Delta$$ the canonical functor, and denote by $$i^*:\Top^{\Delta^{op}} \to \Top^{\Delta_+^{op}}$$ the functor $X_\bullet \mapsto X_\bullet \circ i^{op}.$ The \textbf{fat geometric realization} of a semi-simplicial space $Y_\bullet$ is the  following co-end:
$$||Y|| =\int^{n \in \Delta_+} Y_n \times \Delta^n,$$ and the fat geometric realization of a simplicial space $X_\bullet$ is by definition the fat geometric realization of $i^*X_\bullet,$ which we will also denote by $||X||.$ More concretely, the fat geometric realization of a simplicial space $X_\bullet$ is the same as the ordinary geometric realization $|X|,$ except one does not quotient out $$\coprod_n{X_n \times \Delta^n}$$ by the relations induced by the degeneracy maps.
\end{dfn}

We begin by recalling a point-set theoretic definition from Appendix A of \cite{duggerisaksen}:
\begin{dfn}\label{dfn:relt1}
An embedding $Y \hookrightarrow Z$ of topological spaces is said to be \textbf{relatively $T1$} if for any open subset $U \subseteq Y,$ and any point $z \in Z\backslash U,$ there is an open subset $W \subseteq Z$ such that $z \notin W$ and $U \subseteq W.$
\end{dfn}

\begin{prop} (Lemma A.2 of \cite{duggerisaksen})\label{prop:A2}
If
$$\xymatrix{X \times \partial\Delta^n \ar[r] \ar[d] & Y \ar[d]\\
X \times \Delta^n \ar[r] & Z}$$ is a pushout diagram in $\Top$, then the embedding $Y \hookrightarrow Z$ is relatively $T1$.
\end{prop}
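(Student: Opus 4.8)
The plan is to argue directly from the concrete description of the pushout, avoiding any appeal to separation axioms on $Y$ or $Z$. Write $f:X\times\partial\Delta^n\to Y$ for the attaching map and $q:X\times\Delta^n\to Z$ for the induced map. First I would record two standard facts about $Z$: as a set it is the disjoint union $Y\sqcup\bigl(X\times(\Delta^n\setminus\partial\Delta^n)\bigr)$ (distinct interior points of the simplices are never identified), and a subset $W\subseteq Z$ is open exactly when $W\cap Y$ is open in $Y$ and $q^{-1}(W)$ is open in $X\times\Delta^n$. Every open set I produce will be specified by prescribing its trace on $Y$ and its preimage under $q$ at once.

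Given an open $U\subseteq Y$ and a point $z\in Z\setminus U$, set $A:=f^{-1}(U)$, which is open in $X\times\partial\Delta^n$. The core observation is that \emph{any} open $V\subseteq X\times\Delta^n$ with $V\cap(X\times\partial\Delta^n)=A$ produces an open subset of $Z$, namely $W:=U\cup q\bigl(V\setminus(X\times\partial\Delta^n)\bigr)$. One checks directly that $W\cap Y=U$ and $q^{-1}(W)=V$: the boundary bookkeeping works out precisely because $V\cap(X\times\partial\Delta^n)=A=f^{-1}(U)$, so the points of $V$ lying over $\partial\Delta^n$ are glued into $U$ and no others. Hence such a $W$ is open and contains $U$. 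That some $V$ exists at all is immediate from the subspace topology on $X\times\partial\Delta^n$: there is an open $\widetilde V\subseteq X\times\Delta^n$ with $\widetilde V\cap(X\times\partial\Delta^n)=A$.

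It then remains to choose $V$ so that in addition $z\notin W$, and here I split into two cases. If $z\in Y$, then $z\notin U=W\cap Y$ forces $z\notin W$ for free, so $V:=\widetilde V$ already works. If instead $z\notin Y$, write $z=q(x_0,t_0)$ with $t_0\in\Delta^n\setminus\partial\Delta^n$. Since $\Delta^n$ is a compact metric (hence normal) space and $\partial\Delta^n$ is closed with $t_0\notin\partial\Delta^n$, I can pick an open $O\subseteq\Delta^n$ with $\partial\Delta^n\subseteq O$ and $t_0\notin O$, for instance $O=\{t:\operatorname{dist}(t,\partial\Delta^n)<\operatorname{dist}(t_0,\partial\Delta^n)\}$. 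Setting $V:=\widetilde V\cap(X\times O)$, the identity $V\cap(X\times\partial\Delta^n)=A$ persists because $\partial\Delta^n\subseteq O$, while $(x_0,t_0)\notin X\times O\supseteq V$ guarantees $z\notin W$.

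The main obstacle, and really the only delicate point, is the point-set verification that $W$ is genuinely open in the quotient topology on $Z$ while simultaneously containing all of $U$ and omitting $z$. The subtlety is exactly the one flagged in the excerpt: $Z$ need not be $T1$, so one cannot separate $z$ from the open set $U$ by invoking Hausdorffness or closedness of points. Instead one must exploit the product structure over the boundary $\partial\Delta^n$ together with the good separation properties of the honest finite simplex $\Delta^n$ itself. Everything else in the argument is formal bookkeeping.
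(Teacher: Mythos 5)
Your proof is correct. Note, however, that the paper itself contains no proof of this statement: it is quoted verbatim as Lemma A.2 of Dugger--Isaksen's appendix, so there is no internal argument to compare against; your write-up serves as a self-contained replacement for that citation. The two facts you call standard are indeed the standard description of an adjunction space (the pushout is along the closed embedding $X\times\partial\Delta^n \hookrightarrow X\times\Delta^n$, so the underlying set of $Z$ is $Y\sqcup\bigl(X\times(\Delta^n\setminus\partial\Delta^n)\bigr)$ and $Z$ carries the final topology), and the central verification goes through exactly as you say: for any open $V\subseteq X\times\Delta^n$ with $V\cap(X\times\partial\Delta^n)=f^{-1}(U)$, the set $W=U\cup q\bigl(V\setminus(X\times\partial\Delta^n)\bigr)$ satisfies $W\cap Y=U$ (interior points never land in $Y$) and $q^{-1}(W)=f^{-1}(U)\cup\bigl(V\setminus(X\times\partial\Delta^n)\bigr)=V$ (injectivity of $q$ on interior points, and boundary points landing in $U$ being precisely $f^{-1}(U)$), hence $W$ is open. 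The case split on $z$ is also handled correctly: for $z\in Y$ the condition $z\notin W$ is automatic from $W\cap Y=U$, and for $z=q(x_0,t_0)$ interior, shrinking $V$ by $X\times O$ with $O=\{t:\operatorname{dist}(t,\partial\Delta^n)<\operatorname{dist}(t_0,\partial\Delta^n)\}$ preserves the boundary trace while excluding $(x_0,t_0)$; this uses only the metric on the simplex, not any separation hypothesis on $X$, $Y$, or $Z$, which is precisely what the statement requires.
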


We will need two lemmas. It seems that they are folklore in some circles, however we found that their complete proofs, without making any topological assumptions such as being $T1$, to be worthy of a careful treatment. The outline of the proof of both we learned from Danny Stevenson, and we also use many ideas from Appendix A of \cite{duggerisaksen}.



\begin{lem}\label{lem:degwise}
If $f:X_\bullet \to Y_\bullet$ is a map of (semi-)simplicial spaces which is a degree-wise weak homotopy equivalence, then the induced map $$||X|| \to ||Y||$$ is a weak homotopy equivalence.
\end{lem}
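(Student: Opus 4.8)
The plan is to prove Lemma~\ref{lem:degwise} by filtering the fat realization by skeleta and inducting on this filtration, reducing the statement to a question about pushouts of weak homotopy equivalences. First I would recall that the fat geometric realization $||X||$ carries a natural filtration
$$\emptyset = F_{-1}||X|| \subseteq F_0||X|| \subseteq F_1||X|| \subseteq \cdots$$
whose $n$-th stage $F_n||X||$ is built from $F_{n-1}||X||$ by a pushout
$$\xymatrix{X_n \times \partial\Delta^n \ar[r] \ar[d] & F_{n-1}||X|| \ar[d]\\
X_n \times \Delta^n \ar[r] & F_n||X||,}$$
and that $||X|| = \colim_n F_n||X||$ as a topological space. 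A map $f:X_\bullet \to Y_\bullet$ induces a map of such filtered diagrams. Since any compact subset of $||X||$ lands in some finite stage $F_n||X||$, it suffices to show that each induced map $F_n||X|| \to F_n||Y||$ is a weak homotopy equivalence; the colimit statement then follows because the transition maps $F_{n-1} \hookrightarrow F_n$ are cofibration-like inclusions and a filtered colimit of weak equivalences along such inclusions is again a weak equivalence (this last point being where the relatively $T1$ hypothesis, Proposition~\ref{prop:A2}, is needed).

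The inductive step would proceed by comparing the two pushout squares for $X$ and $Y$. Assuming inductively that $F_{n-1}||X|| \to F_{n-1}||Y||$ is a weak homotopy equivalence, and observing that $X_n \times \Delta^n \to Y_n \times \Delta^n$ and $X_n \times \partial\Delta^n \to Y_n \times \partial\Delta^n$ are weak homotopy equivalences (since $X_n \to Y_n$ is one by hypothesis and $\Delta^n, \partial\Delta^n$ are fixed), I would invoke a gluing lemma: the pushout of a weak homotopy equivalence of diagrams is a weak homotopy equivalence, provided the relevant maps behave like cofibrations so that the pushouts compute homotopy pushouts. The inclusion $X_n \times \partial\Delta^n \hookrightarrow X_n \times \Delta^n$ is a closed inclusion of the expected form, and Proposition~\ref{prop:A2} guarantees that the resulting inclusion $F_{n-1}||X|| \hookrightarrow F_n||X||$ is relatively $T1$, which is exactly the point-set condition that licenses treating these pushouts as homotopy pushouts even without $T1$ separation assumptions.

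The semi-simplicial case is the essential one, and the simplicial case reduces to it by definition, since $||X||$ is by construction $||i^*X||$; thus I would carry out the entire argument for semi-simplicial spaces and remark that no further work is needed for simplicial ones. The main obstacle I expect is \emph{not} the homotopy-theoretic bookkeeping but the point-set topology: the standard gluing and colimit-of-weak-equivalences lemmas are usually stated under hypotheses (such as the spaces being $T1$, or the maps being NDR-pairs / cofibrations in a convenient category) that are \emph{not} available here, since the spaces $X_n$ may be badly non-Hausdorff and not even $T1$ (e.g.\ arrow spaces of the Haefliger groupoid). The key technical work, therefore, is to establish the gluing lemma and the passage-to-colimit step using only the relatively $T1$ condition of Definition~\ref{dfn:relt1} in place of separation axioms—this is precisely why Proposition~\ref{prop:A2} from \cite{duggerisaksen} is quoted, and verifying that its conclusion suffices to make each pushout a homotopy pushout and each filtered colimit a homotopy colimit is the crux of the proof.
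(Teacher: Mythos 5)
Your proposal follows the same architecture as the paper's proof: filter the fat realization by skeleta, prove $\sk_n||X|| \to \sk_n||Y||$ is a weak homotopy equivalence by induction using a gluing lemma for the defining pushout squares, and pass to the colimit using the fact that maps from compact spaces factor through finite skeleta. Your attribution of the relatively $T1$ condition to that last step is also correct: Proposition \ref{prop:A2} (via Lemma A.3 of \cite{duggerisaksen}) is exactly what rescues the compactness/factorization argument when the spaces $X_n$ are not $T1$.

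The gap is in the inductive step. You assert that relative $T1$-ness of the inclusions $\sk_{n-1}||X|| \hookrightarrow \sk_n||X||$ ``is exactly the point-set condition that licenses treating these pushouts as homotopy pushouts,'' and you single out the verification of this assertion as the crux of the proof. That verification cannot succeed, because relative $T1$-ness is a separation-type condition with no bearing on homotopy invariance of pushouts. Indeed, every embedding into a $T1$ space is automatically relatively $T1$ (take $W = Z \setminus \{z\}$ in Definition \ref{dfn:relt1}), yet pushouts of $T1$, even metrizable, spaces along closed inclusions which are not cofibrations are standardly \emph{not} homotopy pushouts: collapsing $A = \{0\} \cup \{1/n : n \ge 1\} \subseteq [0,1]$ to a point yields a Hawaiian-earring-like space with uncountable fundamental group, whereas the homotopy pushout is weakly equivalent to a countable wedge of circles. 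The ingredient that actually makes the squares defining $\sk_n||X||$ homotopy pushouts is different and is available with no hypotheses on $X_n$: the map $X_n \times \partial\Delta^n \hookrightarrow X_n \times \Delta^n$ is a \emph{closed Hurewicz cofibration} (the pair $\left(\Delta^n,\partial\Delta^n\right)$ is an NDR pair, and crossing an NDR pair with any space gives an NDR pair), though it may fail to be a Serre cofibration. The paper then uses that closed Hurewicz cofibrations are the cofibrations of the Str\o{}m model structure \cite{strom}, in which every object is cofibrant, so that structure is left proper; hence the defining squares are homotopy pushouts and the skeletal filtration is a homotopy colimit, first in the Str\o{}m structure and then, by Lemma A.7 of \cite{duggerisaksen}, in the Quillen structure where weak homotopy equivalences live. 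In short, the two point-set inputs play disjoint roles---relative $T1$-ness for factoring compact maps through finite skeleta, the Hurewicz cofibration/Str\o{}m argument for the homotopy pushout and homotopy colimit claims---and your proposal assigns both roles to the former. Once that justification is replaced, your argument goes through and coincides with the paper's.
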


\begin{proof}
Given a simplicial space $X_\bullet$ there is a filtration 
$$||X||=\underset{n}\colim\sk_n||X||,$$ where the $n$-skeleta $\sk_n||X||$ are defined inductively as 
the pushouts
$$\xymatrix{
X_n\times\partial\Delta^n\ar[r]\ar[d] & \sk_{n-1}||X||\ar[d]\\
X_n\times\Delta^n\ar[r]       & \sk_n ||X||.}$$
The map $X_n\times\partial\Delta^n \hookrightarrow X_n\times \Delta^n$ may fail to be a Serre cofibration, but is a closed Hurewicz cofibration. Closed Hurewicz cofibration are the cofibrations with respect to the Str\o{}m model structure on $\Top$ (c.f. \cite{strom}). In this model structure, every topological space is cofibrant, from which it follows that the Str\o{}m model structure is left proper, and hence each map $$\sk_{n-1}||X|| \hookrightarrow \sk_n||X||$$ is again a Str\o{}m cofibration. It follows that $$||X||=\colim \sk_n ||X||$$ is a homotopy colimit with respect to the Str\o{}m model structure, and hence also with respect to the standard Quillen model structure by Lemma A.7 of \cite{duggerisaksen}. Similarly the above pushout diagram defining $\sk_n ||X||$ is also a homotopy pushout diagram with respect to the Quillen model structure.

We now make two observations:
\begin{itemize}
\item[1)] Any map from a sphere $S^n$ or disk $D^{n+1}$ into $||X||$ must factor through at a finite stage of the filtration by skeleta.
\item[2)] Each induced map $$\sk_{n-1}||X|| \hookrightarrow \sk_n||X||$$ induces an isomorphism on $\pi_i$ for $i<n$.
\end{itemize}
The first observation is mostly standard, however the usual argument usually assumes that each stratum of the filtration is a $T1$-space, however, one actually only needs that the inclusion of each stratum is relatively $T1$ in the sense of Definition \ref{dfn:relt1}, which follows from Lemma A.2 of \cite{duggerisaksen}; see Lemma A.3 of \cite{duggerisaksen} for details. The second observation follows immediately from the fact that for all $n$, we have a canonical identification $$\sk_n ||X||/\sk_{n-1} ||X|| \cong \Sigma^n X_n,$$ which is an $\left(n-1\right)$-connected space.

We now claim that for all $n$, the induced map $$j_n:\sk_{n}||X|| \hookrightarrow ||X||$$ induces an isomorphism on $\pi_i$ for $i \le n$. To see that the map is surjective, let $\left[\gamma\right] \in \pi_i\left(||X||\right),$ and consider a map $$f:S^i \to ||X||$$ such that $\left[f\right]=\left[\gamma\right].$ Then $f$ must factor through an inclusion of the form $$\sk_{k}||X|| \hookrightarrow ||X||$$ for some $k$. If $k > n$ then it follows by induction from $2)$ above, that $$\sk_{n}||X|| \hookrightarrow \sk_k||X||$$ induces an isomorphism on $\pi_i,$ and hence $f$ is homotopic to a map factoring through the inclusion $j_n$. To see that the map is injective, suppose that $$\left[\alpha\right] \in \pi_i\left(\sk_{n}||X||\right)$$ and $$\left(j_n\right)_*\left(\left[\alpha\right]\right)=0.$$ This means that the map $S^i \stackrel{\alpha}{\longrightarrow} \sk_{n}||X|| \stackrel{j_n}{\longlongrightarrow} ||X||$ extends to a continuous map $\tilde \alpha:D^{i+1} \to ||X||.$ The map $\tilde \alpha$ must factor through an inclusion of the form $$\sk_{k}||X|| \hookrightarrow ||X||$$ for some $k.$ If $k \le n,$ then it follows that $\alpha$ is null-homotopic, and if $k >n$ it follows that $\left(j_{n,k}\right)_*\left[\alpha\right]=0,$ where $$j_{n,k}:\sk_{n}||X|| \hookrightarrow \sk_k||X||$$ is the  canonical inclusion. By $2)$, $\left(j_{n,k}\right)_*$ is an isomorphism, so we must have $\left[\alpha\right]=0.$ 

From the above, it follows that to show that the map $$||f||:||X|| \to ||Y||$$ is a weak homotopy equivalence, it suffices to show that for all $n,$ the induced map $$\sk_n ||X|| \to \sk_n ||Y||$$ is a weak homotopy equivalence. We will prove this by induction on $n$. Since $\sk_0 ||X||=X_0,$ this establishes the base case of our induction. Now suppose that  $$\sk_{n-1} ||X|| \to \sk_{n-1}||Y||$$ is a weak homotopy equivalence. We have a map of spans which consists of weak homotopy equivalences:
$$\xymatrix{X_n\times\Delta^n \ar[d] & X_n\times\partial\Delta^n \ar[l] \ar[d] \ar[r] & \sk_{n-1}||X|| \ar[d]\\
Y_n\times\Delta^n& Y_n\times\partial\Delta^n \ar[l] \ar[r] & \sk_{n-1}||Y||,}$$
from which it follows that the induced map between their homotopy pushouts is a weak equivalence, and since their homotopy pushouts are given by $\sk_n ||X||$ and $\sk_n ||Y||$ respectively, this establishes our claim.

\end{proof}

\begin{lem}\label{lem:fathoc}
If $X_\bullet:\Delta^{op} \to \Top$ is any simplicial space then $||X||$ is the homotopy colimit of $X_\bullet.$
\end{lem}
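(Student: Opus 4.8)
The plan is to identify $||X||$ with the left-derived geometric realization of $X_\bullet$, and then to use Lemma \ref{lem:degwise} to reduce to a cofibrant situation in which fat and thin realization coincide. Concretely, I would equip $\Top$ with the standard Quillen model structure and $\Top^{\Delta^{op}}$ with the associated Reedy model structure. Geometric realization $|\mspace{2mu}\bullet\mspace{2mu}|=\int^{[n]}(\mspace{2mu}\bullet\mspace{2mu})_n\times\Delta^n$ is then a left Quillen functor, so its total left derived functor — which is exactly the homotopy colimit functor $\hocolim_{\Delta^{op}}$ — is computed by $X_\bullet\mapsto|QX_\bullet|$ for any Reedy-cofibrant replacement $p:QX_\bullet\to X_\bullet$. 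Since such a $p$ is in particular a degree-wise weak homotopy equivalence, Lemma \ref{lem:degwise} guarantees that $||QX||\to||X||$ is a weak homotopy equivalence. Thus it suffices to prove the lemma for the Reedy-cofibrant simplicial space $QX_\bullet$, i.e. to produce a natural weak equivalence $||QX||\simeq|QX|$.

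For the Reedy-cofibrant case I would invoke two standard facts about a Reedy-cofibrant (hence \emph{good}) simplicial space $Y_\bullet$: first, that the strict realization $|Y|$ already models $\hocolim_{\Delta^{op}}Y$ (the weight $\Delta^\bullet$ is Reedy cofibrant, so the coend $\int^{[n]}Y_n\times\Delta^n$ computes the derived coend); and second, that the canonical quotient map $q:||Y||\to|Y|$ collapsing the cells coming from degeneracies is a weak homotopy equivalence. Granting these, one chains $||X||\simeq||QX||\xrightarrow{\,q\,}|QX|\simeq\hocolim_{\Delta^{op}}QX\simeq\hocolim_{\Delta^{op}}X$, which is the assertion of the lemma.

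The step I expect to be the main obstacle is the second fact, namely that $q:||Y||\to|Y|$ is a weak equivalence for good $Y_\bullet$ established \emph{without} any $T1$ hypothesis on the $Y_n$ — a genuine necessity here, since the spaces arising from Haefliger-type groupoids are highly non-Hausdorff. I would run this argument in close parallel to the proof of Lemma \ref{lem:degwise}. Both $||Y||$ and $|Y|$ carry skeletal filtrations and $q$ is filtration-preserving; on the respective skeleta the cells are assembled from the inclusions $Y_n\times\partial\Delta^n\hookrightarrow Y_n\times\Delta^n$ together with the latching (degeneracy) data, and Reedy cofibrancy makes the latching inclusions closed Hurewicz cofibrations, so the strict pushouts defining the skeleta of $|Y|$ are simultaneously homotopy pushouts, exactly as they already are for $||Y||$. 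An induction on skeleta then shows $q$ is a weak equivalence at each finite stage, and passing to the colimit requires only that maps out of spheres and disks factor through a finite stage — which is precisely where the relatively-$T1$ technology (Definition \ref{dfn:relt1}, Proposition \ref{prop:A2}, and Lemma A.3 of \cite{duggerisaksen}) replaces the usual $T1$ assumption, just as in Lemma \ref{lem:degwise}. The delicate point is the bookkeeping identifying the degenerate cells of $|Y|$ with their fat counterparts in $||Y||$, and it is exactly these non-$T1$ subtleties that make a careful, self-contained treatment worthwhile.
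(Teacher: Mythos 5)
Your proof is correct, but it takes a genuinely different route from the paper's. Both arguments share the same skeleton: replace $X_\bullet$ levelwise by something nicer, use Lemma \ref{lem:degwise} to see that fat realization is insensitive to the replacement, and then compare fat and thin realization on the nice replacement. The difference lies in the replacement and in how the thin realization gets identified with the homotopy colimit. The paper takes the explicit levelwise CW replacement $Y_n = |\operatorname{Sing}(X_n)|$, identifies $\hocolim Y_\bullet$ with $|Y|$ entirely through simplicial sets (realization of simplicial sets is left Quillen, the homotopy colimit of a bisimplicial set is its diagonal, and the Eilenberg--Zilber theorem converts the diagonal into the double realization), and then quotes Segal's Proposition A.1(iv) of \cite{catcoh}, goodness of $Y_\bullet$ being supplied by Lewis's lemma \cite{lewis} since CW complexes are locally equi-connected. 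You instead take an abstract Reedy cofibrant replacement $QX_\bullet$ and use that realization of simplicial spaces is left Quillen for the Reedy structure, with the cofibrant-weight argument identifying derived realization with $\hocolim_{\Delta^{op}}$; this is cleaner and avoids the bisimplicial/Eilenberg--Zilber detour, at the cost of invoking Reedy machinery. One genuine correction to your assessment of where the difficulty lies: the fat-versus-thin comparison that you flag as the ``main obstacle'' does not need to be reproved without $T1$ hypotheses. In the Quillen model structure, a Reedy cofibrant simplicial space is in particular levelwise cofibrant, i.e. each $QX_n$ is a retract of a cell complex and hence $T1$; moreover $QX_\bullet$ is good, since latching maps (and hence degeneracies) are closed Hurewicz cofibrations. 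So you may simply cite Segal's Proposition A.1(iv) for $QX_\bullet$, exactly as the paper does for its CW replacement. All of the non-$T1$ delicacy coming from Haefliger-type examples is already quarantined in Lemma \ref{lem:degwise} (together with Definition \ref{dfn:relt1} and Proposition \ref{prop:A2}), which is applied only to the levelwise equivalence $QX_\bullet \to X_\bullet$ (respectively $Y_\bullet \to X_\bullet$ in the paper), where the bad spaces actually live. With that simplification your argument closes up and is correct as stated.
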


\begin{proof}
Consider the simplicial space $Y_\bullet$ defined by $$Y_n=|\operatorname{Sing}\left(X_n\right)|,$$ together with the canonical map $$\epsilon: Y_\bullet \to X_\bullet$$ which is degree-wise a weak homotopy equivalence by construction. It follows that the induced map $$\hocolim Y_\bullet \to \hocolim X_\bullet$$ is a weak homotopy equivalence. Notice that the geometric realization functor $$|\mspace{3mu}\bullet \mspace{3mu}|:\Set^{\Delta^{op}} \to \Top$$ is left Quillen with respect to the standard Quillen model structure on both sides, hence preserves homotopy colimits. It follows that $$\hocolim Y_\bullet \simeq | \hocolim \operatorname{Sing}\left(X_\bullet\right)|.$$ Since the homotopy colimit of a simplicial diagram of simplicial sets can be computed as the diagonal of the resulting bisimplicial set, we have 

$$\hocolim \operatorname{Sing}\left(X_\bullet\right) \simeq  \operatorname{diag}\left(\operatorname{Sing}\left(X_\bullet\right)_\bullet\right)$$ from which it follows that

$$\hocolim Y_\bullet \simeq |\operatorname{diag}\left(\operatorname{Sing}\left(X_\bullet\right)_\bullet\right)|\cong | \left(|\operatorname{Sing}\left(X_\bullet\right)_\bullet\right)|,$$
where the isomorphism follows from the Eilenberg-Zilber theorem.

Note that $CW$-complexes are locally equi-connected and by Lemma 3.1 $a)$ of \cite{lewis}, inclusions of retracts of locally equi-connected spaces are closed cofibrations, so it follows that the simplicial space $Y_\bullet = |\operatorname{Sing}\left(X_\bullet\right)|$ is \emph{good} in the sense of Appendix A of \cite{catcoh}. It follows now from Proposition A.1 (iv) of the same appendix that the induced map
$$||Y|| \to |Y|=| \left(|\operatorname{Sing}\left(X_\bullet\right)_\bullet\right)|$$ is a weak homotopy equivalence. Hence, $||Y||$ is also the homotopy colimit of $Y_\bullet.$ Finally, since the map $$\epsilon: Y_\bullet \to X_\bullet$$ is degree-wise a weak homotopy equivalence, by Lemma \ref{lem:degwise}, the induced map $$||Y|| \to ||X||$$ is a weak homotopy equivalence, and thus $||X||$ is the homotopy colimit of $Y_\bullet,$ and hence of $X_\bullet,$ as desired.
\end{proof}




\begin{thm}
Suppose that $X_\bullet:\Delta^{op} \to \widetilde{\Mfd}$ is a simplicial manifold and let $\left[X_\bullet\right]:=\colim \tilde X_\bullet,$ where $\tilde X_\bullet$ is defined as the composite $$\Delta^{op} \stackrel{X_\bullet}{\longlongrightarrow} \widetilde{\Mfd}  \stackrel{y}{\hookrightarrow} \Shi\left(\widetilde{\Mfd}\right) \simeq \Shi\left(\Mfd\right).$$ Then $$\Pi_\i \left[X_\bullet\right] \simeq h \left(||X||\right),$$ where $h:\Top \to \iGpd$ is the functor from Lemma \ref{lem:h}.
\end{thm}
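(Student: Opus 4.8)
The plan is to combine the colimit-preservation of $\Pi_\i$ with the two lemmas on fat realization. Since $\left[X_\bullet\right]$ is by definition the colimit $\colim \tilde X_\bullet$ computed in $\Shi\left(\widetilde{\Mfd}\right) \simeq \Shi\left(\Mfd\right),$ and $\Pi_\i$ preserves colimits by Proposition \ref{prop:pi}, the first step is to write $$\Pi_\i\left[X_\bullet\right] \simeq \colim \left(\Pi_\i \circ \tilde X_\bullet\right),$$ where the colimit is now taken in $\iGpd$ over $\Delta^{op}.$

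The second step is to identify the diagram $\Pi_\i \circ \tilde X_\bullet : \Delta^{op} \to \iGpd.$ By Proposition \ref{prop:pi2}, $\Pi_\i$ carries each representable $y\left(X_n\right)$ to the weak homotopy type $h\left(X_n\right),$ so object-wise $\Pi_\i \circ \tilde X_\bullet \simeq h \circ X_\bullet.$ The delicate point here is to promote these object-wise equivalences to an equivalence of $\Delta^{op}$-indexed diagrams, i.e. to naturality with respect to all the face and degeneracy maps of $X_\bullet.$ This is where I would lean on the construction of $\Pi_\i$ as the left Kan extension $\Lan_y \pi$ (which gives $\Pi_\i \circ y \simeq \pi = h \circ \left(\widetilde{\Mfd} \hookrightarrow \Top\right)$ as functors, not merely pointwise), together with Proposition \ref{prop:pi2}, whose proof already exhibits the two relevant colimit-preserving functors as genuinely equivalent functors and thus supplies the naturality over all of $\widetilde{\Mfd}.$ Granting this, the colimits agree: $\colim\left(\Pi_\i \circ \tilde X_\bullet\right) \simeq \colim \left(h \circ X_\bullet\right).$

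Finally, I would relate $\colim\left(h \circ X_\bullet\right)$ to $h\left(||X||\right).$ By Lemma \ref{lem:fathoc}, $||X||$ is the homotopy colimit of the simplicial space $X_\bullet$ in $\Top,$ and by Remark \ref{rmk:colimok} the functor $h$ of Lemma \ref{lem:h} sends homotopy colimits in $\Top$ to colimits in $\iGpd.$ Hence $$h\left(||X||\right) \simeq h\left(\hocolim X_\bullet\right) \simeq \colim \left(h \circ X_\bullet\right).$$ Chaining the three displayed equivalences then yields $\Pi_\i\left[X_\bullet\right] \simeq h\left(||X||\right),$ as desired.

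The step I expect to be the main obstacle is the middle one: upgrading the object-wise identification of Proposition \ref{prop:pi2} to an equivalence of diagrams $\Delta^{op} \to \iGpd.$ Since a colimit depends on the whole diagram and not only on its values, this naturality is exactly what makes the comparison of the two colimits legitimate; once it is secured, the colimit-preservation of $\Pi_\i$ and the identification of $||X||$ as a homotopy colimit are essentially bookkeeping.
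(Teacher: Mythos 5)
Your proposal is correct and is essentially the paper's own proof, run in the opposite direction: the paper starts from $h\left(||X||\right)$ and chains Remark \ref{rmk:colimok} with Lemma \ref{lem:fathoc}, then Proposition \ref{prop:pi2}, then colimit-preservation of $\Pi_\i$, which are exactly your three steps. Your extra care about upgrading the object-wise identification $\Pi_\i\left(X_n\right) \simeq h\left(X_n\right)$ to an equivalence of $\Delta^{op}$-diagrams is a point the paper passes over silently when citing Proposition \ref{prop:pi2}, and your resolution (the proof of that proposition produces an equivalence of colimit-preserving \emph{functors}, hence naturality for free) is the right one.
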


\begin{proof}
By Remark \ref{rmk:colimok}, the functor $h$ sends homotopy colimits to colimits, hence from Lemma \ref{lem:fathoc} and Proposition \ref{prop:pi2}, it follows that 

\begin{eqnarray*}
h \left(||X||\right) &\simeq& \colim \left(\ldots h\left(X_2\right) \rrrarrow h\left(X_1\right) \rrarrow h\left(X_0\right)\right)\\
&\simeq& \colim \left(\ldots \Pi_\i\left(X_2\right) \rrrarrow \Pi_\i\left(X_1\right) \rrarrow \Pi_\i\left(X_0\right)\right)\\
&\simeq& \Pi_\i \left[X_\bullet\right],
\end{eqnarray*}
the final equivalence following from the fact that $\Pi_\i$ preserves colimits.
\end{proof}

\begin{cor}\label{cor:diffhom}
For $\left[\G\right]$ a differentiable stack, $$\Pi_\i\left(\left[\G\right]\right) \simeq h\left(B\G\right).$$
\end{cor}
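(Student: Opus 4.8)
The plan is to identify $\left[\G\right]$ with $\left[N\left(\G\right)_\bullet\right]$, where $N\left(\G\right)_\bullet$ is the simplicial nerve of $\G$, and then to quote the theorem immediately preceding this corollary. Since $N\left(\G\right)_\bullet$ is a simplicial object of $\widetilde{\Mfd}$ rather than of $\Mfd$ — a Lie groupoid in our sense may well have a non-Hausdorff arrow manifold — it is essential that this theorem, which rests on the $\widetilde{\Mfd}$-version of the $\Pi_\i$-computation supplied by Proposition \ref{prop:pi2}, is available for simplicial objects of $\widetilde{\Mfd}$. Granting that, the theorem yields $\Pi_\i\left[N\left(\G\right)_\bullet\right] \simeq h\left(||N\left(\G\right)||\right)$, and since $B\G$ is \emph{by definition} the fat geometric realization $||N\left(\G\right)||$, this is precisely $h\left(B\G\right)$. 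Thus everything reduces to the equivalence $\left[N\left(\G\right)_\bullet\right] \simeq \left[\G\right]$.

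For that equivalence I would reprise the argument of Remark \ref{rmk:Gcolim}, emphasizing that it never uses that $\G$ is \'etale. In detail, $\left[N\left(\G\right)_\bullet\right]$ is the colimit in $\Shi\left(\Mfd\right)$ of the levelwise representables $y\left(N\left(\G\right)_n\right)$, and because the $\i$-stackification $a$ is a left adjoint it suffices to compute the colimit of $\tilde y \circ N\left(\G\right)_\bullet$ in $\i$-presheaves and then apply $a$. Colimits of $\i$-presheaves are objectwise, so at a manifold $M$ the diagram becomes the simplicial set $n \mapsto \Hom\left(M, N\left(\G\right)_n\right)$, which is exactly the nerve of the discrete groupoid $\tilde y\left(\G\right)\left(M\right) = \Hom_{\LieGpd}\left(M^{id}, \G\right)$. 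The homotopy colimit of the nerve of a discrete groupoid is that groupoid regarded as an $\i$-groupoid, computed as the diagonal of the associated bisimplicial set. Hence the objectwise colimit is $\tilde y\left(\G\right)$, and applying $a$ gives $a\left(\tilde y\left(\G\right)\right) = \left[\G\right]$.

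Combining the two steps gives
$$\Pi_\i\left(\left[\G\right]\right) \simeq \Pi_\i\left[N\left(\G\right)_\bullet\right] \simeq h\left(||N\left(\G\right)||\right) = h\left(B\G\right).$$
The one point worth flagging is that this is no harder for a general Lie groupoid than for an \'etale one: the objectwise reduction to the nerve of a discrete groupoid uses only that each $\Hom\left(M, N\left(\G\right)_n\right)$ is a set, so Remark \ref{rmk:Gcolim} applies verbatim to an arbitrary Lie groupoid, which is exactly what is needed for a general differentiable stack. There is, therefore, no real obstacle internal to the corollary — its substance lives entirely in the preceding theorem and in the non-Hausdorff strengthening Proposition \ref{prop:pi2}, and the corollary is a clean specialization of these.
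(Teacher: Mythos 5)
Your proof is correct and is precisely the argument the paper intends: the paper states this as an immediate corollary of the preceding theorem, taking $X_\bullet = N\left(\G\right)_\bullet$ and using the identification $\left[N\left(\G\right)_\bullet\right] \simeq \left[\G\right]$ supplied by the argument of Remark \ref{rmk:Gcolim}, which (as you correctly observe) nowhere uses that $\G$ is \'etale, since the objectwise reduction to the nerve of the discrete groupoid $\Hom_{\LieGpd}\left(M^{id},\G\right)$ and the bisimplicial diagonal argument apply to any Lie groupoid. Your point about non-Hausdorff arrow manifolds forcing the use of the $\widetilde{\Mfd}$-version of the machinery is likewise exactly why the paper proves Proposition \ref{prop:pi2} rather than resting on Proposition \ref{prop:pi} alone.
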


\subsection{Sheaves over The Monoid of Embeddings of $\R^n$}\label{sec:monoid1}

Denote by $\Emb$ the discrete monoid of smooth embeddings of $\RR^n$ into itself. Regarding this monoid as a category with one object, there is a canonical functor
$$\alpha:\Emb \to \nMfd^{\et}$$ to the category of $n$-manifolds and local diffeomorphisms, sending the unique object to $\RR^n$ and each embedding to itself, considered as a local diffeomorphism $$\RR^n \to \RR^n.$$

We can introduce a Grothendieck pre-topology on the category $\Emb$ where a collection of embeddings $$\left(\varphi_i:\RR^n \hookrightarrow \RR^n\right)$$ is a cover when the family is jointly surjective.

\begin{rmk}\label{rmk:notsubcan}
The Grothendieck topology associated to the above pre-topology is not subcanonical. The essential image of
$$\Emb \stackrel{y}{\hookrightarrow} \Psh\left(\Emb\right) \stackrel{a}{\longrightarrow} \Sh\left(\Emb\right)$$ is canonically equivalent to the monoid $\operatorname{LocDiff}\left(\RR^n\right)$ of self local diffeomorphisms of $\RR^n.$
\end{rmk}

The following theorem is proven in Appendix \ref{sec:monoid}:

\begin{customthm}{\ref{thm:embeql}}
The canonical functor $$\alpha^*:\Pshi\left(\nMfd^{\et}\right) \to \Pshi\left(\Emb\right)$$ restricts to an equivalence of $\i$-categories
$$\alpha^*:\Shi\left(\nMfd^{\et}\right) \to \Shi\left(\Emb\right).$$
\end{customthm}

\subsection{The Homotopy Type of Higher \'Etale Differentiable Stacks}\label{sec:main}

By Theorem \ref{thm:5.3.9}, for every $n$-dimensional \'etale differentiable $\i$-stack $\X,$ there exists a (unique) $\i$-stack $F$ on the site of $n$-manifolds and their local diffeomorphisms such that $j^n_!F \simeq \X,$ where $j^n_!$ is the $n$-dimensional \'etale prolongation functor of Definition \ref{dfn:prol}. Since $F$ determines $\X,$ it is natural to ask if the homotopy type $\Pi_\i\left(\X\right)$ can be naturally expressed in terms of $F.$ The following result answers this question:

\begin{thm}\label{thm:main}
Suppose that $F$ in $\Shi\left(\nMfd^{\et}\right)$ is an $\i$-sheaf on $n$-manifolds and their local diffeomorphisms, and let $\X=j^n_!F$ be its associated $n$-dimensional \'etale differentiable $\i$-stack. Then the $\i$-groupoid $\Pi_\i\left(\X\right)$ can be expressed as the colimit of the following composite:

$$\Emb^{op} \stackrel{\alpha^{op}} {\longlongrightarrow} \left(\nMfd^{\et}\right)^{op} \stackrel{F}{\longrightarrow} \iGpd.$$
\end{thm}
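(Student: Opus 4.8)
The plan is to compute $\Pi_\i\left(\X\right)=\Pi_\i\left(j^n_!F\right)$ by presenting $F$ as an explicit colimit of copies of the representable sheaf $y\left(\RR^n\right)$, indexed by the category of elements of $\alpha^*F$, and then pushing that colimit through $\Pi_\i\circ j^n_!$. First I would record the two formal facts I need. The functor $\Pi_\i\circ j^n_!$ preserves colimits: $j^n_!$ is a left adjoint by Definition \ref{dfn:prol}, and $\Pi_\i$ preserves colimits by Proposition \ref{prop:pi}. The single value I need is $\Pi_\i\left(j^n_!\,y\left(\RR^n\right)\right)\simeq h\left(\RR^n\right)\simeq\ast$, using that $j^n_!$ carries the representable sheaf $y\left(\RR^n\right)$ on $\nMfd^{\et}$ to the representable sheaf of the manifold $\RR^n$ on $\Mfd$ (since $j^n_!=a\circ\left(j_n\right)_!\circ\iota$ and representables are sheaves), so that $\Pi_\i$ returns the homotopy type $h\left(\RR^n\right)\simeq\ast$ by Proposition \ref{prop:pi}.

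Next I would set up the relevant adjunction on the monoid side. The composite $y\circ\alpha:\Emb\to\Shi\left(\nMfd^{\et}\right)$ sends the unique object to $y\left(\RR^n\right)$, and by the universal property of presheaf $\i$-categories (Theorem \ref{thm:5.1.5.6}) it extends to a colimit-preserving functor
$$L:\Pshi(\Emb)\to\Shi(\nMfd^{\et}),$$
whose right adjoint $R$ satisfies $R(F)(\ast)\simeq\Hom(y(\RR^n),F)\simeq F(\RR^n)$; that is, $R=\iota\circ\alpha^*$, where $\alpha^*:\Shi(\nMfd^{\et})\to\Shi(\Emb)$ is restriction along $\alpha$ and $\iota:\Shi(\Emb)\hookrightarrow\Pshi(\Emb)$ is the (fully faithful) inclusion of sheaves. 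Since the category of elements of $\iota\alpha^*F$ is $\textstyle\int_{\Emb}\alpha^*F$, the standard colimit formula for the colimit-preserving extension $L$ gives, for any $F$,
$$L\,R(F)\simeq\colim\left(\textstyle\int_{\Emb}\alpha^*F\longrightarrow\Emb\stackrel{y\circ\alpha}{\longrightarrow}\Shi(\nMfd^{\et})\right),$$
a colimit of copies of $y\left(\RR^n\right)$ indexed by $\textstyle\int_{\Emb}\alpha^*F$.

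The crux is to identify $L\,R(F)$ with $F$, i.e.\ to show the counit $L\,R(F)\to F$ is an equivalence, and this is the one step that is not purely formal. It amounts exactly to the statement that $R=\iota\circ\alpha^*$ is fully faithful, and here is where the geometry enters: by Theorem \ref{thm:embeql} the functor $\alpha^*:\Shi(\nMfd^{\et})\to\Shi(\Emb)$ is an equivalence, and $\iota$ is fully faithful, so $R$ is fully faithful and the counit of $L\dashv R$ is invertible. Thus
$$F\simeq\colim\left(\textstyle\int_{\Emb}\alpha^*F\to\Emb\stackrel{y\circ\alpha}{\longrightarrow}\Shi(\nMfd^{\et})\right).$$
I expect this to be the main obstacle only in the sense of correctly isolating it; once the adjunction $L\dashv\iota\alpha^*$ is in place, the equivalence of Theorem \ref{thm:embeql} does all the work, and everything else is bookkeeping with colimits.

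Finally I would apply $\Pi_\i\circ j^n_!$ to this presentation of $F$. Because $\Pi_\i\circ j^n_!$ preserves colimits and sends every object of the indexing diagram (each a copy of $y\left(\RR^n\right)$) to $\ast$, the diagram becomes constant at $\ast$, whence
$$\Pi_\i(\X)\simeq\colim_{\int_{\Emb}\alpha^*F}\ast\simeq\left|\textstyle\int_{\Emb}\alpha^*F\right|.$$
To conclude I would invoke the standard fact that the colimit of an $\iGpd$-valued diagram is the classifying space of its category of elements, $\left|\int_{\Emb}\alpha^*F\right|\simeq\colim_{\Emb^{op}}\alpha^*F$. Since $\alpha^*F=F\circ\alpha^{op}$ by definition of restriction, this is precisely
$$\colim\left(\Emb^{op}\stackrel{\alpha^{op}}{\longrightarrow}(\nMfd^{\et})^{op}\stackrel{F}{\longrightarrow}\iGpd\right),$$
as desired. (Note that only the value of $\Pi_\i$ on the honest manifold $\RR^n$ is used, so the non-Hausdorff strengthening of Proposition \ref{prop:pi2} is not needed for this particular argument.)
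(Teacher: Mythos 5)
Your proposal is correct, and it reaches the conclusion by a genuinely different final step than the paper, even though the two arguments share the same backbone. Both rest on the colimit-preservation of $\Pi_\i \circ j^n_!$, the computation $\Pi_\i\left(j^n_!\,y^{\et}\left(\RR^n\right)\right)\simeq \ast$, and Theorem \ref{thm:embeql}; moreover your adjunction $L \dashv \iota\alpha^*$ is literally the paper's adjunction in disguise, since $L \simeq \alpha_!\circ a$ (both are the colimit-preserving extension of $\ast\mapsto y^{\et}\left(\RR^n\right)$, equivalently both have right adjoint $\iota\circ\alpha^*$). The divergence is in the endgame. The paper never decomposes $F$ at all: it observes that the full composite $\Pi_\i\circ j^n_!\circ\alpha_!\circ a:\Pshi\left(\Emb\right)\to\iGpd$ is colimit-preserving and sends the representable to $\ast$, hence \emph{is} the colimit functor by Theorem \ref{thm:5.1.5.6} together with Proposition \ref{prop:colimlan} (the colimit functor is the left Kan extension of the terminal functor), and then simply evaluates at $F\circ\alpha^{op}$. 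You instead present $F$ as a colimit of copies of $y^{\et}\left(\RR^n\right)$ over the category of elements (via the invertible counit of $L\dashv\iota\alpha^*$), obtain $\Pi_\i\left(\X\right)\simeq \left|\int_{\Emb}\alpha^*F\right|$, and then must invoke the Thomason--Lurie identification (the paper's Theorem \ref{thm:thom3}, i.e.\ Corollary 3.3.4.6 of \cite{htt}) to convert the realization of the category of elements back into $\colim\left(F\circ\alpha^{op}\right)$. In effect you prove the paper's Corollary \ref{cor:thomasonlurie} first and deduce Theorem \ref{thm:main} from it, reversing the paper's logical order. What each buys: the paper's functor-identification argument is leaner, needing neither the straightening of the comma category $\Emb/\iota\alpha^*F$ into a left fibration nor Thomason--Lurie; yours makes the geometric content more vivid ($F$ is glued from copies of $\RR^n$), delivers the $\left|\int_{\Emb}\alpha^*F\right|$ presentation as a byproduct rather than a separate corollary, and correctly isolates Theorem \ref{thm:embeql} as the unique non-formal input (your closing remark that Proposition \ref{prop:pi2} is not needed here is also accurate, and matches the paper, whose proof of this theorem only uses $\Pi_\i$ on the honest manifold $\RR^n$).
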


\begin{proof}
Since $j_!^n \dashv j_n^*,$ $j_!^n$ is colimit preserving. Denote by $a$ the $\i$-sheafification functor $a:\Pshi\left(\Emb\right) \to \Shi\left(\Emb\right),$ i.e. $a$ is the left adjoint to the inclusion $$i:\Shi\left(\Emb\right) \hookrightarrow \Pshi\left(\Emb\right)$$ of $\i$-sheaves on $\Emb$ into $\i$-presheaves on $\Emb.$ Finally, denote by $\alpha_!$ a left adjoint to the equivalence $\alpha^*$ of Theorem \ref{thm:embeql} (which must exist by Corollary 5.5.2.9 of \cite{htt}, and must also be an equivalence).
It follows that the following composite is a colimit preserving functor:
$$\resizebox{5in}{!}{$\Pshi\left(\Emb\right) \stackrel{a}{\longrightarrow} \Shi\left(\Emb\right) \stackrel{\alpha_!}{\longlongrightarrow} \Shi\left(\nMfd^{\et}\right) \stackrel{j^n_!}{\longlongrightarrow} \Shi\left(\Mfd\right) \stackrel{\Pi_\i}{\longlongrightarrow} \iGpd.$}$$
Since $\alpha_! \dashv \alpha^*$ is an adjoint equivalence, it follows from Remark \ref{rmk:notsubcan} that if $$y_{\Emb}:\Emb \hookrightarrow \Pshi\left(\Emb\right)$$ and $$y^{\et}:\nMfd^{\et} \hookrightarrow \Shi\left(\nMfd^{\et}\right)$$ denote the Yoneda embeddings, that $$\alpha_!a \left(y_{\Emb}\left(\R^n\right)\right) \simeq y^{\et}\left(\R^n\right),$$ and hence
\begin{eqnarray*}
\Pi_\i\left(j_!^n\left(\alpha_!\left(a\left(y\left(\R^n\right)\right)\right)\right)\right) &\simeq& \Pi_\i\left(j_!^n\left(y^{\et}\left(\R^n\right)\right)\right)\\
&\simeq& \Pi_\i\left(y\left(\R^n\right)\right)\\
&\simeq& h\left(\R^n\right) \simeq *.
\end{eqnarray*}
In other words, the composite $$\Emb \stackrel{\Pi_\i \circ j_!^n \circ \alpha_! \circ a \circ y_{\Emb}}{\longlonglonglonglongrightarrow} \iGpd$$ is the terminal functor, i.e. equivalent to the constant functor $$t:\Emb \to \iGpd$$ with value the contractible $\i$-groupoid $*.$ By Theorem \ref{thm:5.1.5.6}, this implies that
\begin{eqnarray*}
\Pi_\i \circ j_!^n \circ \alpha_! \circ a&\simeq& \Lan_{y_{\Emb}}\left(t\right)\\
&\simeq& \colim\left(\mspace{3mu} \cdot \mspace{3mu}\right),
\end{eqnarray*}
where $$\colim\left(\mspace{3mu} \cdot \mspace{3mu}\right):\Pshi\left(\Emb\right) \to \iGpd$$ is the functor sending an $\i$-presheaf $$F:\Emb^{op} \to \iGpd$$ to its colimit (see Proposition \ref{prop:colimlan}). So finally, we have
\begin{eqnarray*}
\Pi_\i\left(\X\right) &\simeq& \Pi_\i\left(j^n_!F\right)\\
&\simeq& \Pi_\i\left(j^n_!\alpha_!\alpha^*F\right)\\
&\simeq& \Pi_\i\left(j^n_! \alpha_! a\left(F\circ \alpha^{op}\right)\right)\\
&\simeq& \colim\left(F \circ \alpha^{op}\right).
\end{eqnarray*}
\end{proof}

\begin{rmk}
Strictly speaking, we do not need to appeal to Theorem \ref{thm:embeql}, as it can be shown quite easily that $\alpha^*$ induces an equivalence between the $\i$-categories of \emph{hyper}sheaves, and one can then appeal to the fact that $\Shi\left(\nMfd^{\et}\right)$ is hypercomplete (\cite{higherme}, Theorem 5.3.6), and then replace the role of $a$ with the hyper-sheafification functor. However, we do not find this as elegant, and we find Theorem \ref{thm:embeql} interesting in its own right.
\end{rmk}

At this point, we wish to remind the reader of a certain categorical construction:

\begin{dfn}\label{dfn:Groth1}
Let $F:\C \to \mathbf{Cat}$ be a functor from a small category $\C$ to the category of small categories. The \textbf{Grothendieck construction} of $F$ is the category
$$\int_\C F$$ whose objects are pairs $\left(C,X\right)$ with $C$ an object of $\C$ and $X$ an object in $F\left(C\right).$ A morphism from $\left(C,X\right)$ to $\left(D,Y\right)$ is a morphism $f:C \to D$ in $\C$ together with a morphism $$g:F\left(f\right)\left(X\right) \to Y$$ in $F\left(D\right).$ Note that there is a canonical functor $$\pi_F:\int_\C F \to \C.$$
There is also a contravariant version, namely if $F:\C^{op} \to \mathbf{Cat}$, we can apply the Grothendieck construction to get a functor $$\int_{\C^{op}} F \to \C^{op},$$ and then consider (with slight abuse of notation) the induced functor $$\pi_F:\left(\int_\C F\right):=\left(\int_{\C^{op}} F\right)^{op} \to \left(\C^{op}\right)^{op}=\C,$$ which we will also call the Grothendieck construction of $F$.
\end{dfn}

\begin{rmk}\label{rmk:Groth1}
There is a slight generalization of the above construction  which works for weak functors $$F:\C \to \mathbf{Cat},$$ rather than than strict functors, that is one which works for morphisms of bicategories, where $\C$ is regarded as a bicategory whose only $2$-morphisms are identities, and $\mathbf{Cat}$ is the bicategory of small categories, functors, and natural transformations. The objects and arrows of the category $$\int_\C F$$ are the same as in Definition \ref{dfn:Groth1}, however the coherency data for $F$ is needed to define composition in this category.
\end{rmk}

We now recall a classical theorem of Thomason:

\begin{thm} (Theorem 1.2 of \cite{Thomason})\\
Let $F:\C \to \mathbf{Cat}$ be a functor from a small category $\C$ to the category of small categories, and consider the composite $$N \circ F: \C \to \Set^{\Delta^{op}}.$$ Then there is a natural homotopy equivalence of simplicial sets $$\hocolim \left(N \circ F\right) \stackrel{\sim}{\longrightarrow} N\left( \int_{\C} F\right),$$ where $$\pi_F:\int_{\C} F \to \C$$ is the Grothendieck construction of $F$.
\end{thm}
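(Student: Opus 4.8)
The plan is to model the homotopy colimit through the Bousfield--Kan simplicial replacement, to write down an explicit natural comparison map into the nerve of the Grothendieck construction, and then to prove that this map is a weak equivalence via the realization lemma for bisimplicial sets together with Quillen's Theorem A. First I would recall the Bousfield--Kan description: for the diagram $N \circ F:\C \to \Set^{\Delta^{op}}$, its homotopy colimit is the diagonal of the bisimplicial set
$$B_{p,q} \;=\; \coprod_{c_0 \to c_1 \to \cdots \to c_p} N_q\bigl(F\left(c_0\right)\bigr),$$
where the internal ($q$) structure is that of the nerve $N\left(F\left(c_0\right)\right)$, the external degeneracies and the inner/outer faces come from the nerve of $\C$, and the face $d_0$ deleting $c_0$ transports the internal datum along $F\left(c_0 \to c_1\right)$. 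Thus $\operatorname{diag}\left(B\right)$ is a model for $\hocolim\left(N \circ F\right)$.

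Next I would construct the comparison map. A $p$-simplex of $\operatorname{diag}\left(B\right)$ is a chain $c_0 \to \cdots \to c_p$ in $\C$ together with a $p$-chain $x_0 \to \cdots \to x_p$ in the single category $F\left(c_0\right)$. Writing $\varphi_i:c_0 \to c_i$ for the composite along the chain, I send this datum to the $p$-simplex
$$\left(c_0,x_0\right) \longrightarrow \left(c_1,F\left(\varphi_1\right)\left(x_1\right)\right) \longrightarrow \cdots \longrightarrow \left(c_p,F\left(\varphi_p\right)\left(x_p\right)\right)$$
of $N\left(\int_\C F\right)$, whose vertical component in degree $i$ is $F\left(\varphi_i\right)$ applied to $x_{i-1} \to x_i$; one checks that the target fiber object at stage $i$ matches the pushed-forward source object at stage $i+1$, so this is a genuine chain in $\int_\C F$. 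A direct verification that the assignment respects faces and degeneracies produces a map of simplicial sets $\eta:\hocolim\left(N \circ F\right) \longrightarrow N\bigl(\int_\C F\bigr)$, manifestly natural in both $F$ and $\C$, so that the asserted equivalence will be natural.

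It then remains to show $\eta$ is a weak equivalence. The strategy is to exhibit $\eta$ as the diagonal of a map of bisimplicial sets and to invoke the realization lemma, which guarantees that a map of bisimplicial sets that is a weak equivalence in each fixed external degree induces a weak equivalence on diagonals; for bisimplicial \emph{sets} this holds with no point-set hypotheses, so none of the $T1$-type care of Lemmas \ref{lem:degwise} and \ref{lem:fathoc} is required here. The levelwise comparison is arranged so that, after fixing the nerve-of-$\C$ direction, the remaining map is the nerve of a functor whose comma categories are contractible, whence Quillen's Theorem A certifies the levelwise weak equivalence; assembling these through the realization lemma shows that $\operatorname{diag}$ of the comparison, namely $\eta$, is a weak equivalence.

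I expect the main obstacle to be precisely this last step: arranging the auxiliary bisimplicial set so that the realization lemma genuinely applies. The naive pairing --- fixing the chain $c_0 \to \cdots \to c_p$ and comparing $N\left(F\left(c_0\right)\right)$ against the fiber data of $\int_\C F$ over that chain --- does \emph{not} give a levelwise equivalence, since that fiber is homotopy equivalent to $N\left(F\left(c_p\right)\right)$ (the value at the terminal object of the chain) rather than to $N\left(F\left(c_0\right)\right)$. The real work is therefore to replace this pairing by the correct one built from comma categories, equivalently the two-sided bar construction, and to carry out the attendant Theorem A computation showing the relevant comma categories are contractible; once that bookkeeping is in place, naturality and the passage to diagonals are formal.
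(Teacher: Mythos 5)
First, a point of comparison: the paper does not prove this statement at all; it is quoted as Theorem 1.2 of Thomason's paper and used as a black box, so your proposal must stand on its own as a proof of Thomason's theorem. On its own terms it contains genuinely correct content: the Bousfield--Kan replacement $B_{p,q}=\coprod_{c_0\to\cdots\to c_p}N_q\left(F\left(c_0\right)\right)$ is the right model for the homotopy colimit, your explicit formula for $\eta$ (transporting the chain $x_0\to\cdots\to x_p$ in $F\left(c_0\right)$ along the partial composites $\varphi_i$) is exactly the standard natural comparison map and is well defined and simplicial, and your diagnosis that the naive levelwise pairing fails --- because the homotopy fiber of $N\left(\pi_F\right)$ over a $p$-chain is equivalent to $N\left(F\left(c_p\right)\right)$ rather than $N\left(F\left(c_0\right)\right)$ --- is accurate.

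The gap is that what you defer as ``bookkeeping'' is the entire proof. Concretely, the missing construction is the two-sided (bar-type) bisimplicial set $A$ whose $\left(p,q\right)$-simplices are a $q$-simplex $\left(c_0,x_0\right)\to\cdots\to\left(c_q,x_q\right)$ of $N\left(\int_\C F\right)$ together with a chain $c_q\to d_0\to\cdots\to d_p$ in $\C$. One must then verify: (i) for fixed $p$, $A_{p,\bullet}\cong\coprod_{d_0\to\cdots\to d_p}N\left(\pi_F/d_0\right)$, and the pushforward functor $\pi_F/d_0\to F\left(d_0\right)$, $\left(\left(c,x\right),f\right)\mapsto F\left(f\right)\left(x\right)$, is left adjoint to the evident inclusion, hence a nerve equivalence, giving a levelwise map $A\to B$; (ii) for fixed $q$, $A_{\bullet,q}$ is a coproduct over $q$-simplices of nerves of under-categories $c_q/\C$, which are contractible, giving a levelwise map from $A$ to the constant-in-$p$ bisimplicial set $N\left(\int_\C F\right)$; the realization lemma then yields a zig-zag of weak equivalences $\hocolim\left(N\circ F\right)\longleftarrow \operatorname{diag}\left(A\right)\longrightarrow N\left(\int_\C F\right)$. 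None of this appears in your write-up. Moreover, even granting it, there remains a step you do not flag and which is \emph{not} formal: the theorem asserts that your specific map $\eta$ is a weak equivalence, not merely that some zig-zag exists, and the triangle formed by $\operatorname{diag}\left(A\right)\to\hocolim\left(N\circ F\right)\stackrel{\eta}{\longrightarrow}N\left(\int_\C F\right)$ and $\operatorname{diag}\left(A\right)\to N\left(\int_\C F\right)$ does not commute on the nose (the two routes produce visibly different chains in $\int_\C F$), so an explicit simplicial homotopy must be constructed before two-out-of-three applies. As it stands, your proposal is a correct and well-informed plan in which the central constructions and verifications are absent.
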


The following is a reformulation in the language of $\i$-categories:
\begin{cor} \label{cor:thom2}
Let $F:\C \to \mathbf{Cat}$ be a functor from a small category $\C$ to the category of small categories, and consider the composite $$\C  \stackrel{F}{\longrightarrow} \mathbf{Cat} \stackrel{B}{\longrightarrow} \Top \stackrel{h}{\longrightarrow} \iGpd.$$ Then the colimit of the above composite is $$h\left(B\left(\int_{\C} F\right)\right).$$ In particular, if $G:\C \to \Set$ is any functor, then the colimit of the composite $$\C  \stackrel{G}{\longrightarrow} \Set \hookrightarrow \iGpd.$$ can be identified with $$h\left(B\left(\int_{\C} G\right)\right).$$
\end{cor}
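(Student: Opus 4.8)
The plan is to deduce this from Thomason's theorem cited above, using Remark~\ref{rmk:colimok}, which records that the functor $h$ of Lemma~\ref{lem:h} carries homotopy colimits in $\Top$ to colimits in $\iGpd.$ Since $\C$ is an ordinary category, regarded as an $\i$-category via its nerve, the colimit in $\iGpd$ of the composite $h \circ B \circ F$ is an $\i$-categorical colimit indexed by $N\left(\C\right),$ and for a diagram valued in $\iGpd$ this agrees with the homotopy colimit over $\C.$ Thus it suffices to identify $h$ applied to the homotopy colimit $\hocolim_{\C}\left(B \circ F\right)$ formed in $\Top.$

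I would then unwind the classifying space functor as $B = \left|N\left(\bl\right)\right|.$ Because geometric realization $\left|\bl\right|:\Set^{\Delta^{op}} \to \Top$ is left Quillen (as already used in the proof of Lemma~\ref{lem:fathoc}), it preserves homotopy colimits, and so it may be commuted past the homotopy colimit:
\begin{equation*}
\hocolim_{\C}\left(B \circ F\right) = \hocolim_{\C}\left(\left|N \circ F\right|\right) \simeq \left|\hocolim_{\C}\left(N \circ F\right)\right|.
\end{equation*}
Thomason's theorem supplies a natural weak equivalence $\hocolim_{\C}\left(N \circ F\right) \stackrel{\sim}{\longrightarrow} N\left(\int_{\C} F\right),$ so the right-hand side is weakly equivalent to $\left|N\left(\int_{\C} F\right)\right| = B\left(\int_{\C} F\right).$ Applying $h$ and invoking Remark~\ref{rmk:colimok} then yields
\begin{equation*}
\colim\left(h \circ B \circ F\right) \simeq h\left(\hocolim_{\C}\left(B \circ F\right)\right) \simeq h\left(B\left(\int_{\C} F\right)\right),
\end{equation*}
which is the claimed identification.

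For the final assertion, I would note that a set, viewed as a discrete category, has for its classifying space the underlying set with the discrete topology, and $h$ of a discrete space is the corresponding discrete $\i$-groupoid; hence the composite $\C \stackrel{G}{\longrightarrow} \Set \hookrightarrow \mathbf{Cat} \stackrel{B}{\longrightarrow} \Top \stackrel{h}{\longrightarrow} \iGpd$ coincides with the composite $\C \stackrel{G}{\longrightarrow} \Set \hookrightarrow \iGpd$ appearing in the statement, while the Grothendieck construction of $G$ as a $\Set$-valued functor is the same as its construction as a $\mathbf{Cat}$-valued functor. The special case thus follows from the general one. I expect no serious obstacle here: the only point requiring care is the alignment of colimit notions---that the $\i$-colimit over $\C$ is computed by the homotopy colimit, and that $h$ transports that homotopy colimit to the $\i$-colimit---but this is precisely what Remark~\ref{rmk:colimok} and Lemma~\ref{lem:h} are designed to give, so once Thomason's theorem is in hand the argument is essentially formal.
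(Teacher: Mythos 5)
Your proof is correct and takes essentially the same approach as the paper: the paper gives no written-out argument, presenting the corollary as a direct ``reformulation'' of Thomason's theorem, and your chain of identifications --- converting the $\i$-colimit to a homotopy colimit in $\Top$ via Lemma \ref{lem:h} and Remark \ref{rmk:colimok}, commuting the left Quillen functor $\left|\bl\right|$ past the homotopy colimit, and then applying Thomason's natural equivalence --- is exactly the implicit content of that reformulation. The reduction of the $\Set$-valued case to the $\mathbf{Cat}$-valued case by viewing sets as discrete categories is likewise the intended reading, so nothing is missing.
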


As an immediate consequence of Corollary \ref{cor:ex1}, Theorem \ref{thm:main} and Corollary \ref{cor:thom2} we get another proof of a celebrated theorem of Segal:

\begin{thm} \label{thm:segal}(Proposition 1.3 of \cite{Segal})\\
Let $\Gamma^n$ be the $n$-dimensional Haefliger groupoid of Example \ref{ex:Haefliger}. Then there is a weak homotopy equivalence between classifying spaces:
$$B\left(\Emb\right) \to B \Gamma^n.$$ 
\end{thm}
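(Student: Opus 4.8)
The plan is to assemble Segal's theorem as a direct corollary of the machinery already developed, using essentially no new geometric input. The key observation is that the $n$-dimensional Haefliger stack $\HA_n = [\Gamma^n]$ is, by Corollary \ref{cor:ex1}, the \'etale prolongation $j^n_!(1)$ of the \emph{terminal} sheaf $1$ on $\nMfd^{\et}$. This means $\HA_n$ is exactly the stack $\X = j^n_! F$ of Theorem \ref{thm:main} in the special case $F = 1$. So the entire proof is a matter of specializing Theorem \ref{thm:main} to this $F$ and then identifying both sides with classifying spaces.

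First I would compute the weak homotopy type $\Pi_\i(\HA_n)$ two ways. On one hand, since $\HA_n \simeq [\Gamma^n]$ is the differentiable stack associated to the \'etale Lie groupoid $\Gamma^n$, Corollary \ref{cor:diffhom} gives $\Pi_\i(\HA_n) \simeq h(B\Gamma^n)$. On the other hand, applying Theorem \ref{thm:main} with $F = 1$ the terminal sheaf, $\Pi_\i(\HA_n)$ is the colimit of the composite
$$\Emb^{op} \stackrel{\alpha^{op}}{\longlongrightarrow} \left(\nMfd^{\et}\right)^{op} \stackrel{1}{\longrightarrow} \iGpd,$$
which is the colimit of the constant functor on $\Emb^{op}$ with value the point. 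Equating the two descriptions already gives $h(B\Gamma^n) \simeq \colim(1 \circ \alpha^{op})$.

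Next I would identify this colimit with $h(B(\Emb))$ via Corollary \ref{cor:thom2}. Taking $\C = \Emb^{op}$ (a one-object category) and $G$ the constant functor at the one-point set, Corollary \ref{cor:thom2} tells us the colimit of $\C \stackrel{G}{\to} \Set \hookrightarrow \iGpd$ is $h\!\left(B\!\left(\int_\C G\right)\right)$. Since $G$ is the terminal functor, its Grothendieck construction $\int_{\Emb^{op}} G$ is just $\Emb^{op}$ itself (each fiber is a point, so the total category collapses onto the base), and $B(\Emb^{op}) = B(\Emb)$ because the nerve of a monoid and of its opposite are homotopy equivalent. Chaining the equivalences yields $h(B(\Emb)) \simeq \Pi_\i(\HA_n) \simeq h(B\Gamma^n)$, and unwinding $h$ as "pass to weak homotopy type" produces the desired weak homotopy equivalence $B(\Emb) \to B\Gamma^n$.

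The main conceptual obstacle is purely bookkeeping rather than substance: one must be careful that the equivalence produced is genuinely induced by a natural map of classifying spaces $B(\Emb) \to B\Gamma^n$ and not merely an abstract equivalence of homotopy types. This map should come from the canonical functor $\alpha:\Emb \to \nMfd^{\et}$ composed with the universal local diffeomorphism into $\HA_n$ (Theorem \ref{thm:6.1.6}), realized on nerves. I would check that the abstract colimit identification is compatible with this concrete map, so that the statement matches Segal's original assertion of an explicit weak equivalence $B(\Emb) \to B\Gamma^n$. The remaining verifications — that $\int_{\Emb^{op}} G \cong \Emb^{op}$ and that $B(\Emb^{op}) \simeq B(\Emb)$ — are routine, so no serious difficulty is expected beyond tracking the naturality of the comparison map.
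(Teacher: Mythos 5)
Your proposal is correct and follows essentially the same route as the paper's proof: identify $\HA_n \simeq j^n_!\left(1\right)$ via Corollary \ref{cor:ex1}, compute $\Pi_\i\left(\HA_n\right)$ once as $h\left(B\Gamma^n\right)$ via Corollary \ref{cor:diffhom} and once as the colimit of the terminal functor on $\Emb^{op}$ via Theorem \ref{thm:main}, then identify that colimit with $h\left(B\left(\Emb\right)\right)$ using Corollary \ref{cor:thom2} together with the identification $\int_{\Emb^{op}} 1 \cong \Emb^{op}$ and $B\C \cong B\C^{op}$.

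The one place you diverge is the final step. You propose to realize the abstract equivalence by the concrete map coming from $\alpha$ and the universal local diffeomorphism of Theorem \ref{thm:6.1.6}, and then to verify that the colimit identifications are compatible with this map; this is the only part of your argument left unverified, and it is also more than the statement requires. The paper finishes instead with a purely formal observation: the chain of equivalences shows $B\left(\Emb\right)$ and $B\Gamma^n$ are isomorphic in the homotopy category, and since $B\left(\Emb\right)$ is the geometric realization of a simplicial set it is a CW-complex, hence cofibrant (and every space is fibrant in the Quillen model structure), so that isomorphism is automatically represented by an actual weak homotopy equivalence $B\left(\Emb\right) \to B\Gamma^n$. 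Your naturality check would buy a stronger conclusion --- that a specific canonical map is the weak equivalence, closer in spirit to Segal's original formulation --- but it requires tracking the identification through Thomason's theorem and the prolongation formalism, which is genuinely fiddly; the cofibrancy argument sidesteps all of it at the cost of only asserting existence.
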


\begin{proof}
Consider the $n^{th}$ Haefliger stack $\HA_n=\left[\Gamma^n\right].$ On one hand, by Corollary \ref{cor:diffhom}, we have that $$\Pi_\i\left(\HA_n\right) \simeq h\left(B \Gamma^n\right).$$ On the other hand, by Corollary \ref{cor:ex1}, we have that $$\HA_n \simeq j^n_!\left(1\right),$$ where $1$ is the terminal object of $\Shi\left(\nMfd^{\et}\right).$ By Theorem \ref{thm:main}, it follows that $\Pi_\i\left(j^n_!\left(1\right)\right)$ is the colimit of the constant functor $$\Emb^{op} \to \iGpd$$ with value the terminal object. By Corollary \ref{cor:thom2}, we have that this colimit may be expressed as the $\i$-groupoid associated to the classifying space of $$\int\limits_{\Emb^{op}}\!\!\!\!\!\!\!\!1 \cong \Emb^{op}.$$ For any small category $\C,$ there is a canonical homeomorphism $$B \C \cong B \C^{op},$$ so it follows that $\Pi_\i\left(j^n_!\left(1\right)\right)$ is equivalent to $h\left(B\left(\Emb\right)\right),$ and we conclude that $h\left(B \Gamma^n\right)$ and $h\left(B\left(\Emb\right)\right)$ are equivalent in the $\i$-category of $\i$-groupoids, so in particular, $B\left(\Emb\right)$ and $B \Gamma^n$ are isomorphic in the homotopy category of spaces. Since $B\left(\Emb\right)$ is the geometric realization of a simplicial set, it is a CW-complex, and hence cofibrant, so there must exist a weak homotopy equivalence $$B\left(\Emb\right) \to B\Gamma^n$$ exhibiting the desired equivalence.
\end{proof}

\begin{thm}\label{thm:segalsymp}
Let $Sp_{2n}$ denote the following category. The objects consist of symplectic forms $\omega$ on $\RR^{2n}$. An arrow $\omega \to \omega'$ between two such symplectic forms is an embedding $$\varphi:\R^{2n} \hookrightarrow \R^{2n}$$ such that $\varphi^*\omega'=\omega.$ Then there is a weak homotopy equivalence between classifying spaces:
$$B\left(Sp_{2n}\right) \to B \Gamma^{Sp}_{2n},$$ where $\Gamma^{Sp}_{2n}$ is the Lie groupoid from Example \ref{ex:sympgpd}.
\end{thm}

\begin{proof}
The category $Sp_{2n}$ is easily seen to be the Grothendieck construction of the functor $\mathcal{S}_{2n}|_{\operatorname{Emb}\left(\RR^{2n}\right)},$ where $\mathcal{S}_{2n}$ is the functor on $2n\mbox{-}\Mfd^{\et}$ sending every $2n$-manifold $M$ to its set of symplectic forms. By Example \ref{ex:symprol}, we have that $$j^{2n}_!\left(\mathcal{S}_{2n}\right)\simeq \left[\Gamma^{Sp}_{2n}\right].$$ By Theorem \ref{thm:main}, it follows that $\Pi_\i\left(j^{2n}_!\left(\mathcal{S}_{2n}\right)\right)$ is the colimit of the functor $$\mathcal{S}_{2n}|_{\operatorname{Emb}\left(\RR^{2n}\right)}:\operatorname{Emb}\left(\RR^{2n}\right)^{op} \to \iGpd.$$ By Corollary \ref{cor:thom2}, we have that this colimit may be expressed as the $\i$-groupoid associated to the classifying space of the Grothendieck construction of $\mathcal{S}_{2n}|_{\operatorname{Emb}\left(\RR^{2n}\right)}$, i.e. as $h\left(B\left(Sp_{2n}\right)\right).$ The rest of the proof is analogous to that of Theorem \ref{thm:segal}.
\end{proof}

\begin{thm}\label{thm:segalriem}
Let $\cR \mbox{iem}_n$ denote the following category. The objects consist of Riemannian metrics $g$ on $\RR^{n}$. An arrow $g \to g'$ between two such metrics is an embedding $$\varphi:\R^{n} \hookrightarrow \R^{n}$$ such that $\varphi^*g'=g.$ Then there is a weak homotopy equivalence between classifying spaces:
$$B\left(\cR \mbox{iem}_n\right) \to B \cR\Gamma^{n},$$ where $\cR\Gamma^{n}$ is the Lie groupoid from Example \ref{ex:riemgpd}.
\end{thm}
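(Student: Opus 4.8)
The plan is to run the argument of Theorem \ref{thm:segal} and Theorem \ref{thm:segalsymp} verbatim, now with the Riemannian metric sheaf $\mathcal{R}_n$ of Example \ref{ex:riemprol} playing the role that the terminal sheaf and the symplectic sheaf $\mathcal{S}_{2n}$ played there. The first step is to identify the category $\cR\mbox{iem}_n$ as the Grothendieck construction (Definition \ref{dfn:Groth1}) of the restriction $\mathcal{R}_n|_{\Emb}$ of the Riemannian sheaf along $\alpha^{op}$. Unwinding the contravariant Grothendieck construction of $\mathcal{R}_n|_{\Emb}:\Emb^{op} \to \Set$: its objects are pairs consisting of the unique object $\RR^n$ of $\Emb$ together with a Riemannian metric $g$ on $\RR^n$, i.e. just metrics $g$, while a morphism $g \to g'$ is an embedding $\varphi:\RR^n \hookrightarrow \RR^n$ together with an identification $\mathcal{R}_n\left(\varphi\right)\left(g'\right)=g$, that is $\varphi^*g'=g$. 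This is precisely the arrow data defining $\cR\mbox{iem}_n$.

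With this identification in hand, I would compute $\Pi_\i$ of the stack $j^n_!\left(\mathcal{R}_n\right)$ in two ways. By Example \ref{ex:riemprol} we have $j^n_!\left(\mathcal{R}_n\right) \simeq \left[\cR\Gamma^n\right]$, so on the one hand Corollary \ref{cor:diffhom} gives $\Pi_\i\left(\left[\cR\Gamma^n\right]\right) \simeq h\left(B\cR\Gamma^n\right)$. On the other hand, Theorem \ref{thm:main} identifies $\Pi_\i\left(j^n_!\left(\mathcal{R}_n\right)\right)$ with the colimit of the composite $\mathcal{R}_n \circ \alpha^{op} = \mathcal{R}_n|_{\Emb}:\Emb^{op} \to \iGpd$, and by Corollary \ref{cor:thom2} this colimit is $h\left(B\left(\cR\mbox{iem}_n\right)\right)$ in view of the Grothendieck-construction identification of the first step.

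Combining the two computations yields an equivalence $h\left(B\cR\Gamma^n\right) \simeq h\left(B\left(\cR\mbox{iem}_n\right)\right)$ in $\iGpd$, so $B\left(\cR\mbox{iem}_n\right)$ and $B\cR\Gamma^n$ are isomorphic in the homotopy category of spaces. Since $B\left(\cR\mbox{iem}_n\right)$ is the geometric realization of a simplicial set, hence a $CW$-complex and in particular cofibrant, this abstract isomorphism is represented by an honest weak homotopy equivalence $B\left(\cR\mbox{iem}_n\right) \to B\cR\Gamma^n$, exactly as in the concluding paragraph of the proof of Theorem \ref{thm:segal}.

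I do not expect a genuine obstacle in this argument, since it is a direct transcription of the two preceding theorems; the one point that demands care is keeping the variance straight. Because Riemannian metrics \emph{pull back} along embeddings rather than pushing forward, $\mathcal{R}_n$ is a sheaf on $\left(\nMfd^{\et}\right)^{op}$, and one must therefore use the contravariant form of the Grothendieck construction when matching $\cR\mbox{iem}_n$ to $\mathcal{R}_n|_{\Emb}$. Getting this orientation right is what ensures that the arrows $g \to g'$ in $\cR\mbox{iem}_n$ carry the condition $\varphi^*g'=g$ with the indices in the order stated, rather than its reverse.
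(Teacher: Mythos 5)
Your proposal is correct and is exactly the paper's argument: the paper proves Theorem \ref{thm:segalriem} by declaring it ``completely analogous'' to Theorem \ref{thm:segalsymp} with Example \ref{ex:riemprol} in place of Example \ref{ex:symprol}, and that symplectic proof consists precisely of your steps --- identify $\cR \mbox{iem}_n$ as the Grothendieck construction of $\mathcal{R}_n|_{\Emb}$, apply Theorem \ref{thm:main} and Corollary \ref{cor:thom2} to compute $\Pi_\i\left(j^n_!\left(\mathcal{R}_n\right)\right)$, compare with Corollary \ref{cor:diffhom} via Example \ref{ex:riemprol}, and conclude with the cofibrancy argument from Theorem \ref{thm:segal}. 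Your closing remark on variance (using the contravariant Grothendieck construction so that arrows satisfy $\varphi^*g'=g$) is a sound clarification of a point the paper leaves implicit, but it does not constitute a different route.
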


\begin{proof}
The proof is completely analogous to that of Theorem \ref{thm:segalsymp} with the role of Example \ref{ex:symprol} played by Example \ref{ex:riemprol}.
\end{proof}

We have the following generalization of Thomason's theorem proven by Lurie:

\begin{thm} \label{thm:thom3} (Corollary 3.3.4.6 of \cite{htt})\\
Let $F:\C \to \iGpd$ be a functor from a small $\i$-category $\C$ to the $\i$-category of $\i$-groupoids. Denote by $$\pi_F:\int_\C F \to \C$$ the left fibration classified by $F$ (the $\i$-analogue of the Grothendieck construction). Then the colimit of $F$ can be identified with $$h\left(\left|\mspace{4mu}\int_\C F \mspace{4mu}\right|\right)$$ where we have regarded $\int_\C F$ as a quasi-category, hence a simplicial set.
\end{thm}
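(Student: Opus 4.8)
The plan is to identify the colimit of $F$ with the image of the left fibration $\int_\C F$ under the ``groupoidification'' functor $L$, the left adjoint to the inclusion $\iota:\iGpd \hookrightarrow \iCat$ of $\i$-groupoids into $\i$-categories, and then to recognize $L$ concretely as $h\circ\left|\mspace{3mu}\cdot\mspace{3mu}\right|$ on the level of simplicial sets. First I would recall that the colimit of $F$ is by definition its left Kan extension $p_!F$ along the unique functor $p:\C \to \ast$, so that $p_! \dashv p^*$, where $p^*:\iGpd \to \Fun\left(\C,\iGpd\right)$ is the constant-diagram functor.

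Next I would invoke the straightening--unstraightening equivalence (Theorem 3.2.0.1 of \cite{htt}), which furnishes an equivalence $\Fun\left(\C,\iGpd\right) \simeq \mathrm{LFib}\left(\C\right)$ between functors $\C \to \iGpd$ and left fibrations over $\C$, carrying $F$ to $\pi_F:\int_\C F \to \C$; over the base $\ast$ it reduces to the identification of left fibrations over a point with $\i$-groupoids. Since this equivalence is natural in the base (\cite{htt}, \S 3.2), restriction along $p$ corresponds to pullback of left fibrations, so that $p^*$ is carried to the functor $Y \mapsto \left(\C \times Y \to \C\right)$.

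The heart of the argument is to identify the left adjoint of this pullback functor. For a left fibration $X \to \C$ and an $\i$-groupoid $Y$, a morphism over $\C$ from $X$ to $\C \times Y$ has its $\C$-component determined up to a contractible space of choices by $\pi_F$, so it carries the same data as its $Y$-component; hence
\begin{align*}
\mathrm{Map}_{/\C}\left(X,\C\times Y\right) &\simeq \mathrm{Map}_{\iCat}\left(X,Y\right)\\
&\simeq \mathrm{Map}_{\iGpd}\left(L\left(X\right),Y\right),
\end{align*}
the last equivalence because $Y$ is an $\i$-groupoid and $L \dashv \iota.$ Thus the left adjoint to pullback along $p$ is precisely $X \mapsto L\left(X\right)$, and since left adjoints of intertwined functors are intertwined, transporting $p_! \dashv p^*$ through unstraightening yields $p_!F \simeq L\left(\int_\C F\right)$; that is, the colimit of $F$ is $L\left(\int_\C F\right).$

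Finally I would identify $L$ explicitly. The inclusion $\iota$ is modelled by the inclusion of Kan complexes into quasi-categories, and for any simplicial set $K$ the canonical map $K \to \operatorname{Sing}\left|K\right|$ is a weak homotopy equivalence exhibiting $\operatorname{Sing}\left|K\right|$ as the localization of $K$ at all of its edges, so $L\left(K\right) \simeq \operatorname{Sing}\left|K\right|.$ Regarding $\int_\C F$ as a quasi-category, hence a simplicial set, its weak homotopy type is therefore $h\left(\left|\int_\C F\right|\right)$, with $h$ the functor of Lemma \ref{lem:h}, which is the desired identification. The main obstacle is of course the straightening--unstraightening equivalence together with its naturality in the base, which we import from \cite{htt}; granted this, the identification of $p_!$ with groupoidification is formal, and the concrete description of $L$ via geometric realization is a standard model-categorical fact.
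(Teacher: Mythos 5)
The paper offers no proof of this statement to compare against: it is quoted wholesale as Corollary 3.3.4.6 of \cite{htt} and used as a black box, so your proposal should be judged as a reconstruction of the cited result. As such it is correct. The chain $\operatorname{Map}_{/\C}\left(X,\C\times Y\right)\simeq \operatorname{Map}\left(X,Y\right)\simeq \operatorname{Map}\left(L\left(X\right),Y\right)$ really does exhibit groupoidification as the left adjoint to pullback along $p:\C\to\ast$: in the covariant model structure on $\Set^{\Delta^{op}}/\C$ every object is cofibrant, both $X$ and $\C\times Y$ are fibrant, and the simplicial hom over $\C$ into the product literally reduces to $Y^{X}$, which is a Kan complex since $Y$ is, so it computes the mapping space in $\mathrm{LFib}\left(\C\right)\simeq\Fun\left(\C,\iGpd\right)$; transporting $p_{!}\dashv p^{*}$ through unstraightening then gives $\colim F\simeq L\left(\int_\C F\right)$, and the identification $L\left(K\right)\simeq\operatorname{Sing}\left|K\right|$ is the standard comparison of fibrant replacements between the Joyal and Quillen model structures (note also that left fibrations over a point are exactly Kan complexes, which justifies your reduction over $\ast$). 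Your route is moreover close in spirit to Lurie's own: his argument likewise runs through straightening/unstraightening, but stays at the model-categorical level, where left Kan extension along $\C\to\Delta^{0}$ corresponds to the forgetful left Quillen functor $\Set^{\Delta^{op}}/\C\to\Set^{\Delta^{op}}$, so that the colimit is computed by the total space $\int_\C F$ itself followed by Kan fibrant replacement. What your phrasing buys is a model-independent formulation of the key adjunction; what it costs is that the two genuinely hard inputs --- the straightening equivalence and its compatibility with base change --- are still imported from \cite{htt}, so the argument is not more elementary than the citation it replaces, merely more self-contained in presentation.
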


\begin{cor}\label{cor:thom}
If $F:\C \to \Gpd$ is a weak functor from a small category $\C$ to the bicategory of groupoids, functors, and natural transformations,
then the homotopy colimit of the composite $$\C \stackrel{F}{\longrightarrow} \Gpd \hookrightarrow \iGpd$$ is $$h\left(B\left(\int_{\C} F\right)\right).$$
\end{cor}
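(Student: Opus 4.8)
The plan is to deduce this directly from Theorem \ref{thm:thom3}, reducing everything to the identification of the $\i$-categorical Grothendieck construction used there with the classical one of Definition \ref{dfn:Groth1}. The composite in the statement sends each groupoid $F(C)$ to its nerve, a Kan complex, and thus assembles into a diagram $$\tilde F : \C \stackrel{F}{\longrightarrow} \Gpd \hookrightarrow \iGpd;$$ computed in $\iGpd,$ the colimit of $\tilde F$ is precisely the homotopy colimit named in the statement. By Theorem \ref{thm:thom3} this colimit is $h\!\left(\left|\int_\C \tilde F\right|\right),$ where $\int_\C \tilde F \to \C$ is the left fibration classified by $\tilde F$ and $h$ is the functor of Lemma \ref{lem:h}. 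It therefore remains only to recognize this left fibration, up to equivalence, as the nerve of the classical Grothendieck construction $\int_\C F$ of Definition \ref{dfn:Groth1} and Remark \ref{rmk:Groth1}.

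The heart of the argument is thus the claim that $N\!\left(\int_\C F\right)$ is a left fibration over $N(\C)$ classified by $\tilde F.$ First I would note that the projection $\pi_F : \int_\C F \to \C$ is a Grothendieck opfibration every fiber of which is the groupoid $F(C).$ I would then verify the horn-filling conditions for its nerve: inner horns fill uniquely because $N\!\left(\int_\C F\right)$ is a nerve, and the only delicate case is the outer horn $\Lambda^2_0.$ Filling it amounts to solving $g_3 \circ F(h)(g_1) = g_2$ for $g_3,$ which is possible precisely because $F(h)(g_1)$ is a morphism in the groupoid $F(E)$ and hence invertible; this is exactly where the groupoid-valuedness of $F$ is used. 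A fiberwise inspection then shows that the resulting left fibration is classified by $\tilde F$ (its fiber over $C$ is $N(F(C))$ and its edges reproduce the transition functors $F(f)$ together with the coherence isomorphisms of $F$). Since left fibrations classified by a given functor are unique up to equivalence, $N\!\left(\int_\C F\right)$ is a model for $\int_\C \tilde F.$

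With this model in hand, applying Theorem \ref{thm:thom3} identifies the colimit of $\tilde F$ with $$h\!\left(\left|N\!\left(\int_\C F\right)\right|\right) = h\!\left(B\!\left(\int_\C F\right)\right),$$ which is the desired conclusion. I expect the main obstacle to be the treatment of the \emph{weak} functor case: one must check that the coherence data of the pseudofunctor $F$ is transported faithfully through both Grothendieck constructions so that $N(\pi_F)$ is genuinely classified by $\tilde F,$ and not merely by some strictification. A clean way to sidestep the bookkeeping is to rectify $F$ to an equivalent strict functor (every pseudofunctor into $\Gpd$ admits such a rectification) and invoke Thomason's theorem \cite{Thomason} in the strict case, after observing that both sides of the asserted equivalence are invariant under replacing $F$ by an equivalent strict functor.
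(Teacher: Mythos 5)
Your proposal takes essentially the same route as the paper: the paper's proof consists precisely of identifying the left fibration classified by the composite with the nerve $N\left(\int_\C F\right) \to N\left(\C\right)$ --- citing Proposition 2.1.1.3 of \cite{htt} for this identification rather than verifying the horn-filling conditions by hand --- and then applying Theorem \ref{thm:thom3}. Your additional work (the $\Lambda^2_0$ analysis and the rectification argument for the weak functor case) simply inlines and elaborates what the paper disposes of by citation, so the argument is correct and structurally identical to the paper's.
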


\begin{proof}
By Prop 2.1.1.3, the left fibration classified by the above composite is the nerve of the Grothendieck construction: $$N\left(\int_\C F\right) \to N\left(\C\right).$$ The result now follows from Theorem \ref{thm:thom3}.
\end{proof}

\begin{cor}\label{cor:thomasonlurie}
Suppose that $F$ in $\Shi\left(\nMfd^{\et}\right)$ is an $\i$-sheaf on $n$-manifolds and their local diffeomorphisms, and let $\X=j^n_!F$ be its associated $n$-dimensional \'etale differentiable $\i$-stack. Then  $\Pi_\i\left(\X\right)$ is equivalent to the $\i$-groupoid associated to the geometric realization of the underlying simplicial set of $$\int\limits_{\Emb}\!\!\!\!\!\!\!\!\alpha^*F,$$ i.e. 
$$h\left(\left|\mspace{4mu}\int\limits_{\Emb}\!\!\!\!\!\!\!\!\alpha^*F\mspace{4mu}\right|\right).$$
\end{cor}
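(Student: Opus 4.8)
The plan is to derive this as a direct consequence of Theorem~\ref{thm:main} together with Lurie's form of Thomason's theorem, Theorem~\ref{thm:thom3}; the corollary contains no genuinely new content beyond bookkeeping of variance. By Theorem~\ref{thm:main}, the $\i$-groupoid $\Pi_\i\left(\X\right)$ is the colimit of the composite $F \circ \alpha^{op}\colon \Emb^{op} \to \iGpd$, and this composite is by definition the $\i$-presheaf $\alpha^* F$ on $\Emb$. Hence it suffices to identify $\colim\left(\alpha^* F\right)$ with the stated geometric realization.

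First I would apply Theorem~\ref{thm:thom3} directly to the functor $\alpha^* F$, taking the small $\i$-category $\C$ of that theorem to be $\Emb^{op}$. This yields
$$\colim\left(\alpha^* F\right) \simeq h\left(\left| \int_{\Emb^{op}} \alpha^* F \right|\right),$$
where $\int_{\Emb^{op}} \alpha^* F$ denotes the left fibration over $\Emb^{op}$ classified by $\alpha^* F$, regarded as a quasi-category and hence as a simplicial set. Next I would reconcile this with the symbol $\int_{\Emb} \alpha^* F$ appearing in the statement: following the contravariant convention of Definition~\ref{dfn:Groth1}, since $\alpha^* F$ is a functor out of $\Emb^{op}$, the expression $\int_{\Emb} \alpha^* F$ stands for $\left(\int_{\Emb^{op}} \alpha^* F\right)^{op}$. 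Invoking the canonical homeomorphism $\left|K\right| \cong \left|K^{op}\right|$ on geometric realizations of simplicial sets (the same fact used in the proof of Theorem~\ref{thm:segal} in the guise $B\C \cong B\C^{op}$), applied to $K = \int_{\Emb^{op}} \alpha^* F$, gives
$$\left| \int_{\Emb} \alpha^* F \right| \cong \left| \int_{\Emb^{op}} \alpha^* F \right|.$$
Chaining the two displays with the equivalence $\Pi_\i\left(\X\right) \simeq \colim\left(\alpha^* F\right)$ from Theorem~\ref{thm:main} produces $\Pi_\i\left(\X\right) \simeq h\left(\left| \int_{\Emb} \alpha^* F \right|\right)$, as desired.

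There is no real obstacle in this argument; the only point demanding care is the passage between $\Emb$ and $\Emb^{op}$, which is resolved entirely by the opposite-realization homeomorphism above. As a sanity check, specializing $F$ to the terminal sheaf recovers exactly the computation in the proof of Theorem~\ref{thm:segal}, where $\int_{\Emb^{op}} 1 \cong \Emb^{op}$ and $B\Emb^{op} \cong B\Emb$. One could alternatively avoid the explicit homeomorphism by reading $\int_{\Emb} \alpha^* F$ as the right fibration over $\Emb$ classified by $\alpha^* F$ and observing that Theorem~\ref{thm:thom3} computes the colimit equally from the left fibration over $\Emb^{op}$ or from its opposite.
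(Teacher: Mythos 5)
Your proof is correct and is essentially the paper's own argument: the paper's proof is a one-line citation of Theorem \ref{thm:main} together with Lurie's form of Thomason's theorem, Theorem \ref{thm:thom3} (the printed reference to Corollary \ref{cor:thomasonlurie} itself is evidently a typo for Theorem \ref{thm:thom3}), which is exactly the combination you use. Your extra care with the $\Emb$ versus $\Emb^{op}$ bookkeeping via the homeomorphism $\left|K\right| \cong \left|K^{op}\right|$ merely makes explicit what the paper leaves implicit.
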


\begin{proof}
This is an immediate corollary of Theorem \ref{thm:main} and Corollary \ref{cor:thomasonlurie}.
\end{proof}

\begin{cor}\label{cor:thomasonlurie2}
Suppose that $F$ in $\St\left(\nMfd^{\et}\right)$ is a stack of groupoids on $n$-manifolds and their local diffeomorphisms, and let $\X=j^n_!F$ be its associated $n$-dimensional \'etale differentiable stack. Then  $\Pi_\i\left(\X\right)$ is equivalent to the $\i$-groupoid associated to the classifying space of the Grothendieck construction $$\int\limits_{\Emb}\!\!\!\!\!\!\!\!\alpha^*F,$$ i.e. 
$$h\left(B\left(\int\limits_{\Emb}\!\!\!\!\!\!\!\!\alpha^*F\mspace{4mu}\right)\right).$$
\end{cor}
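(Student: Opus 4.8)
The plan is to deduce this from Theorem \ref{thm:main} together with the Thomason-type identification of Corollary \ref{cor:thom}, in direct parallel with the deduction of Corollary \ref{cor:thomasonlurie}. The one feature specific to this statement is that $F$ is $1$-truncated---a stack of groupoids rather than a general $\i$-sheaf---so the abstract colimit produced by Theorem \ref{thm:main} can be rewritten as the classifying space of an honest Grothendieck construction of groupoids.

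First I would apply Theorem \ref{thm:main}. A stack of groupoids $F$ on $\nMfd^{\et}$ is in particular a $1$-truncated object of $\Shi\left(\nMfd^{\et}\right)$, so the theorem applies and gives $\Pi_\i\left(\X\right) \simeq \colim\left(F \circ \alpha^{op}\right)$, the colimit in $\iGpd$ of the composite $\Emb^{op} \to \left(\nMfd^{\et}\right)^{op} \to \iGpd$. Since $F \circ \alpha^{op} = \alpha^* F$ and $F$ takes values in ordinary groupoids, this composite factors as $\Emb^{op} \stackrel{\alpha^* F}{\longrightarrow} \Gpd \hookrightarrow \iGpd$, where $\alpha^* F$ is a weak functor from the small category $\Emb^{op}$ to the bicategory of groupoids.

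Next I would invoke Corollary \ref{cor:thom} with $\C = \Emb^{op}$ and this weak functor, identifying the colimit with $h\left(B\left(\int_{\Emb^{op}} \alpha^* F\right)\right)$. Finally, to match the stated form, I would note that by the contravariant convention of Definition \ref{dfn:Groth1} one has $\int_{\Emb} \alpha^* F = \left(\int_{\Emb^{op}} \alpha^* F\right)^{op}$, and that $B\C \cong B\C^{op}$ via the canonical homeomorphism already used in the proof of Theorem \ref{thm:segal}; hence the two classifying spaces coincide and $\Pi_\i\left(\X\right) \simeq h\left(B\left(\int_{\Emb} \alpha^* F\right)\right)$.

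The argument is essentially a chain of citations, so the only real care needed is in the variance bookkeeping: keeping track that $\alpha^* F$ is contravariant on $\Emb$, applying Corollary \ref{cor:thom} over the correct category, and then reconciling the $\int_\C$ versus $\int_{\C^{op}}$ convention with $B\C \cong B\C^{op}$. This is the step I would expect to be the main (mild) obstacle to getting right, but it introduces no genuinely new mathematics beyond the preceding results.
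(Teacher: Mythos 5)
Your proposal is correct and takes essentially the same route as the paper: the paper deduces the corollary from Corollary \ref{cor:thomasonlurie} (itself immediate from Theorem \ref{thm:main} and Lurie's Theorem \ref{thm:thom3}) together with Corollary \ref{cor:thom}, while you simply cite Theorem \ref{thm:main} directly and then apply Corollary \ref{cor:thom}, which is the same chain of reasoning with the intermediate corollary inlined. Your explicit handling of the variance convention $\int_{\Emb}\alpha^*F=\bigl(\int_{\Emb^{op}}\alpha^*F\bigr)^{op}$ and the homeomorphism $B\C\cong B\C^{op}$ is exactly the bookkeeping the paper leaves implicit via Definition \ref{dfn:Groth1} and the proof of Theorem \ref{thm:segal}.
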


\begin{proof}
This follows immediately from Corollary \ref{cor:thomasonlurie} and Corollary \ref{cor:thom}.
\end{proof}

\section{The Homotopy Type of Almost Free Global Quotients}\label{sec: Borel}

If $G \acts T$ is a continuous action of a topological group $G$ on a topological space $T,$ then we can model the homotopy quotient $T_G$ by the Borel construction:

\begin{dfn}
Let $BG$ be a classifying space for $G$ (e.g. $BG=||N\left(G\right)||$) and let $EG \to BG$ be a universal principal $G$-bundle (e.g. $EG=||N\left(G \ltimes G\right)||$), then the \textbf{Borel construction} of $G \acts T$ is the quotient of $X \times EG$ by the diagonal action of $G$, and is denoted by $X \times_G EG$.
\end{dfn}

\begin{rmk}
In general, the Borel construction, as a topological space, depends on the choice of classifying space and universal bundle, however as a weak homotopy type, it is well defined.
\end{rmk}

Going to the smooth setting, let $G \acts M$ be a smooth almost free action of a Lie group. (Recall that an almost free action is one for which each stabilizer group $G_x$ is discrete). Then by Example \ref{ex:action} and Proposition \ref{prop:folgpd}, we have that $$M//G:=\left[G\ltimes M\right]$$ is an \'etale differentiable stack of dimension $$\dim M//G= \dim M - \dim G.$$ 

\begin{rmk}
If $G$ is compact, then $M//G$ is moreover an orbifold. In fact, every effective (aka reduced) orbifold $\X$ is of this form, where $M$ can be taken to be the orthonormal frame bundle with respect to a Riemannian metric $g$ on $\X$ (this frame bundle turns out to be a manifold) and where $G$ can be taken to be $O\left(n\right),$ where $n$ is the dimension of $\X$ \cite{satake}.
\end{rmk}

The following result was proven by Schrieber for arbitrary Lie group actions:

\begin{prop}
For $G$ a Lie group acting smoothly on a manifold $M,$ one has $$\Pi_\i\left(M//G\right) \simeq h\left(M \times_G EG\right),$$ where $M \times_G EG$ is the Borel construction.
\end{prop}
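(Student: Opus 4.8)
The plan is to recognize that $M//G$ is a differentiable stack and then chain together Corollary \ref{cor:diffhom} with the standard comparison between the classifying space of an action groupoid and the Borel construction. First I would observe that, by definition, $M//G = \left[G \ltimes M\right]$ is the stack completion of the action groupoid $G \ltimes M$ of Example \ref{ex:action}. For an \emph{arbitrary} smooth action, $G \ltimes M$ is a Lie groupoid (no almost-freeness is needed for this; that hypothesis only enters later when one wants $M//G$ to be \'etale), so $M//G$ is a differentiable stack and thus falls under the hypotheses of Corollary \ref{cor:diffhom}.

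Next I would apply Corollary \ref{cor:diffhom} to the Lie groupoid $\G = G \ltimes M$, which gives $\Pi_\i\left(M//G\right) = \Pi_\i\left(\left[G \ltimes M\right]\right) \simeq h\left(B\left(G \ltimes M\right)\right)$, where $B\left(G \ltimes M\right) = ||N\left(G \ltimes M\right)||$ is the fat geometric realization of the simplicial nerve.

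It then remains only to identify $B\left(G \ltimes M\right)$ with the Borel construction $M \times_G EG$. Here I would invoke the comparison recorded in the Example following the definition of the classifying space of a Lie groupoid, namely that (with $EG = B\left(G \ltimes G\right)$) the space $B\left(G \ltimes M\right)$ is \emph{homeomorphic} to $M \times_G EG$, which is Proposition 4.3.32 of \cite{dcct}. A homeomorphism is in particular a weak homotopy equivalence, so the functor $h : \Top \to \iGpd$ of Lemma \ref{lem:h} carries it to an equivalence, whence $h\left(B\left(G \ltimes M\right)\right) \simeq h\left(M \times_G EG\right)$. Combining this with the previous step yields $\Pi_\i\left(M//G\right) \simeq h\left(M \times_G EG\right)$, as claimed.

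The only genuinely substantive input is the homeomorphism $B\left(G \ltimes M\right) \cong M \times_G EG$, which I would cite rather than reprove; everything else is formal, resting on the functoriality of $h$ and on Corollary \ref{cor:diffhom}. The one point to be careful about is that this homeomorphism uses the specific model $EG = B\left(G \ltimes G\right)$, whereas the Borel construction is only well defined as a weak homotopy type; since we immediately pass through $h$ and land in $\iGpd$, this model-dependence is harmless, which is precisely why the conclusion is phrased via $h$.
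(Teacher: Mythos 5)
Your proposal is correct, and it is worth noting that it is \emph{more} self-contained than the paper's own argument. The paper disposes of this proposition in one line by citing two external results, Propositions 4.3.32 and 4.4.13 of \cite{dcct}: the first supplies the homeomorphism $B\left(G \ltimes M\right) \cong M \times_G EG$, and the second plays exactly the role of identifying $\Pi_\i\left(M//G\right)$ with the homotopy type of $B\left(G \ltimes M\right)$, proven there in the cohesive-topos framework. You instead perform that first identification using the paper's own Corollary \ref{cor:diffhom} (which rests on the fat geometric realization theorem and Proposition \ref{prop:pi2}), so the only external input left is the homeomorphism of \cite{dcct}, Proposition 4.3.32, already recorded in the paper's example on classifying spaces. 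Both routes share the same two-step decomposition, but yours leverages the machinery the paper develops in Section \ref{sec:fat} rather than outsourcing it, which is arguably the more natural proof in context. Your attention to the two side points is also sound: no almost-freeness is needed since $G \ltimes M$ is a Lie groupoid for any smooth action (both source and target are submersions), so $M//G$ is a differentiable stack and Corollary \ref{cor:diffhom} applies; and the model-dependence of $EG = B\left(G \ltimes G\right)$ is indeed harmless once one passes through $h$ into $\iGpd$, where only the weak homotopy type survives.
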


\begin{proof}
This follows from Propositions 4.3.32 and 4.4.13 of \cite{dcct}.
\end{proof}

In other words, the weak homotopy type of $M//G$ agrees with the homotopy quotient of $M$ by $G$. In this section, we will show that when the action is almost free, there is another natural description of the homotopy quotient which is more geometric in nature. More precisely, we will construct a natural category of geometric data whose classifying space has the same homotopy type as $M \times_G EG.$

The category is as follows:

\begin{dfn}\label{dfn:Borel}
Let $G$ be a Lie group of dimension $k$ and let $M$ be a smooth $\left(n+k\right)$-manifold equipped with an almost free $G$-action. Define the \textbf{Borel category} of $G \acts M$, denoted as $\Bor$, to be the following category. The objects consist of smooth maps $$f:\R^n \to M$$ which are transverse to all the $G$-orbits. An arrow from $f$ to $f'$ consists of a pair $\left(\varphi,\tau\right)$ with $$\varphi:\R^n \hookrightarrow \R^n$$ an embedding, and with $$\tau:\R^n \to G$$ a smooth map such that for all $x \in \R^n$ we have $$f\left(x\right)=\tau\left(x\right) \cdot f'\left(\varphi\left(x\right)\right).$$ Composition $$f \stackrel{\left(\varphi,\tau\right)}{\longlonglongrightarrow} f' \stackrel{\left(\varphi',\tau'\right)}{\longlonglongrightarrow} f''$$ is defined as follows:
$$\left(\varphi',\tau'\right) \circ \left(\varphi,\tau\right):=\left(\varphi'\varphi,c\left(\tau,\tau',\varphi\right)\right)$$ where 
\begin{eqnarray*}
c\left(\tau,\tau',\varphi\right):\R^n &\to& G\\
x &\mapsto& \tau\left(x\right) \cdot \tau'\left(\varphi\left(x\right)\right).
\end{eqnarray*}
\end{dfn}

The main goal of this section is to prove the following theorem:

\begin{thm}\label{thm:Borel}
Let $G$ be a Lie group of dimension $k$ and let $M$ be a smooth $\left(n+k\right)$-manifold equipped with an almost free $G$-action. Then $$\Pi_\i\left(M//G\right) \simeq h\left(B\left(\Bor\right)\right).$$ In particular, $B\left(\Bor\right)$ has the same weak homotopy type as $M \times_G EG.$
\end{thm}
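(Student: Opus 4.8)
The plan is to present $M//G$ as an $n$-dimensional \'etale differentiable stack and feed it into the machinery of Corollary \ref{cor:thomasonlurie2}, thereby reducing the statement to an explicit identification of a Grothendieck construction with the Borel category. Since the action is almost free, Example \ref{ex:action} together with Proposition \ref{prop:folgpd} shows that $G \ltimes M$ is a foliation groupoid, so $M//G = \left[G \ltimes M\right]$ is an \'etale differentiable stack of dimension $n = \dim M - \dim G$. I would then set $F := y^{\et}\left(M//G\right)$, which lies in $\St\left(\nMfd^{\et}\right)$ because $M//G$ is a $1$-stack; by Theorem \ref{thm:6.1.3} one has $j^n_! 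F \simeq M//G$. Corollary \ref{cor:thomasonlurie2} now yields $\Pi_\i\left(M//G\right) \simeq h\left(B\left(\int_{\Emb} \alpha^* F\right)\right)$, so everything reduces to identifying the category $\int_{\Emb} \alpha^* F$, up to an equivalence (which suffices, as only the weak homotopy type of its classifying space matters), with $\Bor$.

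Next I would unwind $\alpha^* F$. By definition $\alpha^* F\left(\R^n\right) = \Hom^{\et}\left(\R^n, M//G\right)$ is the groupoid of local diffeomorphisms $\R^n \to M//G$. A map $\R^n \to M//G = \left[G \ltimes M\right]$ is classified by a principal $G$-bundle $P \to \R^n$ together with a $G$-equivariant map $P \to M$; since $\R^n$ is contractible, $P$ is trivializable, and a trivialization identifies such data with a smooth map $f : \R^n \to M$, with two such $f, f'$ becoming $2$-isomorphic exactly when there is a smooth $\tau : \R^n \to G$ with $f\left(x\right) = \tau\left(x\right) \cdot f'\left(x\right)$. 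The essential geometric input, and the step I expect to require the most care, is that the induced map $\R^n \to M//G$ is a local diffeomorphism if and only if $f$ is transverse to every $G$-orbit: via the complete-transversal description of Proposition \ref{prop:folgpd} and the criterion of Remark \ref{rmk:localdiffeo}, $M//G$ is locally the leaf space of the regular codimension-$n$ foliation of $M$ by orbits, so a map out of an $n$-manifold is \'etale precisely when it is a local transversal, which by the dimension count $n + k = \dim M$ is exactly orbit-transversality. Thus the full subgroupoid of $\Hom^{\et}\left(\R^n, M//G\right)$ spanned by trivialized representatives has objects the orbit-transverse maps $f$ and morphisms the gauge transformations $\tau$, and the inclusion into $\Hom^{\et}\left(\R^n, M//G\right)$ is an equivalence, essential surjectivity being the triviality of bundles over $\R^n$.

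I would then compute the Grothendieck construction of the weak functor $\alpha^* F : \Emb^{op} \to \Gpd$ following Definition \ref{dfn:Groth1} and Remark \ref{rmk:Groth1}. As $\Emb$ has a single object $\R^n$, the objects are those of $\alpha^* F\left(\R^n\right)$ and the $\Emb$-action is precomposition, $\alpha^* F\left(\varphi\right)\left(f'\right) = f' \circ \varphi$. A morphism $f \to f'$ is therefore a pair $\left(\varphi, \tau\right)$ with $\varphi : \R^n \hookrightarrow \R^n$ an embedding and $\tau$ an isomorphism in the fiber groupoid relating $f$ and $f' \circ \varphi$; since the fiber is a groupoid, this is precisely a smooth $\tau : \R^n \to G$ with $f\left(x\right) = \tau\left(x\right) \cdot f'\left(\varphi\left(x\right)\right)$, which is exactly an arrow of $\Bor$. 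Tracing the composition law of the Grothendieck construction (applying $\alpha^* F\left(\varphi\right)$ to the second isomorphism and composing) should reproduce the cocycle $c\left(\tau, \tau', \varphi\right)\left(x\right) = \tau\left(x\right) \cdot \tau'\left(\varphi\left(x\right)\right)$ of Definition \ref{dfn:Borel}, and I expect the coherence data of the weak functor to be precisely what makes this bookkeeping match on the nose. Restricting to trivialized objects then gives an equivalence of categories $\Bor \xrightarrow{\sim} \int_{\Emb} \alpha^* F$, whence $B\left(\Bor\right) \simeq B\left(\int_{\Emb} \alpha^* F\right)$ and so $\Pi_\i\left(M//G\right) \simeq h\left(B\left(\Bor\right)\right)$.

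Finally, the \emph{in particular} follows by comparison: the preceding Proposition of Schreiber gives $\Pi_\i\left(M//G\right) \simeq h\left(M \times_G EG\right)$, so $h\left(B\left(\Bor\right)\right) \simeq h\left(M \times_G EG\right)$ in $\iGpd$; since $h$ records the weak homotopy type, $B\left(\Bor\right)$ and $M \times_G EG$ have the same weak homotopy type. The principal obstacle throughout is the geometric translation in the second paragraph, namely verifying that local diffeomorphisms $\R^n \to M//G$ are exactly the orbit-transverse maps; the combinatorial matching of morphisms and composition with $\Bor$ is then routine bookkeeping, and the reduction to Corollary \ref{cor:thomasonlurie2} is formal.
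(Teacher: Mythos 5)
Your overall architecture is the same as the paper's: present $M//G$ as $j^n_!$ of the stack $F = y^{\et}\left(M//G\right)$ via Theorem \ref{thm:6.1.3}, apply Corollary \ref{cor:thomasonlurie2} to reduce to identifying $\int_{\Emb}\alpha^*F$ with $\Bor$, trivialize principal $G$-bundles over $\R^n$, and match objects, morphisms, and the composition cocycle $c\left(\tau,\tau',\varphi\right)$. That part of your bookkeeping, including the observation that only an equivalence of categories is needed and the derivation of the ``in particular'' from Schreiber's proposition, agrees with the paper.

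The genuine gap is at the step you yourself flag as the crux: the claim that a map $\R^n \to M//G$ is a local diffeomorphism if and only if the corresponding (trivialized) map $f:\R^n \to M$ is transverse to the $G$-orbits. Your justification --- that ``$M//G$ is locally the leaf space of the regular codimension-$n$ foliation of $M$ by orbits, so a map out of an $n$-manifold is \'etale precisely when it is a local transversal'' --- is a heuristic, not a proof, and as stated it is not quite correct: $M//G$ is not locally a leaf space or a manifold (its points carry discrete isotropy, and for a non-proper almost free action the coarse orbit space can be badly non-Hausdorff). What Proposition \ref{prop:folgpd} actually provides is a presentation of $M//G$ by an \'etale groupoid $\left(G\ltimes M\right)_T$ on a complete transversal $T$, and to use Remark \ref{rmk:localdiffeo} you would still have to produce, from an orbit-transverse $f$, a smooth functor out of some \'etale presentation of $\R^n$ into $\left(G\ltimes M\right)_T$ that is a local diffeomorphism on objects; this requires flowing points of $f\left(\R^n\right)$ along orbits into $T$ (a holonomy-type argument) which you do not supply, and conversely one must show that an \'etale map forces transversality. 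The paper closes exactly this hole with two precise statements: Proposition \ref{prop:Z}, which shows (via the pullback square from Corollary \ref{cor:buncon} and the pair groupoid of $P \to N$) that $N \to M//G$ is a local diffeomorphism if and only if the equivariant moment map $P \to M$ is one; and Lemma \ref{lem:submersion}, an explicit tangent-space computation with the infinitesimal action and the adjoint representation, showing that $\rho\circ\left(id\times f\right):G\times\R^n \to M$ is a submersion if and only if $f$ is orbit-transverse --- which, by the dimension count $\dim\left(G\times\R^n\right)=\dim M$, is equivalent to being a local diffeomorphism. To complete your proof you should either prove your leaf-space/transversal claim rigorously (which is real work, comparable to the paper's Lemma \ref{lem:submersion}) or replace that paragraph with these two lemmas.
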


The main idea is as follows: The \'etale differentiable stack $M//G$ is $n$-dimensional, so there exists a stack $\Z$ on the site $\nMfd^{\et}$ of smooth $n$-manifolds and their local diffeomorphisms, such that $$j^n_!\left(\Z\right) \simeq M//G.$$ In fact, by Theorem \ref{thm:6.1.3}, $\Z$ is the stack which assigns to each $n$-manifold $N$ the groupoid of local diffeomorphisms $N \to M//G$. After getting a good understanding of this stack, we can apply Corollary \ref{cor:thomasonlurie} to compute its homotopy type.

\begin{prop}\label{prop:Z}
Let $\Z$ be the stack which assigns to each $n$-manifold $N$ the groupoid of local diffeomorphisms $N \to M//G,$ then it can be described concretely as follows: The objects of $\Z\left(N\right)$ are principal $G$-bundles $P \to N$ together with an equivariant local diffeomorphism $P \to M.$ The arrows are maps of principal $G$-bundles over $N$ which commute over $M$.
\end{prop}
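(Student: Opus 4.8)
The plan is to identify, for each $n$-manifold $N$, the groupoid of local diffeomorphisms $N \to M//G$ with the groupoid of principal $G$-bundles $P \to N$ equipped with an equivariant local diffeomorphism $P \to M$, naturally in $N$ (with respect to local diffeomorphisms of the source). First I would unwind the definition of the stack $M//G = \left[G \ltimes M\right]$ using the standard groupoid-principal-bundle (Hilsum--Skandalis) description of maps into a quotient stack: a morphism $N \to M//G$ corresponds to a principal $G$-bundle $P \to N$ together with a $G$-equivariant smooth map $\mu:P \to M$, and a $2$-morphism corresponds to an isomorphism of such bundles commuting with the maps to $M$. This is the well-known description of $\Hom\left(N, \left[G\ltimes M\right]\right)$ and I would cite the calculus-of-fractions description of differentiable stacks (\cite{Dorette}) for it.

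The crux is then to determine \emph{which} such morphisms are local diffeomorphisms in the sense of the stacky definition used here (Remark \ref{rmk:localdiffeo}), and to show this condition translates exactly into the requirement that $\mu:P \to M$ be a local diffeomorphism. By Remark \ref{rmk:localdiffeo}, $N \to M//G$ is a local diffeomorphism precisely when it arises from a smooth functor $F:\G \to G\ltimes M$ out of an \'etale Lie groupoid $\G$ with $\left[\G\right]\simeq N$, such that $F_0$ is a local diffeomorphism on objects. Here I would take $\G$ to be the C\v{e}ch groupoid $N_{\mathcal U}$ (Example \ref{ex:cech}) associated to a trivializing cover of $P$, or more invariantly the pair groupoid $\operatorname{Pair}\left(P \to N\right)$ (Example \ref{ex:pair}); the object map of the associated functor to $G\ltimes M$ is exactly $\mu$ (read on local sections), so the local-diffeomorphism condition on the stack map is equivalent to $\mu$ being a local diffeomorphism. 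I would note that since $\dim N = n = \dim M - \dim G$ and $P \to N$ is a principal $G$-bundle, $\dim P = n + k = \dim M$, so it is at least dimensionally sensible for $\mu$ to be a local diffeomorphism, and moreover that $G$-equivariance forces $\mu$ to be transverse to the $G$-orbits. The main obstacle is precisely this matching step: carefully verifying that ``local diffeomorphism of stacks'' unwinds, via the atlas $P \to M//G$ (note $P$ itself, being $\left[\operatorname{Pair}\left(P\to N\right)\right]$-presenting, maps to $M//G$), to ``$\mu$ is a local diffeomorphism of manifolds,'' and conversely that any equivariant local diffeomorphism $P \to M$ yields a local diffeomorphism of stacks. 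I expect this to require tracking the equivalence $y^{\et}$ of Theorem \ref{thm:6.1.3} compatibly with the bundle picture.

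Finally I would check functoriality: a local diffeomorphism $N' \to N$ pulls back $\left(P \to N, \mu\right)$ to $\left(P\times_N N' \to N', \mu \circ \operatorname{pr}\right)$, and the pulled-back $\mu$ remains an equivariant local diffeomorphism, so the assignment $N \mapsto \Z\left(N\right)$ is a stack on $\nMfd^{\et}$ and the identification is natural. The arrows on both sides manifestly correspond: a $2$-morphism of stack-maps is an isomorphism of the classifying bundles commuting with the maps to $M//G$, which is exactly an isomorphism of principal $G$-bundles over $N$ commuting with the maps to $M$. This completes the concrete description, and I would then defer the actual homotopy-type computation to the subsequent application of Corollary \ref{cor:thomasonlurie}.
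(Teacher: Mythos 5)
Your first step (identifying $\Z\left(N\right)$ with the groupoid of principal $G$-bundles $P \to N$ equipped with an equivariant map $\mu:P \to M$) is correct and agrees with the paper, which deduces it from the description of principal $G\ltimes M$-bundles and Corollary \ref{cor:buncon}. The gap is in the step you yourself flag as the crux. The criterion you extract from Remark \ref{rmk:localdiffeo} --- an \emph{\'etale} Lie groupoid $\G$ with $\left[\G\right]\simeq N$ and a smooth functor $F:\G \to G\ltimes M$ with $F_0$ a local diffeomorphism --- can never be satisfied: the object space of $G \ltimes M$ is $M$, of dimension $n+k$, whereas any \'etale groupoid presenting the $n$-manifold $N$ has $n$-dimensional object space. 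In particular your C\v{e}ch-groupoid instantiation fails concretely: the functor $N_{\mathcal{U}} \to G\ltimes M$ built from local sections $s_\alpha$ of $P$ has object map $\coprod_\alpha \mu\circ s_\alpha : \coprod_\alpha U_\alpha \to M$, a map from an $n$-manifold to an $\left(n+k\right)$-manifold, which is never a local diffeomorphism when $k>0$. Only your other option works: $\operatorname{Pair}\left(P \to N\right)$ has object space $P$ and the induced functor to $G \ltimes M$ is literally $\mu$ on objects --- and this is exactly the paper's argument --- but $\operatorname{Pair}\left(P\to N\right)$ is \emph{not} \'etale (its source fibers are copies of $G$), so to run it you must drop the \'etaleness requirement you imposed, i.e.\ use that a presentation of $f$ by \emph{any} Lie groupoid functor which is a local diffeomorphism on objects exhibits $f$ as a local diffeomorphism of stacks.

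Even granting that repair, the pair-groupoid argument only proves one implication ($\mu$ a local diffeomorphism $\Rightarrow$ $f$ a local diffeomorphism). For the converse the paper does not ``track $y^{\et}$'' as you propose to; it uses the pullback square exhibiting $P \simeq N\times_{M//G} M$: since $\mu$ is the pullback of $f$ along the atlas $M \to M//G$ and local diffeomorphisms of stacks are stable under pullback, $f$ being a local diffeomorphism forces $\mu$ to be one. Your proposal leaves this direction entirely deferred, so the claimed equivalence is not established as written. A further (tangential) error: $G$-equivariance does \emph{not} force $\mu$ to be transverse to the $G$-orbits --- the equivariant map induced by a constant map $\RR^n \to M$ is a counterexample. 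The true statement, which is the paper's Lemma \ref{lem:submersion} and is only needed later in the proof of Theorem \ref{thm:Borel}, is that an equivariant map $G\times \RR^n \to M$ is a submersion if and only if the underlying map $\RR^n \to M$ is transverse to all the orbits; transversality plays no role in Proposition \ref{prop:Z} itself.
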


\begin{proof}
It is standard that $M//G\left(N\right)$ can be described as the groupoid of principal $G$-bundles $P \to N$ together with an equivariant map $P \to M,$ but we will explain this fact for the reader's convenience. Firstly, notice that the source map of the action groupoid $G \ltimes M$ is simply projection onto $M,$ so one sees directly that a $G\ltimes M$-manifold $E$ (Definition \ref{def:Gmfd}) is the same as a $G$-manifold equipped with an equivariant map $E \to M,$ where the equivariant map in question is the moment map. By a similar argument, one sees that a principal $G \ltimes M$-bundle over $N$ (Definition \ref{dfn:Gbundle}) is the same as a principal $G$-bundle $P \to N$ equipped with a $G$-equivariant map $P \to M.$ Finally, Corollary \ref{cor:buncon} tells us that the groupoid of such principal $G \ltimes M$-bundles over $N$ is equivalent to $M//G\left(N\right).$ By the same corollary, if $f:N \to M//G$, then we have a pullback diagram
$$\xymatrix{P \ar[r]^-{\mu}  \ar[d]_-{\pi} & M \ar[d]\\
N \ar[r]^-{f} & M//G,}$$
 so if $f$ is a local diffeomorphism, so is $\mu$. Conversely, if $\mu$ is a local diffeomorphism, observe that the morphism $f$ is induced by a morphism of Lie groupoids $$Pair\left(\pi\right) \to G\ltimes M$$ (see Example \ref{ex:pair}) which is $\mu$ on objects, and hence $f$ is also a local diffeomorphism, by Remark \ref{rmk:localdiffeo}.
\end{proof}

\begin{lem}\label{lem:submersion}
Suppose that $\rho:G \times M \to M$ is a smooth action of a Lie group and that $f:N \to M$ is a smooth map. Then the composite $$G \times N \stackrel{id \times f}{\longlongrightarrow} G \times M \stackrel{\rho}{\longrightarrow} M$$ is a submersion if and only if $f$ is transverse to all the $G$-orbits of $M$.
\end{lem}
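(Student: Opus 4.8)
The plan is to set $\Phi := \rho \circ \left(id \times f\right)$, so that $\Phi\left(g,x\right) = g \cdot f\left(x\right)$, and to match surjectivity of $d\Phi$ at each point with the transversality condition on $f$. First I would exploit equivariance: writing $L_g:M \to M$ for the diffeomorphism given by the action of $g$, one has $\Phi\left(gh, x\right) = L_g\left(\Phi\left(h,x\right)\right)$, so $\Phi$ intertwines left translation on the $G$-factor of $G \times N$ with the $G$-action on $M$. Since $L_g$ and left translation are diffeomorphisms, $d\Phi_{\left(g,x\right)}$ is surjective if and only if $d\Phi_{\left(e,x\right)}$ is; this reduces the whole question to checking surjectivity along the slice $\{e\} \times N$ and avoids carrying the extra factor $d\left(L_g\right)$ that appears at a general $g$.

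Next I would compute the differential on that slice. Splitting $T_{\left(e,x\right)}\left(G \times N\right) = \mathfrak{g} \oplus T_x N$, the two partial derivatives give
$$d\Phi_{\left(e,x\right)}\left(v, w\right) = d\left(a_{f\left(x\right)}\right)_e\left(v\right) + df_x\left(w\right),$$
where $a_m:G \to M$, $g \mapsto g \cdot m$, is the orbit map through $m$. The classical fact I would invoke here is that $\operatorname{im}\, d\left(a_m\right)_e = T_m\left(G \cdot m\right)$: the values of the fundamental vector fields span the tangent space to each orbit. Consequently the image of $d\Phi_{\left(e,x\right)}$ is exactly
$$T_{f\left(x\right)}\left(G \cdot f\left(x\right)\right) + df_x\left(T_x N\right) \subseteq T_{f\left(x\right)} M.$$

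Finally I would conclude by unwinding definitions. The displayed sum equals $T_{f\left(x\right)} M$ precisely when the orbit tangent space and $df_x\left(T_x N\right)$ together span $T_{f\left(x\right)} M$, which is exactly the statement that $f$ is transverse at $x$ to the orbit $G \cdot f\left(x\right)$. As $x$ ranges over $N$, and by the equivariance reduction this covers all of $G \times N$, it follows that $\Phi$ is a submersion if and only if $f$ is transverse to every orbit it meets, i.e. to all $G$-orbits of $M$.

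Since the argument is a single differential computation, there is no serious obstacle; the only steps demanding care are the equivariance reduction to the slice $\{e\} \times N$ and the identification of $\operatorname{im}\, d\left(a_m\right)_e$ with the orbit tangent space, both of which must be handled so that the pointwise equivalence between surjectivity of $d\Phi$ and transversality of $f$ is exact rather than merely one-directional.
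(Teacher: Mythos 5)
Your proof is correct, and it reaches the same pointwise identity as the paper — that the image of the differential of $\Phi=\rho\circ(id\times f)$ at a point $(e,x)$ is $T_{f(x)}\bigl(G\cdot f(x)\bigr)+df_x\left(T_xN\right)$, so that surjectivity there is exactly transversality at $x$ — but it organizes the reduction to the identity slice differently, and more efficiently. You dispose of general points $(g,x)$ at the outset via the intertwining relation $\Phi\circ(L_g\times id)=\rho_g\circ\Phi$, which immediately gives that $d\Phi_{(g,x)}$ is surjective iff $d\Phi_{(e,x)}$ is, since $L_g\times id$ and $\rho_g$ are diffeomorphisms. The paper instead works infinitesimally at a general point: it computes $T_{(g,x)}(\rho)\left(gX,0\right)=\widehat{\operatorname{Ad}(g)(X)}_{gx}$ by differentiating the curve $t\mapsto g e^{tX}\cdot x$, obtains an explicit formula for the image of $d\Phi_{(g,p)}$ involving $\operatorname{Ad}(g)$ and $T(\rho_g)$, specializes to $g=e$ to get the transversality equivalence, and then proves the remaining implication (submersion at $(e,x)$ implies submersion at $(g,x)$) as a separate argument using that $\rho_g$ preserves orbits. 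Your equivariance trick replaces both the exponential/adjoint computation and that separate converse step; what the paper's longer route buys is the explicit description of the image of the differential at arbitrary $g$, which is the same equivariance carried out by hand at the level of tangent vectors. Both arguments rest on the same classical input, namely that the fundamental vector fields span the tangent spaces to the orbits (equivalently, $\operatorname{im}\,d\left(a_m\right)_e=T_m\left(G\cdot m\right)$), which the paper also states without proof, so you are not assuming anything the paper does not.
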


\begin{proof}
Note that any tangent vector $V \in T_g G$ is of the form $g \cdot X=T_e\left(L_g\right)\left(X\right)$ for a unique $X \in \mathfrak{g},$ where $$L_g:G \stackrel{\cong}{\longrightarrow} G$$ is left-multiplication. Consider the smooth map
$$\xymatrix@R=0.1in@C=1.2cm{G \times M \ar[r]^-{L_g \times id} & G \times M \ar[r]^-{\rho} & M\\
\left(e,x\right) \ar@{|-{>}}[r] & \left(g,x\right) \ar@{|-{>}}[r] & g\cdot x.}$$
By the chain rule we have 

\begin{equation}\label{eq:star}
\resizebox{4.6in}{!}{$T_{\left(e,x\right)}\left(\rho \circ \left(L_g \times id\right)\right) \left(X,0\right)=T_{\left(g,x\right)}\left(\rho\right)\left(T_{\left(e,x\right)}\left(L_g \times id\right)\left(X,0\right)\right)=T_{\left(g,x\right)}\left(\rho\right)\left(\left(gX,0\right)\right).$
}
\end{equation}
Consider the curve $$\gamma\left(t\right)=\left(e^{tX},x\right)$$ through $\left(e,x\right)$ in $G \times M,$ where $e^{tX}$ is the exponential map of the Lie group applied to $X$. The tangent vector of this curve at $\left(e,x\right)$ is $\left(X,0\right).$ Notice that we have that
\begin{eqnarray*}
\rho\circ \left(L_g \times id\right)\left(\gamma\left(t\right)\right)&=&g \cdot e^{tX} \cdot x\\
&=&\left(ge^{tX}g^{-1}\right) \cdot gx\\
&=& e^{t Ad\left(g\right)\left(X\right)} \cdot gx,
\end{eqnarray*}
where $Ad$ denotes the adjoint action of $G$ on $\mathfrak{g}$,
from which it follows that $$\frac{d}{dt}\left(\rho\circ \left(L_g \times id\right)\left(\gamma\left(t\right)\right)\right)=\widehat{Ad\left(g\right)\left(X\right)}_{gx}=T_{\left(e,x\right)}\left(\rho \circ\left(L_g  \times id\right)\right)\left(X,0\right),$$ where we have used the induced infinitesimal action of the Lie algebra
\begin{eqnarray*}
\rho_*:\mathfrak{g} &\to& \mathfrak{X}\left(M\right)\\
Y &\mapsto& \widehat{Y}.
\end{eqnarray*}
From equation (\ref{eq:star}), it follows that
$$T_{\left(g,x\right)}\left(\rho\right)\left(gX,0\right) = \widehat{Ad\left(g\right)\left(X\right)}_{gx}.$$
Now let $Y \in T_x\left(M\right)$, and suppose that $s\left(t\right)$ is a curve in $M$ such that $s\left(0\right)=x$ and whose tangent vector at $x$ is $Y.$ Let $$\tilde s\left(t\right)=\left(g,s\left(t\right)\right).$$ Then $$\rho\left(\tilde s\left(t\right)\right)=g\cdot s\left(t\right)$$ so it follows that $$T_{\left(g,x\right)}\left(\rho\right)\left(0,Y\right)=T_x\left(\rho_g\right)\left(Y\right),$$ where $$\rho_g=\rho\left(g,\mspace{3mu} \cdot \mspace{3mu}\right):M \stackrel{\cong}{\longlongrightarrow} M.$$ So by the chain rule and linearity we have that for $p \in N,$ $Z \in T_pN$ and $X \in \mathfrak{g}$ that:
$$T_{\left(g,p\right)}\left(\rho\circ \left(id \times f\right)\right)\left(gX,Z\right)=\widehat{Ad\left(g\right)\left(X\right)}_{gf\left(p\right)}+T_{f\left(p\right)}\left(\rho_g\right)\left(T_p\left(f\right)\left(Z\right)\right).$$
Now for any point $x \in M,$ if $\O_x$ denotes its $G$-orbit, then we have
$$T_x\left(\O_x\right) = \left\{\widehat{X}_x\mspace{3mu}|\mspace{3mu}X \in \mathfrak{g}\right\},$$ and since $$Ad\left(g\right):\mathfrak{g} \to \mathfrak{g}$$ is an isomorphism of vector spaces, it follows that

\begin{equation}\label{eq:image}
\operatorname{Im}\left(T_{\left(g,p\right)}\left(\rho\circ \left(id \times f\right)\right)\right)= T_{gf\left(p\right)}\left(\O_{f\left(p\right)}\right)+T_{f\left(p\right)}\left(\rho_g\right)\left(\operatorname{Im}\left(T_p\left(f\right)\right)\right).
\end{equation}
When $g=e$ this simplifies to
$$\operatorname{Im}\left(T_{\left(e,p\right)}\left(\rho\circ \left(id \times f\right)\right)\right)= T_{f\left(p\right)}\left(\O_{f\left(p\right)}\right)+\operatorname{Im}\left(T_p\left(f\right)\right).$$ So by definition, we see that $f$ is transverse to all the $G$-orbits if and only if $\rho\circ \left(id \times f\right)$ is a submersion at all points of the form $\left(e,x\right).$

Clearly, if $\rho\circ \left(id \times f\right)$ is a submersion, then it is a submersion at $\left(e,x\right)$ for all $x.$ We will complete our proof by showing the converse. Suppose that $\rho\circ \left(id \times f\right)$ is a submersion at $\left(e,x\right).$ We will show that $\rho\circ \left(id \times f\right)$ is also a submersion at $\left(g,x\right).$ Let $W \in T_{gf\left(p\right)}M,$ then we have that $$\rho_g:M \stackrel{\cong}{\longlongrightarrow} M,$$ so that $$T_{f\left(p\right)}\left(\rho_g\right):T_{f\left(p\right)}M \stackrel{\cong}{\longlongrightarrow} T_{gf\left(p\right)}M,$$ and hence $$W=T_{f\left(p\right)}\left(\rho_g\right)\left(V\right),$$ for a unique $V \in T_{f\left(p\right)}M=T_{e\cdot f\left(p\right)}M.$ Since $\rho\circ \left(id \times f\right)$ is a submersion at $\left(e,x\right),$ this $V$ can be expressed as
$$V=Z+T_p\left(f\right)\left(Y\right)$$ with $Y \in T_p N$ and $Z \in T_{f\left(p\right)} \O_{f\left(p\right)},$ and so $$W=T_{f\left(p\right)}\left(\rho_g\right)\left(Z\right)+T_{f\left(p\right)}\left(\rho_g\right)\left(\left(T_p\left(f\right)\left(Y\right)\right)\right).$$ Notice that since $\rho_g$ preserves $G$-orbits, we must have $$T_{f\left(p\right)}\left(\rho_g\right)\left(Z\right) \in T_{gf\left(p\right)} \O_{f\left(p\right)}$$ and hence by (\ref{eq:image}), $W$ is in the image of $T_{\left(g,p\right)}\left(\rho\circ \left(id \times f\right)\right).$
\end{proof}

Proof of Theorem \ref{thm:Borel}:

\begin{proof}
Let $\Z$ be the stack of groupoids from Proposition \ref{prop:Z}. Then by Theorem \ref{thm:6.1.3}, $j^n_!\Z \simeq M//G$ and so by  Corollary \ref{cor:thomasonlurie2}, we have that $$\Pi_\i\left(M//G\right) \simeq h\left(B\left(\int\limits_{\Emb}\!\!\!\!\!\!\!\!\alpha^*\Z\mspace{4mu}\right)\right).$$ We will now show that we have an equivalence of categories $$\Bor \simeq \int\limits_{\Emb}\!\!\!\!\!\!\!\!\alpha^*\Z.$$ By Proposition \ref{prop:Z}, it follows that the above Grothendieck construction can be described as the category whose objects are pairs $\left(P,\psi\right)$ with $P \to \RR^n$ a principal $G$-bundle and $\psi:P \to M$ a $G$-equivariant local diffeomorphism. A morphism $$\left(P,\psi\right) \to \left(P',\psi'\right)$$ between such pairs is a pair $\left(\varphi,\theta\right)$ consisting of an embedding $$\varphi:\RR^n \hookrightarrow \RR^n$$ and an isomorphism $$\theta:P \to \varphi^*P'$$ of principal $G$-bundles over $\RR^n$ such that the composite $$P \stackrel{\theta}{\longlongrightarrow} \varphi^*P' \to P' \stackrel{\psi'}{\longlongrightarrow} M$$ is $\psi$. Note that a more natural description of the morphisms $$\left(P,\psi\right) \to \left(P',\psi'\right)$$ is that such a morphism is pair $\left(\varphi,\sigma\right),$ where $$\varphi:\RR^n \hookrightarrow \RR^n$$ is an embedding and $\sigma$ is a
 morphism of principal $G$-bundles over $\varphi$ such that the following diagram commutes:
\begin{equation}\label{eq:principal}
\xymatrix{
P \ar[d]  \ar@/^2pc/[rr]^-{\psi} \ar[r]^-{\sigma} & P' \ar[d] \ar[r]^-{\psi'} & M\\
\RR^n \ar@{^{(}->}[r]^-{\varphi} & \RR^n. & }
\end{equation}

Every principal $G$-bundle over $\RR^n$ is trivializable, so for every pair $\left(P,\psi\right),$ we can find $$\theta:G \times \RR^n \stackrel{\cong}{\longlongrightarrow} P$$ an isomorphism of principal $G$-bundles over $\RR^n,$ and hence we have an isomorphism $$\left(id,\theta\right):\left(G \times \RR^n,\psi \circ \theta\right)\stackrel{\cong}{\longlongrightarrow} \left(P,\psi\right).$$ It follows that we can restrict to the full subcategory on those objects of the form $\left(G \times \RR^n,\psi\right),$ i.e. those objects $\left(P,\psi\right),$ with $P=G\times \RR^n$ the trivial principal $G$-bundle, and end up with an equivalent category. Lets denote this full subcategory of by $\sD$.

Observe that there is natural bijection between smooth maps $f:\RR^n \to M$ and $G$-equivariant maps $G \times \RR^n \to M$ given by the assignment $$f \mapsto G \times \RR^n \stackrel{id \times f}{\longlongrightarrow} G \times M \stackrel{\rho}{\longrightarrow} M,$$ where $\rho:G \times M \to M$ is the $G$-action. Since $\dim\left(G \times \RR^n\right)=\dim\left(M\right),$ the composite $\rho \circ \left(id \times f\right)$ is a local diffeomorphism if and only if it is a submersion, so it follows from Lemma \ref{lem:submersion} that the above bijection restricts to a bijection between smooth maps $f:M \to \RR^n$ which are transverse to all the $G$-orbits and $G$-equivariant local diffeomorphisms $G \times \RR^n \to M.$ Hence the objects of $\sD$ can be taken to be morphisms $f:\RR^n \to M$ which are transverse to all the $G$-orbits. For such a smooth map $f$, denote by $\tilde f$ the associated $G$-equivariant local diffeomorphism $\rho \circ \left(id \times f\right).$

Observe furthermore that there is a natural identification $$C^\infty\left(\RR^n,G\right)\cong \Hom_{\varphi}\left(G\times\RR^n,G\times\RR^n\right),$$ where the right hand side is the set of morphisms of principal $G$-bundles which induce the embedding $\varphi$ on the base $\RR^n,$
given by the assignment
\begin{eqnarray*}
C^\infty\left(\RR^n,G\right) &\to& \Hom_{\varphi}\left(G\times\RR^n,G\times\RR^n\right)\\
\tau &\mapsto& \tilde \tau:\left(g,x\right) \mapsto \left(g\tau\left(x\right),\varphi\left(x\right)\right).
\end{eqnarray*}
If $f$ and $f'$ are smooth maps $\RR^n \to M$ which are transverse to the $G$-orbits, and $\tilde f$ and $\tilde f'$ are the induced $G$-equivariant local diffeomorphisms $$G \times \RR^n \to M,$$ for a fixed $\varphi:\RR^n \hookrightarrow \RR^n,$ a given $\tau:\RR^n \to G$ has the property that the pair $\left(\varphi,\tilde \tau\right)$ is a morphism $\left(G\times\RR^n,\tilde f\right) \to \left(G \times \RR^n, \tilde f'\right)$ in $\sD$ (as in (\ref{eq:principal})) if and only if, for all $x \in \RR^n,$ 
\begin{equation}\label{eq:condition}
\tau\left(x\right) f'\left(\varphi\left(x\right)\right) = f\left(x\right).
\end{equation}
So we have seen that we can identify the objects of $\sD$ with smooth maps $$f:\RR^n \to M$$ which are transverse to all the $G$-orbits, and can identify morphisms $f \to f'$ between two smooth maps as pairs $\left(\varphi,\tau\right)$ with $\varphi:\RR^n \hookrightarrow \RR^n$ a smooth embedding and $\tau:\RR^n \to G$ such that for all $x \in \RR^n,$ equation (\ref{eq:condition}) holds. Suppose now that we have two composable pairs of arrows in $\sD$: $$f \stackrel{\left(\varphi,\tilde \tau\right)}{\longlonglongrightarrow} f' \stackrel{\left(\varphi',\tilde \tau'\right)}{\longlonglongrightarrow} f''.$$ Then since we have
$$\xymatrix@R=0.1in@C=1.2cm{G \times \RR^n \ar[r]^-{\tilde \tau} & G \times \RR^n \ar[r]^-{\tilde \tau'} & G \times \RR^n\\
\left(g,x\right) \ar@{|-{>}}[r] & \left(g\tau\left(x\right),\varphi\left(x\right)\right) \ar@{|-{>}}[r] & \left(g\tau\left(x\right)\tau'\left(\varphi'\left(x\right)\right),\varphi'\varphi\left(x\right)\right),}$$
it follows that $$\left(\varphi',\tilde \tau'\right) \circ \left(\varphi,\tilde \tau\right)=\left(\varphi'\varphi,\widetilde{c\left(\tau,\tau',\varphi\right)}\right),$$ where $$c\left(\tau,\tau',\varphi\right)\left(x\right)=\tau\left(x\right)\tau'\left(\varphi\left(x\right)\right).$$ From Definition \ref{dfn:Borel}, it follows that $\sD$ is canonically isomorphic to the Borel category $\Bor$. Since the canonical inclusion $$\sD \hookrightarrow \int\limits_{\Emb}\!\!\!\!\!\!\!\!\alpha^*\Z$$ is an equivalence of categories, it follows that there is an induced homotopy equivalence $$B\left(\Bor\right) \to B\left(\int\limits_{\Emb}\!\!\!\!\!\!\!\!\alpha^*\Z\right),$$ so finally
$$h\left(B\left(\Bor\right)\right) \stackrel{\sim}{\longlongrightarrow} h\left(B\left(\int\limits_{\Emb}\!\!\!\!\!\!\!\!\alpha^*\Z\right)\right) \simeq \Pi_\i\left(M//G\right).$$
\end{proof}

\begin{rmk}
If a particular model for the Borel construction $M \times_{G} EG$ has the homotopy type of a $CW$-complex, which is the case e.g. if $M$ is $2^{nd}$ countable and Hausdorff and one uses a model for $EG$ as a $G$-CW-complex, or if one uses $||N\left(G \times M\right)||$ for $M \times_{G} EG,$ then it follows by Whitehead's theorem that there is in fact a homotopy equivalence $$B\left(\Bor\right) \to M \times_{G} EG.$$
\end{rmk}




\appendix

\section{A Brief Introduction to $\i$-Categories.}\label{sec:infinity}
In this appendix, we will try to provide enough information (some of which will heuristic) for the reader unfamiliar with higher category theory to read the main body of this article. Those readers who already have a background in higher category theory are encouraged to only glance through this appendix to become familiar with notational conventions.

\subsection{$\i$-Groupoids and the Homotopy Hypothesis}
A groupoid is a category in which every arrow is an isomorphism. As a special case, any group $G$ determines a groupoid with precisely one object $x$ such that $\Hom\left(x,x\right)=G.$ One categorical dimension higher, a $2$-groupoid is a weak $2$-category, or bicategory, in which every $2$-morphism is an isomorphism and every $1$-morphism is an \emph{equivalence}. Recall that an arrow $f:x \to y$ in a bicategory is an equivalence if there exists another arrow $g: y \to f$ and invertible $2$-morphisms $\alpha:f \circ g \Rightarrow id_y$ and $\beta:g \circ f \Rightarrow id_x.$ (Notice the similarity between the notion of an equivalence and the notion of a homotopy equivalence). 


For simplicity, let us assume that all topological spaces we speak of have the homotopy type of a $CW$-complex. Given a topological space $X,$ one has associated to it a canonical groupoid- its \emph{fundamental groupoid} $\Pi_1X$. The objects of $\Pi_1X$ consist of the points of $X$ and the arrows are given by homotopy classes of paths between points. If $X$ does not have any higher homotopy groups besides its fundamental group- making $X$ a so-called \emph{$1$-type}- then the groupoid $\Pi_1X$ contains all the homotopical information about $X.$ More precisely, the classifying space $B \Pi_1 X$ is naturally homotopy equivalent to $X$. More generally, if $\G$ is any groupoid, its classifying space $B\G$ has no higher homotopy groups besides its fundamental group. A special case of this is if $G$ is a group, then $BG$ is a $K\left(G,1\right)$, and we see that $G$ may be recovered as the fundamental group of $BG$. This generalizes to the fact that the fundamental groupoid of $B\G$ is  equivalent to $\G$. These facts can be used to establish an equivalence of bicategories between, on one hand the bicategory of (small) groupoids, functors, and natural transformations, and on the other hand, the bicategory of $1$-types, continuous maps, and homotopy classes of homotopies.  (Notice that two $1$-types are equivalent in this bicategory if they are homotopy equivalent.) Roughly speaking, one has that groupoids and $1$-types are the same thing.

A \emph{$2$-type} is a topological space $X$ such that $\pi_i\left(X\right)=0$ for all $i >2.$ A similar construction yields an equivalence between the tricategory of (weak) $2$-groupoids and the tricategory of $2$-types, continuous maps, homotopies, and homotopy classes of homotopies between homotopies. Roughly speaking, $2$-groupoids are the same as $2$-types.

A \emph{$n$-type} is a topological space $X$ such that $\pi_i\left(X\right)=0$ for all $i >n.$ The \textbf{Homotopy Hypothesis}, roughly speaking, says that for \emph{any} $n$, (weak) $n$-groupoids and $n$-types are the same thing. (Depending on the model of $n$-groupoids used, the Homotopy Hypothesis is either a theorem, or is true by definition.) The Homotopy Hypothesis holds in particular for $n=\infty$ so that an $\i$-groupoid is the same thing as an arbitrary homotopy type. The idea behind this is as follows: An $\infty$-groupoid should have objects, arrows, $2$-arrows, $3$-arrows, ad infinitum, and all the arrows, $2$-arrows, $3$-arrows etc. should be equivalences. This means for instance, that the arrows should be invertible up to $2$-morphisms which are invertible up to $3$-morphisms, which are invertible up to $4$-morphisms... Given a topological space $X$, one can construct such an $\infty$-groupoid- its fundamental $\infty$-groupoid $\Pi_\infty X$. The objects are the points of $x,$ the arrows are paths between points, the $2$-arrows are homotopies between paths, the $3$-arrows are homotopies between homotopies and so on. The Homotopy Hypothesis states that, up to equivalence, all $\infty$-groupoids arise in this way.

\subsection{$\infty$-Categories}
Roughly speaking, an $\infty$-category is a higher category $\C$ which has objects, arrows between these objects, $2$-arrows between these arrows, $3$-arrows between these $2$-arrows etc. such that all $k$-arrows for $k>1$ are equivalences.\footnote{More precisely, this is what is called an $\left(\infty,1\right)$-category. A general $\left(\infty,n\right)$-category satisfies the same condition for $k > n.$} Another way of saying this is that for any two objects $X$ and $Y$, $\Hom_\C\left(X,Y\right)$ should be an $\infty$-groupoid. 

Invoking the homotopy hypothesis, one way of describing $\infty$-categories is by categories enriched in spaces. (However there are many equivalent ways of modeling $\infty$-categories, see for instance \cite{Julie,BarwickKan} and Chapter 1.1 of \cite{htt}). In other words, one can model an $\icat$ simply by a category $\C$ such that for each pair of objects $X$ and $Y$ in $\C,$ the set $\Hom\left(X,Y\right)$ has the structure of a topological space, in such a way that all the structure maps of the category $\C$ (for instance composition) are continuous maps of these spaces of arrows. Notice that to such an enriched category $\C,$ by applying $\pi_0$ to each space of arrows, one gets an ordinary category $\mbox{ho}\left(\C\right)$. A continuous functor $F:\C  \to \sD$ between such topologically enriched categories is said to be an \emph{equivalence} if it induces an equivalence of categories $$\mbox{ho}\left(\C\right) \to \mbox{ho}\left(\sD\right)$$ and if the induced maps on spaces of arrows $$\Hom_\C\left(X,Y\right) \to \Hom_\sD\left(FX,FY\right)$$ is a weak homotopy equivalence for each $X$ and $Y$.

In particular, since associated to any two topological spaces $X$ and $Y$ one has the space of maps $\mathbf{Map}\left(X,Y\right),$\footnote{at least when restricting to compactly generated spaces.} the collection of all $\infty$-groupoids assembles into an $\icat$ $\iGpd$. This is often called the $\infty$-category of \emph{spaces} since the objects may equivalently be thought of as homotopy types. For any $n,$ there is also the subcategory $\nGpd$ of $n$-groupoids (or $n$-types). In particular $\iGpd$ contains the ordinary category of groupoids $\Gpd$ as a full subcategory.

To any Quillen model category $\mathcal{M},$ there is an associated $\icat.$ This is easiest to describe in the case of a simplicial model category, in which case, the $\icat$ associated to $\mathcal{M}$ is the topologically enriched category whose objects are the objects of $\mathcal{M}$ which are both fibrant and cofibrant, and whose spaces of arrows are obtained by geometrically realizing each simplicial mapping space $\Hom_\Delta\left(X,Y\right).$ When two model categories $\mathcal{M}$ and $\mathcal{N}$ are Quillen equivalent, then their associated $\i$-categories are also equivalent. In particular, the $\icat$ associated to the standard Quillen model structure on simplicial sets is another presentation for the $\icat$ $\iGpd$, thanks to the celebrated Quillen equivalence:

\begin{equation}\label{eq:Quillen}
\xymatrix@C=2.5cm{\Set^{\Delta^{op}} \ar@<+0.5ex>[r]^-{|\mspace{3mu}\cdot\mspace{3mu}|} & \mathbf{Top} \ar@<+0.5ex>[l]^-{\operatorname{Sing}}}
\end{equation}
between simplicial sets and topological spaces.

\begin{rmk}
Since simplicial sets are another model for $\i$-groupoids, another model for $\i$-categories is categories enriched in simplicial sets. Using this model, the $\icat$ associated to a simplicial model category $\mathcal{M}$ can be expressed more naturally: it is the category enriched in simplicial sets whose objects are the objects of $\mathcal{M}$ which are both fibrant and cofibrant, and whose spaces of arrows are given by the simplicial mapping spaces $\Hom_\Delta\left(X,Y\right).$
\end{rmk}

There is another powerful framework for modeling  $\i$-categories, and this is the framework of quasicategories (aka inner Kan complexes). In this model, an $\icat$ is a simplicial set $\C$ satisfying certain properties. One of the main advantages of this framework is that a large portion of \cite{htt} is dedicated to working out the higher categorical analogues of much of classical category theory in this setting. For instance, there is a good notion of limit and colimit in this context. If a diagram in the $\icat$ of $\i$-groupoids can be realized as a diagram in topological spaces, then its colimit is represented by the homotopy colimit of the corresponding diagram of spaces, and similarly if it can be realized as a diagram of simplicial sets instead by (\ref{eq:Quillen}). This is more generally true for diagrams in $\i$-categories associated to model categories.

\subsection{Higher Presheaves}
If $\C$ is an $\icat$ one can consider the $\icat$ of contravariant functors from $\C$ to the $\icat$ of $\i$-groupoids $\iGpd$, $$\Fun\left(\C^{op},\iGpd\right)=:\Pshi\left(\C\right),$$ called the $\icat$ of $\i$-presheaves on $\C$. This plays the same role as the category of presheaves does for an ordinary category. In particular, there is a Yoneda embedding: Given an object $X$ of $\C,$ $$y\left(X\right)=\Hom_\C\left(\bl,X\right)$$ defines an $\i$-presheaf on $\C.$ The assignment $$X \mapsto y\left(X\right)$$ extends to a fully faithful functor $$y:\C \hookrightarrow \Pshi\left(\C\right).$$

\begin{lem}\textbf{The $\i$-Yoneda Lemma} (Lemma 5.5.2.1 of \cite{htt})
If $\C$ is an $\icat$ with $X$ an object of $\C,$ and $$F:\C^{op} \to \iGpd$$ is an $\i$-presheaf on $\C,$ then there is a canonical equivalence of $\i$-groupoids $$\Hom\left(y\left(X\right),F\right) \simeq F\left(X\right).$$
\end{lem}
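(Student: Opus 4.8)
The statement is the $\i$-categorical Yoneda lemma, and although it is quoted from \cite{htt}, the cleanest self-contained route is to present $\Pshi\left(\C\right)$ by a model category of simplicial presheaves and reduce to a Quillen-adjunction computation of the relevant mapping space. The plan is as follows. First I would choose a fibrant simplicial category $\C_0$ modeling the $\icat$ $\C$, and present $\Pshi\left(\C\right)$ by the projective model structure on the category $\Fun\left(\C_0^{op},\Sset\right)$ of simplicially enriched presheaves valued in simplicial sets; the fibrant objects are the objectwise-Kan presheaves, so the chosen model of $F$ is fibrant, and the $\i$-groupoid $\Hom\left(y\left(X\right),F\right)$ is by definition the derived mapping space $\mathbf{Map}^h\left(y\left(X\right),F\right)$. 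The assertion then becomes the claim that this derived mapping space is naturally equivalent to $F\left(X\right)$.

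The key step is to observe that evaluation $\mathrm{ev}_X:\Fun\left(\C_0^{op},\Sset\right) \to \Sset$, $F \mapsto F\left(X\right)$, preserves fibrations and trivial fibrations (both being detected objectwise), and is therefore right Quillen. Its left adjoint $L_X$ is computed by the enriched co-Yoneda formula $L_X\left(K\right) \cong K \otimes \C_0\left(\bl,X\right)$, so that in particular $L_X\left(\Delta^0\right) \cong \C_0\left(\bl,X\right) = y\left(X\right)$. Since $L_X$ is left Quillen and $\Delta^0$ is cofibrant, $y\left(X\right)$ is cofibrant, and the derived adjunction gives
$$\mathbf{Map}^h\left(y\left(X\right),F\right) \simeq \mathbf{Map}^h\left(L_X\Delta^0,F\right) \simeq \mathbf{Map}^h\left(\Delta^0,\mathrm{ev}_X F\right) \simeq F\left(X\right),$$
the last equivalence using that $\mathrm{ev}_X F = F\left(X\right)$ is already a Kan complex when $F$ is fibrant. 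Unwinding the adjunction identifies this equivalence with the evaluation map sending a transformation $\eta:y\left(X\right) \to F$ to $\eta_X\left(\mathrm{id}_X\right) \in F\left(X\right)$, which exhibits the canonical and natural (in both $X$ and $F$) character of the equivalence.

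The main obstacle is not the formal adjunction but the bookkeeping that makes it legitimate in the general $\icat$ setting: one must know that the projective model structure on $\Fun\left(\C_0^{op},\Sset\right)$ genuinely presents $\Pshi\left(\C\right)$ and that $y$ agrees with the Yoneda embedding under this presentation, and one must verify the co-Yoneda identification $L_X\left(\Delta^0\right) \cong y\left(X\right)$ at the enriched level. If one prefers to avoid model categories entirely, the alternative is the direct quasicategorical argument of \cite{htt}, manipulating the mapping-space simplicial sets and the left fibration classified by $y\left(X\right)$; I find the Quillen-adjunction packaging above more transparent, but it trades explicit simplicial combinatorics for the input that simplicial presheaves model $\i$-presheaves.
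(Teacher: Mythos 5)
The paper does not actually prove this lemma: it is quoted directly from Lurie (Lemma 5.5.2.1 of \cite{htt}), so there is no internal argument to compare yours against. Judged on its own merits, your proof is correct, and it is in substance the proof of the cited result: Lurie's argument likewise replaces $\C$ by a fibrant simplicial category, presents $\Pshi\left(\C\right)$ by the projective model structure on simplicial presheaves, and reduces the statement to the \emph{strict} enriched Yoneda lemma, using that representables are projectively cofibrant and that projectively fibrant presheaves have Kan values. Your packaging via the Quillen adjunction $L_X \dashv \mathrm{ev}_X$ is an efficient way to obtain both inputs at once: cofibrancy of $y\left(X\right) \cong L_X\left(\Delta^0\right)$ and the identification of the derived mapping space with $F\left(X\right)$ then follow formally, and the unwinding to evaluation at $\mathrm{id}_X$ is the right description of the equivalence.

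Two bookkeeping points are worth making explicit. First, $y\left(X\right) = \C_0\left(\bl,X\right)$ is also projectively \emph{fibrant}: since $\C_0$ is a fibrant simplicial category, all its mapping spaces are Kan complexes. This is needed for $y\left(X\right)$ to be an object of the simplicial nerve presenting $\Pshi\left(\C\right)$, and (together with cofibrancy of $y\left(X\right)$ and fibrancy of $F$) for the strict simplicial mapping space $\mathbf{Map}\left(y\left(X\right),F\right)$ to compute $\Hom\left(y\left(X\right),F\right)$. Second, the two inputs you flag at the end --- that the projective model structure genuinely presents $\Pshi\left(\C\right)$, and that the strict enriched Yoneda embedding models the $\i$-categorical one --- are exactly Proposition 5.1.1.1 and the constructions of Section 5.1.3 of \cite{htt}; they are established there prior to, and independently of, the Yoneda lemma itself, so there is no circularity in invoking them, and in particular no circularity in your use of the strict enriched Yoneda lemma (a purely $1$-categorical fact) inside the adjunction computation. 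With those references supplied, your argument is complete.
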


Since the Yoneda embedding is fully faithful, we will often abuse notation and use the same notation $C$ for both the object $C$ in $\C$ and the object $y\left(C\right)$ in $\Pshi\left(\C\right).$

An immediate consequence of the Yoneda lemma is the following basic fact:

Let $F$ be an $\i$-presheaf on $\C,$ and let $\C/F$ denote the full subcategory of the slice $\icat,$ $\Pshi\left(\C\right)/F$ consisting of the objects of the form $y\left(X\right) \to F$, with $X$ an object of $\C.$ There is a canonical projection functor $$\pi_F:\C/F \to \C.$$

\begin{prop}
$F$ is the colimit of the composite $$\C/F \stackrel{\pi_F}{\longlongrightarrow} \C \stackrel{y}{\hookrightarrow} \Pshi\left(\C\right).$$ We often express this by the following informal notation:
\begin{equation}\label{eq:univcolim}
F \simeq \underset{y\left(X\right) \to F}\colim \mspace{2mu} y\left(X\right).
\end{equation}
\end{prop}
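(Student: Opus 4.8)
The plan is to produce the canonical comparison map and then verify it is an equivalence by probing with mapping spaces. Every object $\left(y\left(X\right)\to F\right)$ of $\C/F$ comes equipped with its own structure map, and these maps assemble into a cocone on the diagram $y\circ\pi_F$ with vertex $F$; by the universal property of the colimit this cocone is classified by a single morphism
$$\Phi:\underset{y\left(X\right)\to F}\colim\, y\left(X\right) \longrightarrow F.$$
It then suffices to show $\Phi$ is an equivalence in $\Pshi\left(\C\right)$. Because the Yoneda lemma above implies that a morphism is an equivalence exactly when it induces an equivalence on $\Hom\left(\bl,G\right)$ for every $\i$-presheaf $G$, it is enough to prove that for each $G$ the precomposition map
$$\Phi^*:\Hom\left(F,G\right) \longrightarrow \Hom\left(\underset{y\left(X\right)\to F}\colim\, y\left(X\right),\,G\right)$$
is an equivalence of $\i$-groupoids.

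First I would rewrite the right-hand side. Since mapping out of a colimit turns it into a limit of mapping spaces, and by the Yoneda lemma, we obtain
$$\Hom\left(\underset{y\left(X\right)\to F}\colim\, y\left(X\right),\,G\right)\simeq \lim_{\left(X,\eta\right)\in\C/F}\Hom\left(y\left(X\right),G\right)\simeq \lim_{\left(X,\eta\right)\in\C/F} G\left(X\right).$$
The heart of the argument is to identify this last limit with $\Hom\left(F,G\right)$. Here I would use Yoneda a second time to recognise the slice $\C/F$ as the $\i$-categorical category of elements of $F$: an object $y\left(X\right)\to F$ is the same datum as a pair $\left(X,s\right)$ with $s\in F\left(X\right)$, and $\pi_F$ remembers $X$. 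Under this identification a coherent cone over the diagram $\left(X,s\right)\mapsto G\left(X\right)$ is precisely a rule assigning to each $s\in F\left(X\right)$ an element of $G\left(X\right)$, coherently compatible with restriction along all maps of $\C$ — that is, exactly the data of a morphism of $\i$-presheaves $F\to G$. This yields $\lim_{\left(X,\eta\right)\in\C/F} G\left(X\right)\simeq\Hom\left(F,G\right)$, and tracing through the construction of the cocone defining $\Phi$ shows that the displayed equivalences compose to a homotopy inverse of $\Phi^*$, so $\Phi^*$ is an equivalence for all $G$ and hence $\Phi$ is an equivalence.

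The main obstacle I anticipate is making this final identification rigorous in the homotopy-coherent setting rather than the naive $1$-categorical one. The informal phrase ``coherently compatible family'' must be replaced by the genuine description of the mapping space in the functor $\i$-category $\Pshi\left(\C\right)=\Fun\left(\C^{op},\iGpd\right)$, namely as the end $\int_{C}\Hom_{\iGpd}\left(F\left(C\right),G\left(C\right)\right)$, together with a verification that this end computes the limit over $\C/F$ above. Concretely this rests on the fact, established through the straightening/unstraightening correspondence, that $\pi_F:\C/F\to\C$ is the right fibration classified by $F$, so that limits and colimits of diagrams pulled back along $\pi_F$ are controlled by $F$ via the co-Yoneda lemma. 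As a cross-check one can instead run the dual, object-wise argument: colimits in $\Pshi\left(\C\right)$ are computed pointwise and $\Hom\left(y\left(C\right),\bl\right)\simeq\operatorname{ev}_C$ preserves them, so $\left(\colim\, y\right)\left(C\right)\simeq\underset{\left(X,s\right)\in\C/F}\colim\,\Hom_\C\left(C,X\right)$, and this coend evaluates to $F\left(C\right)$ once more by co-Yoneda, with the distinguished element $\left(C,\,t,\,\mathrm{id}_C\right)$ witnessing the contractibility of the relevant fibres.
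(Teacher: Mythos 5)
You are correct, but note first that the paper itself offers no written argument: the proposition is stated as ``an immediate consequence of the Yoneda lemma'' (it is the standard density theorem, Corollary 5.1.5.8 of \cite{htt}), so you are being compared against an implicit citation rather than a proof. Within your proposal, the mapping-space route is sound but carries a near-circularity you should make explicit: since $\Hom\left(\bl,G\right)$ turns colimits into limits, the identification $\Hom\left(F,G\right)\simeq \lim_{\left(\C/F\right)^{op}} G\circ \pi_F$ is \emph{equivalent} to the proposition being proved, so it must be established independently; your appeal to straightening/unstraightening does accomplish this (morphisms of presheaves correspond to morphisms of right fibrations over $\C$, these are sections of the pulled-back fibration, and section spaces of right fibrations compute precisely such limits), but that is where the entire weight of the proof sits, and it is a far deeper input than ``Yoneda.'' Two smaller points: your limits should be indexed by $\left(\C/F\right)^{op}$, not $\C/F$, and the ``coherently compatible family'' sentence in your second paragraph is a restatement of what must be proved, not an argument. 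Your closing ``cross-check'' is actually the cleaner, self-contained proof, and closer in spirit to the paper's claim of immediacy: evaluation $\operatorname{ev}_C\simeq\Hom\left(y\left(C\right),\bl\right)$ preserves colimits, so it suffices to show $\underset{\left(X,s\right)\in\C/F}\colim \Hom_{\C}\left(C,X\right)\simeq F\left(C\right)$; this colimit is the weak homotopy type of the total space $\left(\C/F\right)\times_{\C}\left(C/\C\right)$ of the left fibration classified by $\Hom_{\C}\left(C,\pi_F\left(\bl\right)\right)$ (Corollary 3.3.4.6 of \cite{htt}, recorded in the paper as Theorem \ref{thm:thom3}), and viewing that same total space as the right fibration over $C/\C$ obtained by pulling back $\pi_F$, it is classified by $F\circ\operatorname{pr}^{op}$, so its weak homotopy type is the colimit of that functor over $\left(C/\C\right)^{op}$, which is $F\left(C\right)$ because $\operatorname{id}_C$ is initial in $C/\C$, hence terminal in the opposite category. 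I would promote this pointwise argument to the main proof and derive the mapping-space identification from it afterwards, rather than the other way around.
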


If $\C$ is a small $\icat,$ the $\icat$ $\Pshi\left(\C\right)$ of $\i$-presheaves satisfies an analogous universal property to the category of presheaves on a small category:

Let $\sD$ be a cocomplete $\icat$ (meaning it has all small colimits). Suppose that $$\Theta:\Pshi\left(\C\right) \to \sD$$ is a colimit preserving functor. Then by equation (\ref{eq:univcolim}), it follows that 
\begin{equation}\label{eq:lan}
\Theta\left(F\right) \simeq \underset{y\left(X\right) \to F}\colim \mspace{2mu} \Theta\left(y\left(X\right)\right).
\end{equation}
Hence, $\Theta$ is determined by the composite $\theta:=\Theta \circ y:\C \to \sD.$ Conversely, given an arbitrary functor $$\theta:\C \to \sD$$ one can \emph{extend} $\theta$ to a colimit preserving functor $$\Theta:\Pshi\left(\C\right) \to \sD,$$ which is unique with the property that $\Theta \circ y \simeq \theta,$ and this can be done by the formula
$$\Theta\left(F\right) = \underset{y\left(X\right) \to F}\colim \mspace{2mu} \theta\left(X\right).$$ The functor $\Theta$ is called the \textbf{left Kan extension} of $\theta$ along $y$ is denoted $$\Theta=\Lan_y \theta.$$ In fact, we have the following theorem:

\begin{thm}(Theorem 5.1.5.6 of \cite{htt})\label{thm:5.1.5.6}
Let $\C$ be a small $\icat,$ then composition with the Yoneda embedding $$y:\C \hookrightarrow \Pshi\left(\C\right)$$ induces an equivalence of $\i$-categories
$$\Fun^L\left(\Pshi\left(\C\right),\sD\right) \to \Fun\left(\C,\sD\right),$$
between the $\icat$ of colimit preserving functors from $\Pshi\left(\C\right)$ to $\sD$ and the $\icat$ of arbitrary functors from $\C$ to $\sD,$ with inverse given by the functor assigning to each functor $\theta:\C \to \sD$ its left Kan extension $\Lan_y \theta$.
\end{thm}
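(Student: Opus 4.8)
The plan is to produce an explicit inverse to restriction along the Yoneda embedding and then check that both composites are equivalent to the identity; here $\sD$ is assumed cocomplete. Write $R\colon \Fun^L\left(\Pshi\left(\C\right),\sD\right) \to \Fun\left(\C,\sD\right)$ for the functor $\Theta \mapsto \Theta \circ y$ induced by composition with $y$, and $L\colon \Fun\left(\C,\sD\right) \to \Fun\left(\Pshi\left(\C\right),\sD\right)$ for $\theta \mapsto \Lan_y\theta$. Since $\C$ is small and $\sD$ is cocomplete, each left Kan extension exists and is computed pointwise by the colimit formula $\left(\Lan_y\theta\right)\left(F\right) \simeq \colim\left(\C/F \xrightarrow{\pi_F} \C \xrightarrow{\theta} \sD\right)$. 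The goal is to show that $R$ and $L$ are mutually inverse equivalences, so that in particular $R$ is an equivalence with inverse $L$.

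First I would verify $R \circ L \simeq \mathrm{id}$, i.e. $\left(\Lan_y\theta\right)\circ y \simeq \theta$. Evaluating the pointwise formula at a representable $y\left(X\right)$ reduces to understanding the index category $\C/y\left(X\right)$ of representables over $y\left(X\right)$. By the $\i$-Yoneda lemma, a map $y\left(Z\right) \to y\left(X\right)$ is the same datum as a map $Z \to X$ in $\C$, so $\C/y\left(X\right)$ is equivalent to the slice $\icat$ of objects over $X$, which has a terminal object $\mathrm{id}_X$. A colimit indexed by an $\i$-category with a terminal object is just the value of the diagram at that object, here $\theta\left(X\right)$; this yields the required natural equivalence, using only full faithfulness of $y$.

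Second I would verify $L \circ R \simeq \mathrm{id}$ on colimit-preserving functors. For $\Theta \in \Fun^L\left(\Pshi\left(\C\right),\sD\right)$, the density formula \eqref{eq:univcolim} writes every presheaf as the colimit of the representables mapping to it. Because $\Theta$ preserves colimits, applying it to that formula gives the identity \eqref{eq:lan}, whose right-hand side is precisely the pointwise formula for $\Lan_y\left(\Theta\circ y\right)$ evaluated at $F$. Hence $\Theta \simeq \Lan_y\left(\Theta\circ y\right) = L\left(R\left(\Theta\right)\right)$.

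The main obstacle is to show that $L$ is even well-defined as a functor into $\Fun^L$, i.e. that each $\Lan_y\theta$ preserves all small colimits rather than merely lying in $\Fun\left(\Pshi\left(\C\right),\sD\right)$. Concretely, for a colimit $F \simeq \underset{j}\colim\, F_j$ in $\Pshi\left(\C\right)$ one must show that the colimit over $\C/F$ computing $\left(\Lan_y\theta\right)\left(F\right)$ agrees with $\underset{j}\colim\, \left(\Lan_y\theta\right)\left(F_j\right)$. This is a Fubini-type interchange: one identifies the relevant index $\i$-categories via a cofinality argument and then invokes that colimits commute with colimits in $\sD$. In the $\i$-categorical setting this is where the genuine effort lies, both because the cofinalities must be established carefully and because the objectwise equivalences of the previous two paragraphs must be upgraded to coherent natural equivalences of functors. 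Once colimit-preservation is secured, the two natural equivalences exhibit $R$ as an equivalence with inverse $L=\Lan_y\left(\bl\right)$, as claimed.
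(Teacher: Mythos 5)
The paper itself contains no proof of this statement: it is quoted verbatim from Lurie (Theorem 5.1.5.6 of \cite{htt}), and the discussion preceding it in the appendix --- formulas (\ref{eq:univcolim}) and (\ref{eq:lan}) --- is exactly the informal motivation that your first two steps formalize. So your attempt must be measured against Lurie's proof. Your steps 1 and 2 are the standard objectwise computations and are sound as far as they go: full faithfulness of $y$ identifies $\C/y\left(X\right)$ with the slice over $X$, which has terminal object $\mathrm{id}_X$, so the pointwise Kan extension formula returns $\theta\left(X\right)$; and density (\ref{eq:univcolim}) together with colimit preservation gives $\Theta \simeq \Lan_y\left(\Theta \circ y\right)$. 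Your worry about upgrading objectwise equivalences to coherent natural equivalences also has a standard resolution: rather than hand-building an inverse, one uses the global adjunction between restriction along $y$ and $\Lan_y$ supplied by the Kan-extension machinery (Section 4.3.3 of \cite{htt}); since equivalences of functors of $\i$-categories are detected objectwise, your two computations identify the unit and counit of that adjunction as equivalences.

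The genuine gap is your step 3, and it is the crux of the theorem: nothing in your outline proves that $\Lan_y\theta$ preserves colimits, i.e.\ that $\Lan_y$ actually lands in $\Fun^L\left(\Pshi\left(\C\right),\sD\right)$ rather than merely in $\Fun\left(\Pshi\left(\C\right),\sD\right)$. The Fubini-plus-cofinality route you gesture at requires showing that a comparison functor from an appropriate lax colimit of the slices $\C/F_j$ to $\C/F$ is cofinal, and establishing that in the quasicategorical setting is essentially as hard as the theorem itself; deferring it leaves the proposal identifying $\Fun\left(\C,\sD\right)$ with the full subcategory of functors that are left Kan extensions of their restrictions, but not with $\Fun^L\left(\Pshi\left(\C\right),\sD\right)$ as the theorem asserts. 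There are two standard ways to close this. One is to exhibit a right adjoint to $\Lan_y\theta$ directly: by the Yoneda lemma the formula $D \mapsto \Hom_{\sD}\left(\theta\left(\bl\right),D\right)$ defines a functor $\sD \to \Pshi\left(\C\right)$, and verifying the adjunction makes colimit preservation automatic; this is precisely the argument pattern the paper runs in its proof of Proposition \ref{prop:pi}. The other is Lurie's own route, which is structured to avoid both your step 3 and the coherence issue simultaneously: Lemma 5.1.5.5 of \cite{htt} shows that a functor $\Pshi\left(\C\right) \to \sD$ preserves small colimits if and only if it is a left Kan extension of its restriction to $\C$, and the theorem then follows from Proposition 4.3.2.15 of \cite{htt}, which says that restriction from the full subcategory of left Kan extensions to $\Fun\left(\C,\sD\right)$ is a trivial fibration. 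Until one of these arguments is supplied, your proposal is an accurate skeleton of the classical $1$-categorical proof, but the $\i$-categorical content of the theorem remains unproven.
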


We use the following observation a few times in this paper:

\begin{rmk}
Given a colimit preserving functor $\Theta=\Lan_y \theta:\Pshi\left(\C\right) \to \sD,$ it automatically has a right adjoint $R$ (by Corollary 5.5.2.9 of \cite{htt}). By the Yoneda lemma, we can determine $R$ explicitly. For $D$ an object of $\sD$ and $C$ an object of $\C,$ we have
\begin{eqnarray*}
R\left(D\right)\left(C\right) &\simeq& \Hom_\C\left(y\left(C\right),R\left(D\right)\right)\\
&\simeq& \Hom_\sD\left(\Theta\left(y\left(C\right)\right),D\right)\\
&\simeq & \Hom_\sD\left(\theta\left(C\right),D\right).
\end{eqnarray*}
\end{rmk}

Finally, we have the following proposition:

\begin{prop}\label{prop:colimlan}
If $t:\C \to \iGpd$ is the constant functor with value the contractible $\i$-groupoid $*,$ then $$\Lan_y\left(t\right)=\colim\left(\mspace{3mu} \cdot \mspace{3mu}\right),$$ i.e. $$\Lan_y\left(t\right):\Pshi\left(\C\right) \to \iGpd$$ sends each presheaf $F:\C^{op} \to \iGpd$ to its colimit.
\end{prop}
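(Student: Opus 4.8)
The plan is to identify both $\Lan_y\left(t\right)$ and the colimit functor $\colim\left(\bl\right)$ as colimit-preserving functors $\Pshi\left(\C\right) \to \iGpd$ and then to invoke the uniqueness clause of Theorem \ref{thm:5.1.5.6}: since composition with $y$ is an equivalence $\Fun^L\left(\Pshi\left(\C\right),\iGpd\right) \to \Fun\left(\C,\iGpd\right)$, it suffices to check that each of the two functors is colimit-preserving and that their restrictions along $y$ both agree with $t$.

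First I would observe that $\colim\left(\bl\right):\Pshi\left(\C\right)=\Fun\left(\C^{op},\iGpd\right) \to \iGpd$ is the left adjoint of the constant-diagram functor $\Delta:\iGpd \to \Pshi\left(\C\right)$, $Y \mapsto \Delta_Y$; being a left adjoint it automatically preserves all small colimits. That $\Lan_y\left(t\right)$ is colimit-preserving is built into its construction via Theorem \ref{thm:5.1.5.6}, and by the defining property of the left Kan extension along $y$ we have $\Lan_y\left(t\right) \circ y \simeq t$.

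The key step is then to compute the restriction of $\colim\left(\bl\right)$ along $y$, i.e. to show that the colimit of a representable presheaf is contractible. For an object $X$ of $\C$ and an arbitrary $\i$-groupoid $Y$, the adjunction $\colim \dashv \Delta$ together with the $\i$-Yoneda lemma supplies natural equivalences
\begin{align*}
\Hom_{\iGpd}\left(\colim\left(y\left(X\right)\right),Y\right) &\simeq \Hom_{\Pshi\left(\C\right)}\left(y\left(X\right),\Delta_Y\right)\\
&\simeq \Delta_Y\left(X\right) \simeq Y \simeq \Hom_{\iGpd}\left(*,Y\right).
\end{align*}
Since this chain is natural in $Y$, the Yoneda lemma applied inside $\iGpd$ forces $\colim\left(y\left(X\right)\right) \simeq *$, so that $\colim\left(\bl\right) \circ y \simeq t$ as well.

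With both functors colimit-preserving and both restricting to $t$ along $y$, the equivalence of Theorem \ref{thm:5.1.5.6} yields $\Lan_y\left(t\right) \simeq \colim\left(\bl\right)$, as claimed. I expect the only genuine content to lie in the middle step — the contractibility of $\colim\left(y\left(X\right)\right)$ — and the adjunction-plus-Yoneda argument above is precisely what makes it painless; the alternative routes (computing this colimit directly from the fact that the slice $\C_{/X}$ has the terminal object $\mathrm{id}_X$, or applying Theorem \ref{thm:thom3}) are available but heavier and unnecessary here.
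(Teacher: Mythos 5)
Your proof is correct, but it runs the argument in the mirror-image direction from the paper's. The paper never computes $\colim\left(y\left(X\right)\right)$ and never invokes the uniqueness clause of Theorem \ref{thm:5.1.5.6}: instead it uses the formula from the remark preceding the proposition to compute the \emph{right} adjoint $R$ of $\Lan_y\left(t\right)$, finding $R\left(X\right)\left(C\right)\simeq \Hom\left(*,X\right)\simeq X$, so that $R\simeq\Delta$; the adjunction $\Lan_y\left(t\right)\dashv \Delta$ then says that maps out of $\Lan_y\left(t\right)\left(F\right)$ are precisely cocones $F\Rightarrow\Delta_X$, which is the defining universal property of $\colim F$ --- no pre-existing colimit functor is needed. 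You instead take $\colim\left(\bl\right)$ as given (the left adjoint of $\Delta$), show it agrees with $t$ on representables via the adjunction-plus-Yoneda computation $\Hom\left(\colim y\left(X\right),Y\right)\simeq \Delta_Y\left(X\right)\simeq Y\simeq\Hom\left(*,Y\right)$, and conclude by uniqueness of colimit-preserving extensions along $y$. The two arguments are dual passes through the same adjunction: the paper's is a bit shorter and self-contained given its preceding remark, while yours isolates a reusable fact (the colimit of a representable $\i$-presheaf is contractible) and makes the comparison happen exactly where Theorem \ref{thm:5.1.5.6} says it must --- on the image of the Yoneda embedding. Both are complete; your naturality-in-$Y$ step, which is what licenses the final application of Yoneda inside $\iGpd$, is correctly flagged and does hold since each equivalence in your chain is natural in $Y$.
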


\begin{proof}
Denote by $R$ the right adjoint to $\Lan_y\left(t\right).$ Then by the preceding remark, we have that for $C$ an object of $\C$ and $X$ an $\i$-groupoid that
$$R\left(X\right)\left(C\right)\simeq \Hom\left(*,X\right) \simeq X,$$ so we can identify $R\left(X\right)$ with the constant functor $$\Delta_X:\C^{op} \to \iGpd$$ with value $X.$ Since $\Lan_y\left(t\right) \dashv R,$ we have that for $F$ an $\i$-presheaf and $X$ an $\i$-groupoid,
$$\Hom\left(\Lan_y\left(t\right)\left(F\right),X\right) \simeq \Hom\left(F,\Delta_X\right),$$ i.e. maps out of $\Lan_y\left(t\right)\left(F\right)$ are the same as cocones for $F$ with vertex $X$ $$F \Rightarrow \Delta_X,$$ which by definition means that $\Lan_y\left(t\right)\left(F\right)= \colim F.$
\end{proof}

\subsection{Stacks}\label{sec:stacks}

\begin{dfn}\label{dfn:isheave}
Suppose that $\X:\Mfd^{op} \to \iGpd$ is an $\i$-presheaf. We say that $\X$ is an $\i$-stack (or $\i$-sheaf) if for any manifold $M$ and any open cover $\left(U_\alpha \hookrightarrow M\right),$ the canonical map

\begin{equation}\label{eq:descent} 
\mathscr{X}\left(M\right) \to \varprojlim \left[\prod\limits_\alpha \X\left(U_\alpha\right) \rrarrow \prod\limits_{\alpha,\beta} \X\left(U_\alpha \cap U_\beta\right) \rrrarrow \prod\limits_{\alpha,\beta,\gamma} \X\left(U_\alpha \cap U_\beta\cap U_\gamma\right)\ldots\right]
\end{equation}
is an equivalence of $\i$-groupoids, where the above diagram is the entire cosimplicial diagram, with terms involving $n$-fold intersections of elements of the cover for all $n$. Denote the full subcategory of $\Pshi\left(\Mfd\right)$ on the $\i$-stacks by $\Shi\left(\Mfd\right).$ The canonical inclusion $$\Shi\left(\Mfd\right) \hookrightarrow \Pshi\left(\Mfd\right)$$ admits a left adjoint $a$ called the \textbf{$\i$-stackification} functor, and $a$ preserve finite limits.
\end{dfn}

\begin{prop}\label{prop:ithesame}
One can define $\i$-stacks over $\widetilde{\Mfd}$ in a completely analogous way, and there is a canonical equivalence of $\i$-categories $\Shi\left(\widetilde{\Mfd}\right) \simeq \Shi\left(\Mfd\right).$
\end{prop}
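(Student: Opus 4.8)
The plan is to realize $\Mfd$ as a \emph{dense subsite} of $\widetilde{\Mfd}$ and to deduce the equivalence from the $\infty$-categorical comparison lemma. Equip both categories with the open-cover topology (a sieve on $M$ is covering precisely when it contains a jointly surjective family of open embeddings into $M$), and write $i:\Mfd \hookrightarrow \widetilde{\Mfd}$ for the inclusion, which is fully faithful. Restriction along $i$ gives $i^*:\Pshi(\widetilde{\Mfd}) \to \Pshi(\Mfd)$, $F \mapsto F \circ i^{op}$. First I would check that $i^*$ carries $\i$-sheaves to $\i$-sheaves: for $M$ in $\Mfd$ and an open cover $(U_\alpha \hookrightarrow M)$ by objects of $\Mfd$, all the finite intersections $U_{\alpha_0} \cap \cdots \cap U_{\alpha_k}$ are open submanifolds of $M$ and hence again lie in $\Mfd$, so the descent diagram (\ref{eq:descent}) for $i^*F$ at $M$ is literally an instance of the descent diagram for $F$ in $\widetilde{\Mfd}$. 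Thus $i^*$ restricts to a functor $i^*:\Shi(\widetilde{\Mfd}) \to \Shi(\Mfd)$, which will be the claimed equivalence.

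Next I would verify the two hypotheses of the comparison lemma. For \emph{density}, every object $N$ of $\widetilde{\Mfd}$ admits an open cover by coordinate charts, each diffeomorphic to an open subset of $\RR^n$; such charts are automatically Hausdorff and second countable, hence belong to $\Mfd$. For the \emph{matching of topologies}, the same observation that open submanifolds of an object of $\Mfd$ remain in $\Mfd$ shows that a family in $\Mfd$ is covering in $\Mfd$ if and only if its image is covering in $\widetilde{\Mfd}$, i.e. the topology on $\Mfd$ is exactly the one induced from $\widetilde{\Mfd}$. These are precisely the inputs required by the $\infty$-analogue of the classical comparison lemma (SGA 4 III.4.1), which may be assembled from the theory of Grothendieck topologies on $\i$-categories together with the left Kan extension formalism recalled in Theorem \ref{thm:5.1.5.6}.

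Granting the comparison lemma, the inverse to $i^*$ is the right Kan extension $i_*$, restricted to sheaves: for $G$ in $\Shi(\Mfd)$ and $N$ in $\widetilde{\Mfd}$ with a chart cover $(U_\alpha)$ by objects of $\Mfd$, one sets $(i_*G)(N)$ to be the limit of the associated \v{C}ech descent diagram built from the values of $G$ on the $U_\alpha$ and their intersections; density makes this independent of the chosen cover, the sheaf condition on $G$ makes $i_*G$ an $\i$-sheaf, and full faithfulness of $i$ together with density forces the unit and counit to be equivalences. Finally, since $\Mfd$ is essentially small by Whitney's embedding theorem, $\Shi(\Mfd)$ is presentable, and the equivalence shows that $\Shi(\widetilde{\Mfd})$ --- a priori formed over the large site $\widetilde{\Mfd}$ in a larger universe --- is canonically equivalent to it. The main obstacle is exactly this entanglement of two issues: establishing the core descent computation of the comparison lemma \emph{without} any $T1$ or Hausdorff hypotheses on the objects of $\widetilde{\Mfd}$, and simultaneously keeping the set-theoretic size under control; the latter is resolved precisely because the dense subsite $\Mfd$ is small.
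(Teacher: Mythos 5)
Your approach is genuinely different from the paper's, and it has a gap at its central step. The paper does \emph{not} invoke an $\i$-categorical comparison lemma: it applies the classical comparison lemma of SGA~4 only at the level of sheaves of \emph{sets}, obtaining $\Sh\left(\Mfd\right) \simeq \Sh\left(\widetilde{\Mfd}\right)$, then transfers this to an equivalence of categories of simplicial sheaves, identifies the associated $\i$-categories with the \emph{hypercompletions} of $\Shi\left(\Mfd\right)$ and $\Shi\left(\widetilde{\Mfd}\right)$ (Theorem 5 of Jardine together with Proposition 6.5.2.14 of \cite{htt}), and finally uses that both $\i$-topoi are hypercomplete because manifolds are locally of finite covering dimension. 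This detour is not cosmetic. The $\i$-analogue of the comparison lemma that you invoke is not something that ``may be assembled from the theory of Grothendieck topologies on $\i$-categories together with the left Kan extension formalism'' of Theorem \ref{thm:5.1.5.6}: for $\i$-sheaves (as opposed to $1$-sheaves, or hypersheaves) a dense full subsite with the induced topology does \emph{not} in general induce an equivalence of sheaf $\i$-categories. The obstruction is exactly \v{C}ech descent versus hyperdescent: reconstructing $F\left(N\right)$ for $N$ in $\widetilde{\Mfd}$ from the values of $F$ on objects of $\Mfd$ means covering $N$, then covering the pieces of the resulting \v{C}ech nerve, and so on, which produces a hypercover, and a non-hypercomplete $\i$-sheaf need not descend along hypercovers. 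Correspondingly, your assertions that ``the sheaf condition on $G$ makes $i_*G$ an $\i$-sheaf'' and that ``full faithfulness of $i$ together with density forces the unit and counit to be equivalences'' are precisely the statements that require (and in this generality lack) proof; they are not formal consequences of density, and they are exactly what fails in the known examples where the $\i$-comparison lemma breaks down. Your closing paragraph names the ``core descent computation'' as the main obstacle but then resolves only the set-theoretic size issue, which is the easy half.

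What saves the statement --- and what your own observations nearly supply --- is that in this example the \v{C}ech nerve of a chart cover of any $N$ in $\widetilde{\Mfd}$ lies entirely inside $\Mfd$: coordinate charts and all their finite intersections are Hausdorff and second countable. With this closure property a non-hypercomplete comparison statement can indeed be proved, but that is a genuine argument, not a citation; the Hausdorffness of the objects of $\widetilde{\Mfd}$ is a red herring (it plays a role in the fat-realization lemmas elsewhere in the paper, not here). The alternative, which is the paper's route, is to sidestep the issue entirely: hypercompleteness of both $\i$-topoi, obtained from local finite covering dimension via Section 7.2 of \cite{htt}, reduces the claim to an equivalence of hypercompletions, and that equivalence \emph{does} follow from the classical, citable comparison lemma. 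To repair your outline you must either prove the $\i$-categorical comparison lemma you use (exploiting the closure of $\Mfd$ under intersections within chart covers), or insert the hypercompleteness argument; as written, the proof is incomplete.
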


\begin{proof}
By the Comparison Lemma of \cite{SGA} III, one has a canonical equivalence between categories of sheaves of sets $$\Sh\left(\Mfd\right) \simeq \Sh\left(\widetilde{\Mfd}\right).$$ Since every manifold is locally of finite covering dimension, it follows readily that both of the $\i$-topoi in question are hypercomplete. The result now follows by noticing that the Comparison Lemma induces an equivalence $$\Sh\left(\Mfd\right)^{\Delta^{op}} \simeq \Sh\left(\widetilde{\Mfd}\right)^{\Delta^{op}}$$ between simplicial sheaves, and invoking Theorem 5 of \cite{Jardine} and Proposition 6.5.2.14 of \cite{htt}.
\end{proof}

\begin{rmk}
Any $\i$-presheaf can be modeled by an ordinary functor $$X:\Mfd^{op} \to \Set^{\Delta^{op}}$$ from (the opposite of) the category of manifolds to the category of simplicial sets, such that each simplicial set $X\left(M\right)$ is a Kan complex. Such an $X$ models an $\i$-sheaf if (\ref{eq:descent}) is a weak equivalence of simplicial sets, where the limit is taken to be a homotopy limit.
\end{rmk}

\begin{dfn}
Denote by $\Psh_1\left(\Mfd\right)$ the bicategory of (possibly weak) $2$-functors $$\X:\Mfd^{op} \to \Gpd$$ from (the opposite of) the category $\Mfd$ into the bicategory of (essentially small) groupoids. This is a full subcategory of the $\icat$ $\Pshi\left(\Mfd\right).$ Such a functor $\X$ is a \textbf{stack} if the composite $$\Mfd^{op} \stackrel{\X}{\longlongrightarrow} \Gpd \hookrightarrow \iGpd$$ is an $\i$-stack. This is equivalent to demanding that for any manifold $M$ and any open cover $\left(U_\alpha \hookrightarrow M\right),$ the canonical map
\begin{equation}
\mathscr{X}\left(M\right) \to \varprojlim \left[\prod\limits_\alpha \X\left(U_\alpha\right) \rrarrow \prod\limits_{\alpha,\beta} \X\left(U_\alpha \cap U_\beta\right) \rrrarrow \prod\limits_{\alpha,\beta,\gamma} \X\left(U_\alpha \cap U_\beta\cap U_\gamma\right)\right]
\end{equation}
is an equivalence of groupoids, where the limit is a bicategorical limit, or homotopy limit, in the bicategory $\Gpd$ of groupoids. Notice that unlike (\ref{eq:descent}), the above diagram stops at $3$-fold intersections. One could use the entire untruncated cosimplicial diagram used in (\ref{eq:descent}), however, in the case of groupoids, this would result in the same limit. Denote the full subcategory of $\Psh_1\left(\Mfd\right)$ on the stacks by $\St\left(\Mfd\right)$. The canonical inclusion $$\St\left(\Mfd\right) \hookrightarrow \Psh_1\left(\Mfd\right)$$ admits a left adjoint $a$ called the \textbf{stackification} functor, and $a$ preserve finite limits.
\end{dfn}

\begin{rmk}\label{rmk:compy}
The canonical equivalence $\Shi\left(\widetilde{\Mfd}\right) \simeq \Shi\left(\Mfd\right)$ restricts to an equivalence $\St\left(\widetilde{\Mfd}\right) \simeq \St\left(\Mfd\right)$ as any equivalence of $\i$-categories induces an equivalence on the associated subcategories of $1$-truncated objects.
\end{rmk}

\begin{rmk}
There is no real danger in denoting both the stackification and $\i$-stackification functor by $a$ as by Proposition 5.5.6.16 of \cite{htt}, the following diagram commutes:

$$\xymatrix{\Psh_1\left(\Mfd\right) \ar[r]^-{a} \ar@{^{(}->}[d] & \St\left(\Mfd\right) \ar@{^{(}->}[d]\\
\Pshi\left(\Mfd\right) \ar[r]^{a} & \Shi\left(\Mfd\right).}$$
\end{rmk}

Let $\G$ be a Lie groupoid. Then $\G$ determines a weak presheaf of groupoids on $\Mfd$ by the rule

\begin{equation*}
M \mapsto \Hom_{\LieGpd}\left(M^{id},\G\right),
\end{equation*}
This defines a $2$-functor $\tilde y: \LieGpd \to \Psh_1\left(\Mfd\right)$ and we have the obvious commutative diagram

$$\xymatrix{\Mfd  \ar[d]_{\left(\mspace{2mu} \cdot \mspace{2mu}\right)^{(id)}} \ar[r]^{y} & \Psh\left(\Mfd\right) \ar^{\left(\mspace{2mu} \cdot \mspace{2mu}\right)^{(id)}}[d]\\
\LieGpd \ar_{\tilde y}[r] & \Psh_1\left(\Mfd\right),}$$
where $y$ denotes the Yoneda embedding. 

\begin{rmk}
An $\i$-presheaf $\X:\Mfd^{op} \to \iGpd$ is an \textbf{$\i$-stack} (or $\i$-sheaf) if and only if for every manifold $M$ and every open cover $\mathcal{U}=\left(U_\alpha \hookrightarrow M\right),$ the canonical map
$$\Hom\left(y\left(M\right),\X\right) \to \Hom\left(\tilde y\left(M_\mathcal{U}\right),\X\right)$$ is an equivalence of $\i$-groupoids. Similarly for stacks of groupoids. That is to say, by the Yoneda lemma one has
$$\Hom\left(y\left(M\right),\X\right) \simeq \X\left(M\right)$$ and by Remark \ref{rmk:Gcolim}, there is a canonical equivalence of $\i$-groupoids
$$\Hom\left(\tilde y\left(M_\mathcal{U}\right),\X\right) \simeq \varprojlim \left[\prod\limits_\alpha \X\left(U_\alpha\right) \rrarrow \prod\limits_{\alpha,\beta} \X\left(U_\alpha \cap U_\beta\right) \rrrarrow \prod\limits_{\alpha,\beta,\gamma} \X\left(U_\alpha \cap U_\beta\cap U_\gamma\right)\ldots\right].$$
\end{rmk}

\section{Principal Bundles for Lie Groupoids}

\begin{dfn}\label{def:Gmfd}
Given a Lie groupoid $\G$, a (left) \textbf{$\G$-manifold} is a (possibly non-Hausdorff or $2^{nd}$-countable) manifold $E$ equipped with a \textbf{moment map} $\mu:E \to \G_0$ and an \textbf{action map} $\rho:\G_1 \times_{\G_0} E \to E$,
where
$$\xymatrix{\G_1 \times_{\G_0} E \ar[r]  \ar[d] & E \ar^-{\mu}[d] \\
\G_1 \ar^-{s}[r] & \G_0\\}$$
is the fibered product, such that the following conditions hold:

\begin{itemize}
\item[i)] $\left(gh\right) \cdot e = g \cdot \left(h \cdot e\right)$ whenever $e$ is an element of $E$ and $g$ and $h$ elements of $\G_1$ with domains such that the composition makes sense
\item[ii)] $1_{\mu\left(e\right)} \cdot e =e$ for all $e \in E$
\item[iii)] $\mu\left(g \cdot e\right) = t \left(g\right)$ for all $g \in \G_1$ and $e \in E$.
\end{itemize}
\end{dfn}

\begin{dfn}\label{dfn:Gbundle}
A (left) \textbf{$\G$-bundle} over a manifold $M$ is a $\G$-manifold $P$ equipped with a $\G$-invariant \textbf{projection map} $$\pi:P \to M$$ which is a surjective submersion. Such a $\G$-bundle is \textbf{principal} if the induced map, $$\G_1 \times_{\G_0} P \to P \times_{M} P$$ is a diffeomorphism. We usually use the notation
$$\xymatrix @R=2pc @C=0.15pc {\G_1  \ar@<+.7ex>[d] \ar@<-.7ex>[d] & \acts & P \ar_{\mu}[lld] \ar^{\pi}[d] \\
\G_0 && M}$$ to denote such a principal $\G$-bundle.
\end{dfn}

\begin{rmk}
When $\G=G$ is a Lie group, then the above definition agrees with the classical definition of a principal $G$-bundle over $M$.
\end{rmk}

\begin{dfn}\label{dfn:bibundle}
Let $\G$ and $\H$ be two Lie groupoids. A \textbf{principal $\G$-bundle over $\H$} is a principal $\G$-bundle

$$\xymatrix @R=2pc @C=0.15pc {\G_1  \ar@<+.7ex>[d] \ar@<-.7ex>[d] & \acts & P \ar_{\mu}[lld] \ar^{\nu}[d] \\
\G_0 && \H_0}$$
over $\H_0$, such that $P$ also has the structure of a right $\H$-bundle with moment map $\nu$, with the $\G$ and $\H$ actions commuting in the obvious sense. We typically denote such a bundle by

$$\xymatrix @R=2pc @C=0.15pc {\G_1  \ar@<+.7ex>[d] \ar@<-.7ex>[d] & \acts & P \ar_{\mu}[lld] \ar^{\nu}[rrd] & \acted & \H_1 \ar@<+.7ex>[d] \ar@<-.7ex>[d]\\
\G_0 & & & &\H_0.}$$
\end{dfn}

Given a morphism of stacks $\left[\H\right] \to \left[\G\right],$ there is a pullback diagram
\begin{equation}\label{eq:pbprin}
\xymatrix{P \ar[rr]^-{\mu} \ar[d]_-{\nu} & & \G_0 \ar[d]\\
\H_0 \ar[r] & \left[\H\right] \ar[r] & \left[\G\right]}
\end{equation}
with $P$ a manifold, and $P$ inherits the structure of a principal $\G$-bundle over $\H.$ Conversely, given a principal $\G$-bundle $P$ over $\H,$ it canonically gives rise to a morphism of stacks $$p:\left[\H\right] \to \left[\G\right]$$ such that $P$ can be obtained by the pullback diagram (\ref{eq:pbprin}). In fact, there is a way of ``composing'' a principal $\cK$ bundle over $\G$ with a principal $\H$-bundle over $\G$ to produce a principal $\cK$-bundle over $\H$. This gives rise to the structure of a bicategory whose objects are Lie groupoids, arrows are principal bundles, and $2$-arrows are morphisms of principal bundles. This bicategory is canonically equivalent to the bicategory of differentiable stacks. See \cite{Christian} (Theorem 2.18) and \cite{thesis} (Theorem I.2.4) for more details.

\begin{cor}\label{cor:buncon}
For $M$ a manifold, $\left[\G\right]\left(M\right)$ is canonically equivalent to the groupoid of principal $\G$-bundles over $M.$ Moreover, given a morphism $p:M \to \left[\G\right],$ the principal bundle $P$ corresponding to $p$ is obtained as the pullback diagram
$$\xymatrix{P \ar[r]^-{\mu} \ar[d]_-{\pi} & \G_0 \ar[d]\\
M \ar[r] & \left[\G\right].}$$ In particular, for $G$ a Lie group, $\left[G\rrarrow *\right]$ is the stack of principal $G$-bundles.
\end{cor}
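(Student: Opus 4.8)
The plan is to obtain the corollary by specializing the general correspondence stated immediately above---between morphisms of stacks $[\H] \to [\G]$ and principal $\G$-bundles over $\H$---to the single case $\H = M^{id}$, the manifold $M$ regarded as a Lie groupoid with only identity arrows (Example \ref{ex:id}). Thus the content of the proof is an unwinding of that correspondence rather than any new construction.

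First I would identify $[M^{id}]$ with the representable stack $y(M)$. Since $M^{id}$ has only identity arrows, for any manifold $N$ one has $\tilde y(M^{id})(N) = \Hom_{\LieGpd}\left(N^{id}, M^{id}\right) \cong \Hom_{\Mfd}(N,M) = y(M)(N)$, and the right-hand side is already a sheaf; hence $[M^{id}] = a\left(\tilde y(M^{id})\right) \simeq y(M)$, which under the Yoneda embedding we identify with $M$ itself. Consequently, by the Yoneda lemma, $\Hom(M, [\G]) \simeq [\G](M)$, so a morphism $p \colon M \to [\G]$ is precisely the datum of an object of the groupoid $[\G](M)$.

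Next I would check that a principal $\G$-bundle over $M^{id}$ in the sense of Definition \ref{dfn:bibundle} is exactly a principal $\G$-bundle over the manifold $M$ in the sense of Definition \ref{dfn:Gbundle}. Indeed, the arrow manifold of $M^{id}$ is $M$ itself, so the right $M^{id}$-action on $P$ with moment map $\nu = \pi \colon P \to M$ is forced to be the trivial action and carries no extra information; the bibundle structure of Definition \ref{dfn:bibundle} therefore collapses to the left principal $\G$-bundle structure over $M$, and the commuting-actions condition becomes vacuous. With these two identifications in hand, the general correspondence yields an equivalence between $[\G](M)$ and the groupoid of principal $\G$-bundles over $M$, and specializing the pullback square (\ref{eq:pbprin}) with $\H_0 = M$ and $\nu = \pi$ produces exactly the displayed pullback diagram realizing $P$ from $p$.

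Finally, for the last assertion I would take $\G = \left(G \rrarrow *\right)$, so that $\G_0 = *$; by the remark following Definition \ref{dfn:Gbundle}, a principal $\G$-bundle over $M$ is then precisely a classical principal $G$-bundle over $M$, whence $\left[G \rrarrow *\right](M)$ is the groupoid of principal $G$-bundles over $M$. The one point requiring genuine care---the main obstacle---is that the correspondence is an equivalence of \emph{groupoids}, i.e. that morphisms of principal $\G$-bundles over $M$ match up with the $2$-morphisms (natural isomorphisms) of $[\G](M)$; this compatibility is exactly what is supplied by the bicategorical equivalence of \cite{Christian} (Theorem 2.18) and \cite{thesis} (Theorem I.2.4) upon restriction along $M^{id}$, so beyond this bookkeeping no further argument is needed.
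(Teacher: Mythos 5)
Your proposal is correct and follows exactly the route the paper intends: the paper states Corollary \ref{cor:buncon} with no separate proof, as an immediate specialization of the preceding bicategorical correspondence between morphisms $\left[\H\right] \to \left[\G\right]$ and principal $\G$-bundles over $\H$ (citing \cite{Christian} and \cite{thesis}), which is precisely what you do by taking $\H = M^{id}$, identifying $\left[M^{id}\right] \simeq y\left(M\right)$ via Yoneda, and observing that the bibundle structure over $M^{id}$ collapses to an ordinary principal $\G$-bundle over $M$. Your write-up simply makes explicit the unwinding the paper leaves implicit.
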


\section{Proof of Theorem \ref{thm:embeql}} \label{sec:monoid}

\begin{lem}\label{lem:slicern}
The slice $\infty$-topos $$\Shi\left(\Emb\right)/ay\left(\RR^n\right)$$ is canonically equivalent to $\Shi\left(\RR^n\right)$ where $y$ denotes the Yoneda embedding.
\end{lem}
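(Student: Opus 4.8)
The plan is to identify the slice $\i$-topos with $\i$-sheaves on an explicit slice site, and then to recognize that site, up to a comparison-lemma equivalence, as a site for the space $\RR^n$.

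First I would record the relevant generality about slices. Write $*$ for the unique object of $\Emb$, so that $y\left(\RR^n\right)=y\left(*\right)$ is the representable presheaf. For any small $\i$-category $\C$ and any object $c$, the category of elements of a representable $y\left(c\right)$ is the over-category $\C_{/c}$, and one has a canonical equivalence $\Pshi\left(\C\right)/y\left(c\right)\simeq\Pshi\left(\C_{/c}\right)$. Moreover, since sheafification $a\colon\Pshi\left(\Emb\right)\to\Shi\left(\Emb\right)$ is a left exact localization, and left exact localizations are stable under slicing, slicing $a$ over the object $y\left(\RR^n\right)$ exhibits $\Shi\left(\Emb\right)/ay\left(\RR^n\right)$ as a left exact localization of $\Pshi\left(\Emb\right)/y\left(\RR^n\right)$, namely the localization at those morphisms over $y\left(\RR^n\right)$ that become equivalences after applying $a$. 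Unwinding which morphisms these are is a routine check: under $\Pshi\left(\Emb\right)/y\left(\RR^n\right)\simeq\Pshi\left(\Emb_{/*}\right)$ they are exactly the covering sieves for the topology $J_{/*}$ on $\Emb_{/*}$ induced from the jointly-surjective topology on $\Emb$. Hence $\Shi\left(\Emb\right)/ay\left(\RR^n\right)\simeq\Shi\left(\Emb_{/*},J_{/*}\right)$.

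Next I would identify the slice site geometrically. An object of $\Emb_{/*}$ is an embedding $\varphi\colon\RR^n\hookrightarrow\RR^n$, and a morphism $\varphi\to\psi$ is an embedding $\eta$ with $\psi\circ\eta=\varphi$. Because $\psi$ is injective, such an $\eta$ is unique when it exists, and it exists if and only if $\varphi\left(\RR^n\right)\subseteq\psi\left(\RR^n\right)$ (take $\eta=\psi^{-1}\circ\varphi$, using that $\psi$ is a diffeomorphism onto its image). Thus $\Emb_{/*}$ is a preorder, and the assignment $\varphi\mapsto\varphi\left(\RR^n\right)$ exhibits its poset reflection as the poset $\mathcal{O}^{\cong}$ of open subsets of $\RR^n$ that are diffeomorphic to $\RR^n$. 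Under this identification a covering family of $\psi$, i.e.\ a jointly surjective family $\left\{\eta_i\colon\varphi_i\to\psi\right\}$, corresponds exactly to an open cover of $\psi\left(\RR^n\right)$ by opens diffeomorphic to $\RR^n$. Therefore $\left(\Emb_{/*},J_{/*}\right)$ is equivalent, as a site, to $\mathcal{O}^{\cong}$ equipped with the open-cover topology.

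Finally, since open balls are diffeomorphic to $\RR^n$ and form a basis for the topology of $\RR^n$, the full subcategory $\mathcal{O}^{\cong}\hookrightarrow\mathcal{O}_{\RR^n}$ of all opens is a dense subsite. By the Comparison Lemma (as invoked for Proposition \ref{prop:ithesame}) the restriction functor induces an equivalence of categories of sheaves of sets; and since $\RR^n$ is locally of finite covering dimension, both $\i$-topoi are hypercomplete, so this equivalence lifts to $\i$-sheaves via Theorem 5 of \cite{Jardine} together with Proposition 6.5.2.14 of \cite{htt}, exactly as in the proof of Proposition \ref{prop:ithesame}. This gives $\Shi\left(\mathcal{O}^{\cong}\right)\simeq\Shi\left(\RR^n\right)$, and combining the three steps yields $\Shi\left(\Emb\right)/ay\left(\RR^n\right)\simeq\Shi\left(\RR^n\right)$. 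I expect the main obstacle to be the first step — correctly matching the localization produced by slicing sheafification with the induced topology $J_{/*}$ on the slice site; once that bookkeeping is settled, the identification of $\Emb_{/*}$ as a preorder of opens and the comparison step are both straightforward.
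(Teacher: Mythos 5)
Your proof follows the same skeleton as the paper's: pass from the slice $\i$-topos to sheaves on the slice site, identify that site with the poset $\mathcal{O}p\left(\R^n\right)'$ of opens of $\R^n$ diffeomorphic to $\R^n$, then compare with the full poset of opens. Your first two steps match the paper exactly (the paper handles your first step by citing Proposition 2.2.1 of \cite{higherme}, and makes precisely your second observation, that $\varphi \mapsto \varphi\left(\R^n\right)$ is fully faithful and surjective on objects onto $\mathcal{O}p\left(\R^n\right)'$). The genuine gap is in your third step. Your route --- the Comparison Lemma for sheaves of sets, lifted along Theorem 5 of \cite{Jardine} and Proposition 6.5.2.14 of \cite{htt} --- only produces an equivalence of \emph{hypercompletions}, so it proves the lemma only if both $\Shi\left(\mathcal{O}p\left(\R^n\right)'\right)$ and $\Shi\left(\R^n\right)$ are hypercomplete. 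For $\Shi\left(\R^n\right)$ this is fine (Theorem 7.2.3.6 and Corollary 7.2.1.12 of \cite{htt}, since $\R^n$ is paracompact of covering dimension $n$). But your justification for $\Shi\left(\mathcal{O}p\left(\R^n\right)'\right)$ does not apply: that criterion concerns sheaves on a paracompact topological space (or an $\i$-topos already known to be locally of finite homotopy dimension), whereas $\Shi\left(\mathcal{O}p\left(\R^n\right)'\right)$ is a priori just sheaves on an abstract $1$-site, and the evident way to see that it is hypercomplete is to already know it is equivalent to $\Shi\left(\R^n\right)$ --- the statement being proved. The circularity is sharper than it looks in the context of this paper: by your own first two steps, $\Shi\left(\mathcal{O}p\left(\R^n\right)'\right) \simeq \Shi\left(\Emb\right)/ay\left(\RR^n\right)$, and hypercompleteness of $\Shi\left(\Emb\right)$ and its slices is exactly what the paper derives \emph{downstream} of Lemma \ref{lem:slicern}, via Lemma \ref{lem:locfincov} and Corollary 7.2.1.12 of \cite{htt} in the proof of Theorem \ref{thm:embeql}. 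As written, your argument establishes only that the hypercompletion of the slice $\i$-topos is $\Shi\left(\R^n\right)$.

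This is why the paper's proof of the third step looks so different: it uses no hypercompleteness input at all. It constructs the adjunction $l_! \dashv l^*$ induced by the inclusion $l:\mathcal{O}p\left(\R^n\right)' \hookrightarrow \mathcal{O}p\left(\R^n\right)$, shows that $l^*$ admits a further right adjoint $l_*$ (this is where the explicit computation with \v{C}ech nerves, subobjects, and good open covers enters --- it guarantees $l^*$ preserves colimits), and then checks that the unit and counit are equivalences on representables, which suffices by Theorem \ref{thm:5.1.5.6} because all the functors involved preserve colimits and every sheaf is a colimit of representables. You can repair your proof either by substituting this direct adjoint-equivalence argument for your third step, or by replacing the hypercompleteness claim with a $1$-localicness argument: both $\i$-topoi are $\i$-sheaves on $1$-sites, hence topological localizations of presheaf $\i$-topoi on $1$-categories, hence $1$-localic (see \S 6.4.5 of \cite{htt}), and $1$-localic $\i$-topoi are determined up to equivalence by their underlying $1$-topoi of $0$-truncated objects; the set-level Comparison Lemma equivalence then upgrades to the desired equivalence of $\i$-topoi with no hypercompleteness hypothesis.
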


\begin{proof}
Denote by $\mathcal{O}p\left(\R^n\right)$ the poset of open subsets of $\R^n$ and denote by $\mathcal{O}p\left(\R^n\right)'$ the full subcategory on those open subsets that are abstractly diffeomorphic to $\R^n.$ We claim that the canonical inclusion $$l:\mathcal{O}p\left(\R^n\right)' \hookrightarrow \mathcal{O}p\left(\R^n\right)$$ induces an equivalence $$\Shi\left(\R^n\right):=\Shi\left(\mathcal{O}p\left(\R^n\right)\right) \to \Shi\left(\mathcal{O}p\left(\R^n\right)'\right),$$ where we endow $\mathcal{O}p\left(\R^n\right)'$ with the Grothendieck topology generated by good open covers. To see this, consider the left Kan extension
$$\Lan_y\left( ay\circ l\right):\Pshi\left(\mathcal{O}p\left(\R^n\right)'\right) \to \Shi\left(\mathcal{O}p\left(\R^n\right)\right).$$ It has a right adjoint $l^*$ given by composition with $l$. Notice that $l^*$ lands in sheaves since $l$ preserves covers. Hence, there is an induced adjunction
$$\xymatrix{\Shi\left(\mathcal{O}p\left(\R^n\right)'\right) \ar@<-0.5ex>[r]_-{l_!} & \Shi\left(\mathcal{O}p\left(\R^n\right)\right) \ar@<-0.5ex>[l]_-{l^*}}$$ with $l_! \dashv l^*$. We claim that $l^*$ also has a right adjoint $l_*$. By the Yoneda lemma, if there existed such an adjoint, it would have to satisfy
\begin{eqnarray*}
 l_*F\left(U\right) &\simeq&  \Hom\left(y\left(U\right),l_*F\right)\\
&\simeq& \Hom\left(l^*y\left(U\right),F\right),
\end{eqnarray*}
so such an adjoint exists if and only if each such $l_*F$ is an $\i$-sheaf. To this end, it suffices to show that if $\left(U_i \le U\right)$ is a cover of $U$ then
$$l^*y\left(U\right) \simeq \colim \left[ \ldots \coprod l^*y\left(U_{ijk}\right) \rrrarrow \coprod l^*y\left(U_{ij}\right) \rrarrow \coprod l^*y\left(U_i\right)\right].$$ Since both $y$ and $l^*$ preserve finite limits, we have that the simplicial diagram
$$ \ldots \coprod l^*y\left(U_{ijk}\right) \rrrarrow \coprod l^*y\left(U_{ij}\right) \rrarrow \coprod l^*y\left(U_i\right)$$ is the \v{C}ech nerve of the morphism $$\coprod l^*y\left(U_i\right) \to l^*y\left(U\right),$$ and hence the above colimit can be identified with a subobject of $l^*y\left(U\right)$- which itself is a $\left(-1\right)$-truncated object. We can therefore compute this colimit in sheaves of sets. This amounts to computing it first in presheaves, and then sheafifying. Denote this presheaf obtained in this way by $G_U.$ For $V \in \mathcal{O}p\left(\R^n\right)',$ we have by general considerations that $G_U\left(V\right)$ is the subset of $\Hom_{\mathcal{O}p\left(\R^n\right)}\left(V,U\right)$ on those morphisms $f:V \to U$ which factor through some $U_i.$ Since we are in a poset, there is only a morphism $V \to U$ if $V \subseteq U$ and if so it is unique, so we have that $G_U\left(V\right)$ is the singleton set if there exists an $i$ such that $V \subseteq U_i.$ The sheafification of $G_U$ with respect to good open covers is easily seen to be $l^*y\left(U\right)$. It follows that $l_*F$ is always a sheaf, and we deduce that $l^*$ has a right adjoint, hence preserves all colimits.

Now, if $V$ is in $\mathcal{O}p\left(\R^n\right)'$, then we have a canonical equivalence $$l^*l_!y\left(V\right) \simeq y\left(V\right).$$ Now suppose that $U$ is in $\mathcal{O}p\left(\R^n\right).$ Then we can choose a good open cover $\left(U_i \le U\right)$ and write 
$$y\left(U\right) = \colim \left[ \ldots \coprod y\left(U_{ijk}\right) \rrrarrow \coprod y\left(U_{ij}\right) \rrarrow \coprod y\left(U_i\right)\right],$$ and since both $l^*$ and $l_!$ preserve colimits, together with the fact that each $U_i$ are in $\mathcal{O}p\left(\R^n\right)'$ we have that
\begin{eqnarray*}
l_!l^*\left(y\left(U\right)\right) &\simeq& \colim \left[ \ldots \coprod l_!l^*y\left(U_{ijk}\right) \rrrarrow \coprod l_!l^*y\left(U_{ij}\right) \rrarrow \coprod l_!l^*y\left(U_i\right)\right]\\
&\simeq& \colim \left[ \ldots \coprod y\left(U_{ijk}\right) \rrrarrow \coprod y\left(U_{ij}\right) \rrarrow \coprod y\left(U_i\right)\right]\\
&\simeq& y\left(U\right).
\end{eqnarray*}
It follows that the components of the co-unit $$l_!l^* \to id$$ and the unit $$id \to l^*l_!$$ along representables are equivalences, and hence by Theorem \ref{thm:5.1.5.6}, the co-unit and unit are equivalences, and hence $l_! \dashv l^*$ is an adjoint equivalence.

By Proposition 2.2.1 of \cite{higherme}, we have that $$\Shi\left(\Emb\right)/ay\left(\RR^n\right)\simeq \Shi\left(\Emb/\RR^n\right).$$ There is a canonical fully faithful functor $$\Emb/\RR^n\hookrightarrow \mathcal{O}p\left(\R^n\right)' $$ sending an embedding $\varphi:\R^n \hookrightarrow \R^n$ to the open subset $\varphi\left(\R^n\right).$ If $U \subseteq \R^n$ is an object of $\mathcal{O}p\left(\R^n\right)'$, then its canonical inclusion $U \hookrightarrow \R^n$ is an object of $\Emb/\RR^n$, hence the above functor is also surjective on objects, and hence an equivalence. Therefore we have that $$\Shi\left(\Emb/\RR^n\right)\simeq \Shi\left(\mathcal{O}p\left(\R^n\right)'\right) \simeq \Shi\left(\R^n\right).$$
\end{proof}

\begin{lem}\label{lem:locfincov}
The $\infty$-topos $\Shi\left(\Emb\right)$ is locally of homotopy dimension $\le n$ in the sense of Definition 7.2.1.5 of \cite{htt}.
\end{lem}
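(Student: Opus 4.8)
The plan is to exhibit a single generating object whose associated slice topos has already been identified in the previous lemma. Recall that an $\i$-topos $\mathcal{X}$ is locally of homotopy dimension $\le n$ (Definition 7.2.1.5 of \cite{htt}) provided there is a collection of objects generating $\mathcal{X}$ under colimits such that each associated slice topos is of homotopy dimension $\le n$. Since $\Emb$ is a one-object category, its unique representable presheaf $y\left(\RR^n\right)$ sheafifies to the single object $ay\left(\RR^n\right)$, and the first thing I would record is that this object alone generates $\Shi\left(\Emb\right)$ under colimits: every presheaf on $\Emb$ is the canonical colimit of representables indexed by its category of elements, and since $\i$-sheafification $a$ is a left adjoint and hence preserves colimits, every $\i$-sheaf is a colimit of copies of $ay\left(\RR^n\right)$. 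Thus the singleton $\left\{ay\left(\RR^n\right)\right\}$ is an admissible generating family for the purposes of the definition.

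It then suffices to verify that the slice $\Shi\left(\Emb\right)/ay\left(\RR^n\right)$ has homotopy dimension $\le n$. This is exactly where Lemma \ref{lem:slicern} does the work: it supplies a canonical equivalence
$$\Shi\left(\Emb\right)/ay\left(\RR^n\right) \simeq \Shi\left(\RR^n\right),$$
so the claim reduces to showing that the $\i$-topos of $\i$-sheaves on the topological space $\RR^n$ is of homotopy dimension $\le n$.

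For this last step I would invoke the comparison between homotopy dimension and covering dimension for sheaves on paracompact spaces, namely Theorem 7.2.3.6 of \cite{htt}: if $X$ is a paracompact topological space of covering dimension $\le n$, then $\Shi\left(X\right)$ has homotopy dimension $\le n$. Since $\RR^n$ is paracompact and has covering dimension $n$, this yields that $\Shi\left(\RR^n\right)\simeq \Shi\left(\Emb\right)/ay\left(\RR^n\right)$ is of homotopy dimension $\le n$, and therefore that $\Shi\left(\Emb\right)$ is locally of homotopy dimension $\le n$, as desired.

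There is essentially no new content here beyond what is already packaged into Lemma \ref{lem:slicern}; the two points requiring a moment's care are confirming that a single object genuinely suffices to generate under colimits (which holds precisely because $\Emb$ has a single object) and matching the convention for the covering dimension of $\RR^n$ used in \cite{htt} with the value $n$. Everything else is a direct citation, so I do not anticipate a serious obstacle.
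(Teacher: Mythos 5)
Your proof is correct and follows essentially the same route as the paper: identify $ay\left(\RR^n\right)$ as a colimit-generating object, apply Lemma \ref{lem:slicern} to identify the slice with $\Shi\left(\RR^n\right)$, and conclude via Theorem 7.2.3.6 of \cite{htt}. The only cosmetic difference is that you justify generation under colimits explicitly (via the canonical colimit of representables and colimit-preservation of sheafification) where the paper simply cites that the image of $a \circ y$ is strongly generating; both are standard and valid.
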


\begin{proof}
The image of the composite $$\Emb \stackrel{y}{\hookrightarrow} \Pshi\left(\Emb\right) \stackrel{a}{\longrightarrow} \Shi\left(\Emb\right)$$ is strongly generating, so we conclude in particular that $\Shi\left(\Emb\right)$ is generated under colimits by $ay\left(\RR^n\right).$ It therefore suffices to show that the slice $\infty$-topos $$\Shi\left(\Emb\right)/ay\left(\RR^n\right)$$ is of homotopy dimension $\le n$. However, by Lemma \ref{lem:slicern}, one has $$\Shi\left(\Emb\right)/ay\left(\RR^n\right) \simeq \Shi\left(\R^n\right)$$ and since $\R^n$ is a paracompact space of topological covering dimension $\le n,$ we are done by Theorem 7.2.3.6 of \cite{htt}.
\end{proof}

\begin{thm}\label{thm:embeql}
The canonical functor $$\alpha^*:\Pshi\left(\nMfd^{\et}\right) \to \Pshi\left(\Emb\right)$$ restricts to an equivalence of $\i$-categories
$$\alpha^*:\Shi\left(\nMfd^{\et}\right) \to \Shi\left(\Emb\right).$$
\end{thm}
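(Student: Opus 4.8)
The plan is to prove that $\alpha^*$ is both fully faithful and essentially surjective, exploiting the fact that both $\i$-topoi are generated under colimits by their respective copies of $\RR^n$.

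First I would record the two exactness properties of $\alpha^*$ that drive everything. Since $\alpha$ sends a jointly surjective family of self-embeddings of $\RR^n$ to a jointly surjective family of local diffeomorphisms — that is, $\alpha$ is continuous — the restriction functor $\alpha^* = (-)\circ\alpha^{op}$ carries $\i$-sheaves to $\i$-sheaves, so it genuinely restricts to $\alpha^*\colon\Shi\left(\nMfd^{\et}\right)\to\Shi\left(\Emb\right)$ and acquires a left adjoint $\alpha_!$ (the sheafification of the left Kan extension along $\alpha$). Moreover $\alpha$ is cocontinuous: every cover of $\RR^n$ by local diffeomorphisms is refined by a cover by embeddings, since one may shrink each chart to a ball diffeomorphic to $\RR^n$ that embeds through a member of the given cover. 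Cocontinuity forces $\alpha^*$ to commute with $\i$-sheafification, hence to preserve all colimits (equivalently, to admit a further right adjoint $\alpha_*$).

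Next I would establish full faithfulness. Every $n$-manifold $M$ is covered by charts diffeomorphic to $\RR^n$, and iterating this over the (opens of $\RR^n$ appearing as) finite intersections, the $\i$-sheaf condition of Definition \ref{dfn:isheave} (\v{C}ech descent) exhibits each representable $y^{\et}(M)$ — and hence every object of $\Shi\left(\nMfd^{\et}\right)$ — as lying in the colimit closure of the single sheaf $y^{\et}(\RR^n)$. Because $\alpha^*$ preserves colimits (Step one), both $\Hom(-,Y)$ and $\Hom(\alpha^*-,\alpha^*Y)$ send colimits to limits, so it suffices to check that
$$\Hom\left(y^{\et}(\RR^n),Y\right)\longrightarrow\Hom\left(\alpha^*y^{\et}(\RR^n),\alpha^*Y\right)$$
is an equivalence for every $Y$. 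By Remark \ref{rmk:notsubcan} one has $\alpha^*y^{\et}(\RR^n)\simeq ay(\RR^n)$, so both sides compute to $Y(\RR^n)\simeq(\alpha^*Y)(\RR^n)$ via the $\i$-Yoneda lemma and the sheafification adjunction, and the comparison map is the identity. Thus $\alpha^*$ is fully faithful.

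Finally, essential surjectivity follows formally: a fully faithful, colimit-preserving functor has essential image closed under colimits, and this image contains $\alpha^*y^{\et}(\RR^n)\simeq ay(\RR^n)$, which by Lemma \ref{lem:locfincov} generates $\Shi\left(\Emb\right)$ under colimits. Hence $\alpha^*$ is essentially surjective, and therefore an equivalence with inverse $\alpha_!$. The main obstacle I anticipate lies in Step one — verifying cleanly that the cover-lifting (cocontinuity) property of $\alpha$ forces $\alpha^*$ to commute with $\i$-sheafification and hence preserve colimits — since both the full-faithfulness reduction and the final colimit-closure argument rely on it; the mapping-space computation itself is transparent, and the generation input is exactly Lemma \ref{lem:locfincov}, whose proof is where the geometry of $\RR^n$ (through Lemma \ref{lem:slicern} and paracompactness) really enters.
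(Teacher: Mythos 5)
Your architecture (reduce to the single generator, compute mapping spaces out of it via Remark \ref{rmk:notsubcan}, then close up under colimits) is genuinely different from the paper's proof, which never argues with generators: the paper first establishes the equivalence for sheaves of \emph{sets} via a generalized Comparison Lemma, whose key hypothesis (``locally full'') is verified by the same shrink-to-a-ball geometry you use for cocontinuity, and then transports this to $\i$-sheaves using hypercompleteness of both sides (Lemmas \ref{lem:slicern} and \ref{lem:locfincov}, Corollary 7.2.1.12 of \cite{htt}, and Jardine's theorem). Your Steps two and three are essentially sound, modulo small repairs: the colimit-generation of $\Shi\left(\nMfd^{\et}\right)$ by $y^{\et}\left(\RR^n\right)$ needs \emph{good} covers, so that all finite intersections appearing in the \v{C}ech nerves are again diffeomorphic to $\RR^n$ (naive iteration over arbitrary intersections does not terminate), and the generation statement for $\Shi\left(\Emb\right)$ lives in the \emph{proof} of Lemma \ref{lem:locfincov}, not its statement.

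The genuine gap is Step one, and it is load-bearing for everything else. In $1$-topos theory, ``cover-preserving implies restriction preserves sheaves'' and the cocontinuity statements are classical; in the $\i$-categorical setting with \v{C}ech descent they are not formal. Concretely, for $\alpha^*G$ to satisfy descent against a covering sieve $S$ of $\Emb$, one needs the canonical map $\Lan_\alpha\left(S\right) \to y^{\et}\left(\RR^n\right)$ to be inverted by $\i$-sheafification, where $\Lan_\alpha\left(S\right) \simeq \colim_{\varphi \in S}\, y^{\et}\left(\RR^n\right)$ is a homotopy colimit of representables indexed by the sieve, viewed as a preorder of embeddings. Unlike its $1$-categorical shadow, this object can carry higher homotopy: its stalk at $x \in \RR^n$ is the classifying space of the category of embeddings in $S$ whose image contains $x$. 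Killing that homotopy is not a consequence of cover-lifting; one must prove these index categories are cofiltered (hence weakly contractible) and then detect the resulting stalkwise equivalence, which requires hypercompleteness of $\Shi\left(\RR^n\right)$, i.e.\ exactly the finite homotopy-dimension input of Lemmas \ref{lem:slicern} and \ref{lem:locfincov}. (By contrast, the half of your Step one asserting that $a \circ \alpha^*$ inverts local equivalences \emph{is} formal from cover-lifting: $\alpha^*$ of a covering-sieve inclusion is a locally surjective monomorphism, and one invokes strong saturation.) Nor can you dodge this by replacing $\alpha^*$ with $a\circ\alpha^*$ throughout: your full-faithfulness computation needs $\left(\alpha^*G\right)\left(\RR^n\right) \simeq \left(a\alpha^*G\right)\left(\RR^n\right)$, which is the same assertion again. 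So the geometry and dimension theory do not enter only through Lemma \ref{lem:locfincov} at the end, as you suggest; they are needed to get Step one off the ground --- this is precisely why the paper takes the Comparison Lemma/hypercompleteness d\'etour, and why its remark after Theorem \ref{thm:main} distinguishes the (easy) hypersheaf statement from the (harder) \v{C}ech statement that Theorem \ref{thm:embeql} actually makes. Your proof becomes correct once Step one is replaced by such an argument, but as written it treats as routine the one point where the real content lies.
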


\begin{proof}
First observe that $\alpha$ clearly preserves covers. Hence $\alpha^*$ restricts to a functor between the respective subcategories of $\i$-sheaves. Second, by the Comparison Lemma of \cite{pres} (p. 151), which is a slight generalization of the lemma by the same name in \cite{SGA}, we can deduce that the induced functor between sheaves with values in $\Set$ $$\alpha^*:\Sh\left(\nMfd^{\et}\right) \to \Sh\left(\Emb\right)$$ is an equivalence of categories. The only non-trivial property of the $1)$-$5)$ listed in loc. cit. is $2)$- the locally full property. To verify this property, we must show that given any local diffeomorphism $$f:\RR^n \to \RR^n,$$ there exists a cover by self-embeddings $$\varphi_i:\RR^n \to \RR^n$$ such that each composite $f \circ \varphi_i$ is an embedding. Clearly, given a local diffeomorphism $f$ as above, there exists an open cover $$\left(U_\alpha \hookrightarrow \RR^n\right),$$ such that for all $\alpha,$ $f|_{U_\alpha}$ is an embedding. Refining the open cover if necessary, we may assume that the $U_\alpha$ is a coordinate patch diffeomorphic to $\RR^n,$ hence establishing our claim.

Now by Remark 6.1.2 of \cite{higherme} together with Theorem 5.3.6 of the same article, it follows that $\Shi\left(\nMfd^{\et}\right)$ is hypercomplete. By Lemma \ref{lem:locfincov} together with Corollary 7.2.1.12 of \cite{htt}, it follows that $\Shi\left(\Emb\right)$ is as well. The result now follows since the above paragraph implies an equivalence of categories $$\Sh\left(\nMfd^{\et}\right)^{\Delta^{op}} \simeq \Sh\left(\Emb\right)^{\Delta^{op}}$$ between simplicial sheaves, so we can now invoke Theorem 5 of \cite{Jardine} and Proposition 6.5.2.14 of \cite{htt}.
\end{proof}

\bibliography{meanet}
\bibliographystyle{hplain}

\end{document}